\DeclareMathOperator{\Tr}{Tr}
\begin{document}

\title{Efficient pseudometrics for data-driven comparisons of nonlinear dynamical systems
%\title{Analysis of morphological computation using data-driven pseudometrics and semimetrics for dynamical systems  %\thanks{Grants or other notes
%about the article that should go on the front page should be
%placed here. General acknowledgments should be placed at the end of the article.}
}
%\subtitle{Do you have a subtitle?\\ If so, write it here}

%\titlerunning{Short form of title}        % if too long for running head

\author{Bryan Glaz        % \and
        %Second Author %etc.
}

%\authorrunning{Short form of author list} % if too long for running head

\institute{B.  Glaz \at
              Weapons Sciences Division,  DEVCOM Army Research Laboratory \\
             Aberdeen Proving Ground,  USA\\
              \email{bryan.j.glaz.civ@army.mil}           %  \\
%             \emph{Present address:} of F. Author  %  if needed
%           \and
%           S. Author \at
%              second address
}

\date{Received: date / Accepted: date}
% The correct dates will be entered by the editor

\maketitle

\begin{abstract}
Computationally efficient solutions for pseudometrics quantifying deviation from topological conjugacy between dynamical systems are presented. Deviation from conjugacy is quantified in a Pareto optimal sense that accounts for spectral properties of Koopman operators as well as trajectory geometry. Theoretical justification is provided for computing such pseudometrics in Koopman eigenfunction space rather than observable space.  Furthermore, it is shown that theoretical consistency with topological conjugacy can be maintained when restricting the search for optimal transformations between systems to the unitary group. Therefore the pseudometrics are based on analytical solutions for unitary transformations in Koopman eigenfunction space. Geometric considerations for the deviation from conjugacy Pareto optimality problem are used to develop scalar pseudometrics that account for all possible optimal solutions given just two Pareto points. The approach is demonstrated on two example problems; the first being a simple benchmarking problem and the second an engineering example comparing the dynamics of morphological computation of biological nonlinear muscle actuators to simplified mad-made (including bioinspired) approaches.  The benefits of considering operator and trajectory geometry based dissimilarity measures in a unified and consistent formalism is demonstrated. Overall, the deviation from conjugacy pseudometrics provide practical advantages in terms of efficiency and scalability, while maintaining theoretical consistency.
\keywords{Data-driven dynamics \and Topological conjugacy \and Dissimilarity measures \and Koopman eigenfunctions}
% \PACS{PACS code1 \and PACS code2 \and more}
\subclass{37M10 \and 37C15 \and 15B10 \and 15B57 }
\end{abstract}

\section{Introduction}
\label{intro}

Data-driven approaches are becoming increasingly prevalent for modeling, predicting, controlling, and designing complex nonlinear dynamical systems across a variety of scientific and technical fields \cite{brunton_book}. Data-driven approaches are especially effective for analyzing experimental observations of complex dynamical systems for which no accurate equations are available,  or analyses for which knowledge of empirical or first-principles governing nonlinear equations must be interrogated in order to understand the system's behavior. Quantitative comparisons (dissimilarity measures) between nonlinear dynamical systems based on data are particularly important for a variety of machine learning and optimization tasks.  As a result, it is desirable for data-driven dissimilarity measures to (a) be computationally efficient such that the approach is scalable to analyzing multiple combinations of systems \cite{williams_shape,lds_distance_opt_algorithm} and (b) respect the inherent features of the dynamical system that generated the data \cite{bollt_conj}.  The first attribute implies an `efficient' quantitative measure where `efficient' is widely accepted to mean it can be computed in polynomial time or better \cite{comp_complexity}.  The second desired attribute requires that the approach be anchored to an appropriate equivalence relationship for nonlinear dynamical systems so that systems that are equivalent return zero dissimilarity when compared.  Topological conjugacy \cite{wiggins}, which provides conditions on both trajectories and operator level dissimilarity, is the equivalence relationship explored here. 

There has been extensive work in comparing linear dynamical systems from both a trajectory geometry and operator perspective.  Efficient kernel based approaches were developed in \cite{bc_kernel,kernel_ext}. These approaches quantify dissimilarity based on trajectory geometry and follow the behavioral framework perspective to identifying dynamical systems \cite{behavior_ds}.  While the focus was on linear dynamical systems, an extension to nonlinear systems was briefly discussed in \cite{bc_kernel}.  The nonlinear systems formulation from \cite{bc_kernel} closely resembles a Koopman operator \cite{applied_koop} perspective. However, system or operator theoretic dissimilarity measures were not considered.  A systems oriented perspective to computing `distance' between linear dynamical systems was later taken in \cite{6247929,lds_distance_opt_algorithm}. It was recognized that restricting to the orthogonal (maximally compact) subgroup of real invertible matrices could reduce the computational expense of solving for optimal alignments between systems.  Topological conjugacy considerations were not explored in \cite{6247929,lds_distance_opt_algorithm}. Furthermore, generalizations to complex transformations and the relevance to dynamical systems was not studied.

Another trajectory focused approach is generalized shape metrics \cite{williams_shape}, which focused on scalability and computational efficiency.  It was also shown how generalized shape metrics relate to (and are often equivalent to) other statistical shape approaches such as canonical correlation analysis \cite{cca} and representational similarity analysis \cite{rsa}.  Similar to \cite{6247929,lds_distance_opt_algorithm}, the approach is based on the assumption of orthogonal transformations between data sets.  By restricting to orthogonal transformations it was shown in \cite{williams_shape} that the resulting pseudometrics for comparing systems satisfy subadditivity (i.e. triange inequality) \cite{metrics}.  This was considered an important feature for large scale machine learning applications where inability to satisfy the triangle inequality can lead to inconsitent results. Furthermore, the alignment transformations are solved using the orthogonal one-sided Procrustes solution \cite{schonemann} which can be computed in polynomial (cubic) time.  However, the generalized shape metric approach does not account for the inherent properties of the underlying dynamical system operator.

Beginning from the perspective that topological conjugacy is the foremost equivalence relationship in dynamical systems,   the homeomorphic defect approach to comparing nearly conjugate dynamical systems was introduced in \cite{bollt_conj}. The homeomorphic defect approach assumes one has found a transformation between two systems such that the trajectories match. This transformation function was referred to as a `commuter.' If the commuter function is a homeomorphism, then the systems are topologically conjugate. Therefore, dissimilarity was quantified according to the `defects' in the commuter function that prevented it from being a homeomorphism.  Scalable computation of the commuter functions from a data-driven context, as well as quantifying dissimilarity when a commuter function cannot be found were not explored.  

Koopman operator theory provides an attractive avenue for data-driven dissimilarity quantification between nonlinear dynamical systems since it has been shown to be well suited to capturing topological conjugacy \cite{applied_koop,mezic_koop_spectrum}. Theoretical development of pseudometrics based on Koopman operators was first explored in \cite{mezicandbanaszuk:2004} and \cite{MEZIC2016454} where it was shown that harmonic averages over a single observable function composed with a complete set of periodic functions could be used to quantify dissimilarity between systems.  This approach is especially well suited to experimental situations in which one may have access to limited measurements. More recently,  Koopman operator level attributes were considered in \cite{redman2022,redman2024} where the Wasserstein distance on the Koopman eigenvalues was used to quantify dissimilarity.  This approach was theoretically based on the insight that if two systems are topologically conjugate, then they will have the same Koopman eigenvalues and their Koopman eigenfunctions will be equivalent under transformation \cite{applied_koop}. The approach in \cite{redman2022,redman2024} was effective at identifying equivalence from data even when two systems are topologically conjugate under a nonlinear transformation. Applicability for comparisons of systems with unequal dimensionality was also discussed. Furthermore, computing the Wasserstein distance is solvable in polynomial time since it only requires solution to the linear assignment problem for a permutation matrix that optimally aligns the Koopman eigenvalues between two systems.  The question of how optimal alignment from a Wasserstein permutation perspective affects trajectory dissimilarity (and vice versa) remains open, particularly away from conjugacy.

Koopman operators served as the foundation for development of dynamic similarity analysis (DSA)\cite{ostrow_DSA}, which was introduced as an extension to generalized shape metrics. The focus of DSA was on accounting for system level, rather than trajectory level, dissimilarity while maintaining the advantages of upholding the triangle inequality. As a result, the analysis of \cite{williams_shape} was adapted to the two-sided Procrustes problem in \cite{ostrow_DSA} where it  was shown that orthogonal (or special orthogonal) transformations guarantee satisfaction of the triangle inequality. However, because the approach requires solution to a non-convex optimization problem, it is not solvable in polynomial time even if using fast optimization algorithms such as the one described in \cite{lds_distance_opt_algorithm}. It was shown in  \cite{ostrow_DSA} that DSA recovers the pseudometric from \cite{redman2022,redman2024} if the data-driven Koopman operators are normal. Otherwise,  as in \cite{6247929,lds_distance_opt_algorithm}, DSA cannot be guaranteed to recover zero dissimilarity for topologically conjugate systems in general due to the assumption of orthogonal transformations in observable space. 

Therefore the main contributions presented here are to develop computationally efficient pseudometrics that are theoretically consistent with topological conjugacy while accounting for implications of being away from conjugacy.  The approach is based on data-driven Koopman operators and accounts for both operator and trajectory dissimilarity measures within a unified framework. This is done by formulating \emph{deviation from conjugacy} pseudometrics based on solutions to a Pareto optimality problem that accounts for both Koopman eigenvalues and Koopman eigenfunction dissimilarity measures.  It is shown that unitary transformations of Koopman eigenfunctions and Koopman eigenvalues are needed to capture topological conjugacy, and that computationally efficient analytical solutions to the Pareto optimality problem can be obtained when considering such transformations.

Section \ref{sec_gp_koopj} provides a brief description of Koopman operator theory that serves as the foundation for the data-driven approach. Sections \ref{subsec_conj} and \ref{sec_metrics} contain the core novel contributions related to theoretical development of the deviation from conjugacy pseudometric, as well as analytical solutions that can be solved in polynomial (cubic) time.  Results are presented in Section \ref{results} illustrating the effectiveness of the approach on a simple benchmarking problem as well as an engineering example. Concluding remarks are provided in Section \ref{conc}.

\section{Data-Driven Koopman Operator Approximation}
\label{sec_gp_koopj}

In the pseudometric development in the next sections, it is required that one has constructed data-driven approximations to the Koopman operators for the dynamical systems that are to be compared.   A brief description of Koopman operator theory along with implementation choices relevant to the results presented in Section \ref{results} are provided. The reader is referred to  \cite{applied_koop,mezic_koop_spectrum} for detailed discussions of Koopman operator theory. 

For a nonlinear dynamical system in which data is obtained in discrete time, one has
\begin{equation}
\label{nlsys}
x_{n+1}=f(x_n), x \in \mathbb{C}^N, f: x \rightarrow x,
\end{equation}
with iteration number (time-step) given by $n$ and a potentially infinite dimensional vector of nonlinear observable functions $\Psi_n(x_n):  x \rightarrow \mathbb{C}^{\infty}$, the Koopman operator $\mathcal{K}$ evolves the observables forward according to
\begin{equation}
\label{koop_exact}
\Psi_{n+1}=\mathcal{K} \circ \Psi_n.
\end{equation}

Since $\mathcal{K}$ cannot be exactly derived in general and $\Psi$ may be infinite dimensional, data driven approximations $K \in \mathbb{C}^{N_{\Psi} \times N_{\Psi}}$ can be computed on a finite set of observable functions $\Psi_n(x_n):  x \rightarrow \mathbb{C}^{N_{\Psi}}$. Dynamic mode decomposition(DMD) and extended dynamic mode decomposition(EDMD) are commonly used approaches\cite{dmd,edmd,rowleyetal:2009} to identify $K$ from observable vector data.  Given the data-driven Koopman operator approximation, the observables evolve as
\begin{equation}
\label{koop_approx}
\Psi_{n+1} \approx K  \Psi_n, n=1 \ldots N_{\Delta t},
\end{equation}
Furthermore, the Koopman eigenfunctions, $\Phi(x)$, can be expanded in terms of the finite set of Koopman observable functions such such that
\begin{equation}
\label{koop_approx_efs}
\Phi(x) \approx W \Psi(x), \; \Phi \in \mathbb{C}^{N_{\Psi} \times N_{\Delta t}}
\end{equation}
where $W$ is a matrix in which each row corresponds to an eigenfunction associated with one of the Koopman eigenvaues. The matrix $W$ is computed by taking the transpose of the left-eigenvectors of $K$ \cite{dmd,edmd}. It is assumed here that $W$ is invertible and $K$ is diagonalizable.

\subsection{Observable vector}
\label{koop_obs}
The Koopman observable functions are chosen by the user. Typically, they consist of functions of physical or application relevance, potentially including system states. Assuming the data is in discretized form, the Koopman observable vector used in this study is $\Psi_n = \left[\begin{matrix} 1 & \Psi_{\mathrm{primary},n}^T & \Psi_{\mathrm{aux},n} ^T \end{matrix}\right]$.  The primary observables $\Psi_{\mathrm{primary},n}$ denoting quantities of interest are chosen here to be the system states, control inputs $u$, and certain nonlinear observable functions $\eta(x,u)$,
\begin{equation}
\label{psi_prime}
\Psi_\mathrm{primary}^T = \left[\begin{matrix} x_1 & \ldots & x_N & u_1 & \ldots & u_{N_u} &  \eta_1 & \ldots & \eta_{N_\eta}\end{matrix}\right].
\end{equation}

Similar to EDMD, the auxiliary observables are used to accurately approximate the nonlinear behavior in the primary observables.  Thus they are a rich set of basis functions for capturing nonlinear behavior. They must be included in identification of the Koopman operator, but may not be of inherent interest to a user. The auxiliary Koopman observables used here are based on the correlation functions given by
\begin{equation}
\label{psi_aux}
\rho(i,j)= \prod_{k=1}^4 \max \left[0,1 -  \theta_k \vert \Psi_{\mathrm{primary},i}^{(k)} - \Psi_{\mathrm{primary},j}^{(k)}\vert \right],
\end{equation} 
where the $(i,j)$ indices denote the discretized time step in the data and the index $k$ denotes the component of the $\Psi_{\mathrm{primary}}$ vector.  These correlation functions were chosen because they were found to provide accurate approximations while avoiding ill-conditioned correlation matrices.  The choice of correlation function is user defined and a variety of options exist in the machine learning literature. The vector of auxiliary Koopman observables for time step $n$ is \\$\Psi_{\mathrm{aux},n} = \left[\begin{matrix} \rho(n,1) & \rho(n,2) & \ldots & \rho(n,N_{\Delta t}) \end{matrix}\right]^T$.  The parameters $\theta_k$ were found by training on data for time steps $> N_{\Delta t}$ to ensure that the identified Koopman models accurately predict for long periods of time.  Finally, the constant 1 is used for the first component in $\Psi$ to account for constant trends (e.g. mean values). From Eqs. \ref{psi_prime} and \ref{psi_aux}, $N_{\Psi} = 1+N+N_u+N_{g}+N_{\Delta t}$.

\subsection{Identification of the Koopman operator}

Given $\Psi_{n+1}$ and $\Psi_{n}$, $K$ is typically identified by the least squares approach \cite{dmd,edmd}.
%\begin{equation}
%K = \Psi_{n+1} \Psi_{n}^{\dagger},
%\end{equation}
Alternativey one could take a Gaussian process machine learning perspective to identify $K$ from the data \cite{NIPS2007_66368270,GP_ML_book}.  For example, see \cite{masuda2019,LIAN2020449} for applications to to identification of Koopman operators. The Gaussian process interpolation approach taken here results in each row of $K$ being a generalized least squares solution to the data, often referred to as a kriging model \cite{jon01}.  Therefore, the predictors for $\Psi_{n+1}$ as functions of $\Psi_{n}$ are the best linear unbiased predictors \cite{jon01}.  It was found for the engineering example presented in Section \ref{results} that the kriging and DMD/EDMD approaches led to similar accuracy but the Gaussian process based Koopman operators had lower norms which can be useful for stability of long term predictions \cite{stable_koop}. Thus the generalized least squares/kriging interpolation approach was used in this study to identify $K$ from $\Psi_{n+1}$ and $\Psi_{n}$ data.  The theory and analytical solutions described next in Sections \ref{subsec_conj} and \ref{sec_metrics} apply regardless of one's preferred approach for identifying $K$. 

\section{Topological Conjugacy from the Perspective of Koopman Operators}
\label{subsec_conj}

The conceptual foundation for comparing/classifying systems is that pseudometrics should return zero when two systems are equivalent. For dynamical systems, topological conjugacy serves as the theoretical foundation for equivalence \cite{wiggins}.  If two dynamical systems are topologically conjugate to another, then they are equivalent.  Building off the notion of equivalence, one can quantify dissimilarity between dynamical systems by their deviation from conjugacy, i.e.  if the deviation from conjugacy between system $a$ and $b$ is less than the deviation between system $b$ and $c$, then among systems $a$ and $c$, we can quantitatively say that $a$ is more similar to $b$.  A brief description of topological conjugacy and the connection with Koopman based data-driven approaches follows. The deviation from conjugacy pseudometrics are developed in Section \ref{sec_metrics}.

\begin{definition}
Two systems $x_{n+1}=f(x_n), x \in X$ and $y_{n+1}=g(y_n), y \in Y$ are topologically conjugate if there exists a homeomorphism $h: X \rightarrow Y, $ such that \cite{wiggins}  
\begin{equation}
\label{conj}
\begin{split}
& y=h(x), \\
& h \circ f(x) = g \circ h(x).
\end{split}
\end{equation}
\end{definition}
It can be shown by using the spectral equivalence of topologically conjugate systems proposition from \cite{applied_koop},  that if $f$ and $g$ are topologically conjugate and their Koopman operators $\mathcal{K}_{f}$and $\mathcal{K}_{g}$ have discrete spectra with eigenvalues $\lambda_{f,i}, \forall i=1, \dots, N_{\Psi}$, and $\lambda_{g,i}, \forall i=1, \dots, N_{\Psi}$ respectively then 
\begin{equation}
\label{eig_conj}
 \lambda_{f,i} = \lambda_{g,i}, \forall i=1, \dots, N_{\Psi}
\end{equation}
assuming an appropriate ordering of eigenvalues.
Although exact identification of Koopman operators $\mathcal{K}_{f}$ and $\mathcal{K}_{g}$ is impractical in general, one still benefits greatly from Eq.~\ref{eig_conj}. Assuming one has identified data-driven operators $K_f$ and  $K_g$ that accurateley approximate $\mathcal{K}_{f}$ and $\mathcal{K}_{g}$ with respect to the attractors sampled by the data, then Eq.~\ref{eig_conj} implies that the eigenvalues of $K_f$ and  $K_g$ will be equivalent for topologicaly conjugate systems.

In Koopman observable space, $\Psi_{f,n+1}=K_f \Psi_{f,n}$ and \\$\Psi_{g,n+1}=K_g \Psi_{g,n}$.Building upon the spectral equivalence of $K_f$ and  $K_g$ when the two systems are topologically conjugate and taking the homeomorphism $h$ to be an invertible transformation matrix $T \in GL(N_{\Psi})$, Eq.~\ref{conj} becomes:
\begin{equation}
\label{sys_conj}
\left\Vert K_f - T^{-1} K_g T \right\Vert_F = 0,  T \in GL(N_{\Psi})
\end{equation}
\begin{equation}
\label{traj_conj}
\left\Vert \Psi_g - T \Psi_f \right\Vert_F = 0, \Psi_{f,g} \in \mathbb{C}^{N_{\Psi} \times N_{\Delta t}},
\end{equation}
where $\Vert \; \Vert_F$ is the Frobenius norm.  Note that translation can be neglected since affine transformations cannot lead to similarity transformations between two linear systems; i.e.  $K_f \ne h^{-1} K_g h$ when $h \circ \Psi_f = T\Psi_f+b$.

Equations \ref{sys_conj} and~\ref{traj_conj} represent the equivalence conditions that any pseudometric consistent with topological conjugacy should recover if two systems are indeed conjugate. However, before proceeding with the derivation of such pseudometrics, it is first beneficial to recast the conditions in Eqs. \ref{sys_conj} and~\ref{traj_conj} associated with Koopman observable space, $\Psi$-space, into equivalent conditions written in Koopman eigenfunction space, $\Phi$-space. As will be discussed next and in Section~\ref{sec_metrics}, the pseudometrics in $\Phi$-space hold several advantages compared to solving for optimal transformations based on Eqs. \ref{sys_conj} and~\ref{traj_conj}.

\begin{proposition}
\label{psi_vs_phi}
If two discrete spectra systems $f$ and $g$ are topologically conjugate then 
\begin{equation}
\label{sys_conj_eig}
\left\Vert \Lambda_f - C^{-1} \Lambda_g C \right\Vert_F = 0
\end{equation} is an equivalent condition to Eq.~\ref{sys_conj} and 
\begin{equation}
\label{traj_conj_eig}
\left\Vert \Phi_g - C \Phi_f \right\Vert_F = 0
\end{equation} 
is an equivalent condition to Eq.~\ref{traj_conj},  where
\begin{equation}
C = W_g T W^{-1}_f, C \in GL(N_{\Psi}),
\end{equation}
the diagonal matrices $\Lambda_{f,g} \in \mathbb{C}^{N_{\Psi} \times N_{\Psi}}$  contain the arbitrarily ordered eigenvalues of $K_{f,g}$, and $W_{f,g} \in GL(N_{\Psi})$ contain the left-egenvectors of $K_{f,g}$ such that the $i\mathrm{th}$ row of $W_{f,g}$ corresponds to the eigenfunction with eigenvalue in element $(i,i)$ of $\Lambda_{f,g}$.
\end{proposition}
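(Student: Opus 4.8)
The plan is to reduce both equivalences to purely algebraic identities, exploiting the eigendecomposition of the data-driven Koopman operators together with the invertibility of all the matrices involved. Since the Frobenius norm of a matrix vanishes if and only if the matrix itself is zero, each of the four conditions in the statement is equivalent to its underlying matrix equation, so I would first drop the norms and work directly with $K_f - T^{-1}K_g T$, $\Lambda_f - C^{-1}\Lambda_g C$, $\Psi_g - T\Psi_f$, and $\Phi_g - C\Phi_f$. The one structural fact I would invoke is that, because the rows of $W_{f,g}$ are the (transposed) left-eigenvectors of $K_{f,g}$ ordered to match the diagonal of $\Lambda_{f,g}$, each row satisfies $w^\top K = \lambda w^\top$; stacking these gives $W_{f,g} K_{f,g} = \Lambda_{f,g} W_{f,g}$, i.e. the eigendecomposition $K_{f,g} = W_{f,g}^{-1}\Lambda_{f,g}W_{f,g}$.

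For the operator-level equivalence I would substitute these eigendecompositions into $K_f - T^{-1}K_g T = 0$, conjugate the entire expression by $W_f$ (left-multiply by $W_f$, right-multiply by $W_f^{-1}$), and collect the factors acting on $\Lambda_g$. The middle product is exactly $W_f T^{-1}W_g^{-1}\,\Lambda_g\,W_g T W_f^{-1}$, which, under the substitution $C = W_g T W_f^{-1}$ (so that $C^{-1} = W_f T^{-1}W_g^{-1}$), collapses to $C^{-1}\Lambda_g C$ and yields $\Lambda_f - C^{-1}\Lambda_g C = 0$, i.e. Eq.~\ref{sys_conj_eig}. Because $W_f$, $W_g$, and $T$ are all invertible, every step is reversible, so the two conditions imply one another rather than merely one implying the other.

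For the trajectory-level equivalence I would left-multiply $\Psi_g - T\Psi_f = 0$ by $W_g$, use $\Phi_{f,g} = W_{f,g}\Psi_{f,g}$ from Eq.~\ref{koop_approx_efs} to recognize $W_g\Psi_g = \Phi_g$, and insert $W_f^{-1}W_f$ into the remaining term so that $W_g T \Psi_f = (W_g T W_f^{-1})(W_f\Psi_f) = C\Phi_f$, giving $\Phi_g - C\Phi_f = 0$, i.e. Eq.~\ref{traj_conj_eig}; invertibility of $W_g$ again makes this step reversible. No part of this is genuinely hard once the conventions are fixed — the whole argument is a change of basis by $W_{f,g}$ that transports the $\Psi$-space conditions into $\Phi$-space — so the only real care required is bookkeeping: pinning down the left-eigenvector/transpose convention precisely enough that the eigendecomposition reads $K = W^{-1}\Lambda W$ rather than its inverse-transpose variant, and verifying that $C = W_g T W_f^{-1}$ is the single substitution simultaneously linearizing both the operator and trajectory relations. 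I would also note that topological conjugacy is not actually used in establishing the equivalence, which holds algebraically for any $T \in GL(N_{\Psi})$; conjugacy enters only through Eq.~\ref{eig_conj} and the existence of the commuter $T$ to guarantee that some $T$ attaining zero in Eqs.~\ref{sys_conj}--\ref{traj_conj} exists in the first place.
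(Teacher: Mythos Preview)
Your proposal is correct and follows essentially the same route as the paper: both arguments rest on the eigendecomposition $K_{f,g} = W_{f,g}^{-1}\Lambda_{f,g}W_{f,g}$, the relation $\Phi_{f,g} = W_{f,g}\Psi_{f,g}$, and the change of variables $C = W_g T W_f^{-1}$, with the only cosmetic difference being that the paper carries the Frobenius norm through and factors out $W_f^{-1}(\cdot)W_f$ before concluding, whereas you drop the norm at the outset. Your closing observation that the algebraic equivalence holds for any $T\in GL(N_\Psi)$, with conjugacy needed only to guarantee that some $T$ achieves zero, is a sharper reading than the paper's phrasing and worth retaining.
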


\begin{proof}
Using the composition property of groups, the transformation matrix $T$ can be written as the composition of other matrices from $GL(N_{\Psi})$ without loss of generality. Since $W_f$ and $W_g$ are assumed to be invertible,
\begin{equation} 
\label{TtoC}
T = W^{-1}_g C W_f.
\end{equation}
Substituting Eq.~\ref{TtoC} into Eq.~\ref{sys_conj} and recognizing that by definition $K_{f,g} =W^{-1}_{f,g} \Lambda_{f,g} W_{f,g}$ leads to
\begin{equation}
\label{K_Lambda}
\left\Vert K_f - T^{-1} K_g T \right\Vert_F = \left\Vert W^{-1}_f\left(\Lambda_f - C^{-1} \Lambda_g C \right)W_f\right\Vert_F.
\end{equation}
Since the systems are conjugate, Eq.~\ref{K_Lambda} equals zero which can only be true if $\left\Vert \Lambda_f - C^{-1} \Lambda_g C \right\Vert_F = 0$. Thus, at conjugacy, Eq. \ref{sys_conj} and \ref{sys_conj_eig} must both be true and are equivaent conditions.

Now consider the trajectory conjugacy condition.  From Eq. \ref{koop_approx_efs}, the observables can be written in terms of Koopman eigenfunctions as 
\begin{equation}
\label{obs_to_ef}
\Psi_{f,g} = W^{-1}_{f,g} \Phi_{f,g}.
\end{equation}
Subsitituting Eqs. \ref{TtoC} and \ref{obs_to_ef} into Eq. \ref{traj_conj} directly leads to the condition given by Eq. \ref{traj_conj_eig}. Thus Eq. \ref{traj_conj} and Eq. \ref{traj_conj_eig} are equivalent conditions at conjugacy.

\end{proof}

\begin{remark}
\label{remark_psi_vs_phi}
The equivalence between topological conjugacy conditions in $\Psi$-space and $\Phi$-space established in Proposition~\ref{psi_vs_phi} provides the conceptual foundation for the development of the pseudometrics in $\Phi$-space. This allows for computing deviations from conjugacy based on $C$ rather than $T$. As will be shown, computationally efficient and theoretically consistent analytical solutions for $C$ under certain group restrictions can be found whereas this is not the case if attempting to directly solve for $T$ in $\Psi$-space.  %Therefore pseudometrics based in $\Phi$-space offer theoretical consistency and scalability advantages compared to pseudometrics in $\Psi$-space. 
\end{remark}

%The conceptual attributes of topological conjugacy and Koopman operators \cite{applied_koop}Consider two linear systems obtained by numerically approximating the Koopman operator from data, with the observables for system 1 and 2 given by $\boldsymbol\Psi^{(1)}$ and $\boldsymbol\Psi^{(2)}$ respectively.  Then in Koopman observable space $\boldsymbol\Psi^{(1)}_{n+1}=\mathbf{K}_{1} \boldsymbol\Psi^{(1)}_{n}$ and $\boldsymbol\Psi^{(2)}_{n+1}=\mathbf{K}_{2} \boldsymbol\Psi^{(2)}_n$.
%
%
%
%From \cite{applied_koop}, if two discrete time dynamical systems are topoligically conjugate then their Koopman operators have discrete spectra

\section{Deviation from Conjugacy Pseudometrics}
\label{sec_metrics}
The development of deviation from conjugacy pseudometrics for comparing dynamical systems is described in this section.  Conceptual perspective is provided first, followed by simplifications to the Pareto optimality problem that needs to be solved.  Given these simplifications, computationally efficient analytical solutions in $\Phi$-space are provided, followed by their corresponding $\Psi$-space representations.

\subsection{Deviation from conjugacy}
\label{dfc_concept}
Motivated conceptually by the homeomorphic defect approach \cite{bollt_conj},  dissimilarity between dynamical systems can be accounted for by quantifying how much the systems deviate from topological conjugacy. The perspective in \cite{bollt_conj} was based on having a ``commuter" $h$ that satisfies Eq. \ref{conj} and then quantifying dissimilarity in terms of how $h$ differs from being a homeomoprhism.  A somewhat inverse perpsective is taken here in that it is assumed that $h$ is a homeomorphism and dissimalrity is based on the error with respect to Eq. \ref{conj}.  These errors (and their $\Phi$-space instantiations), which are referred to as \emph{conjugacy residuals} here,  serve as vector components in an objective function space. Deviations from conjugacy pseudometrics are then defined based on magnitudes of these vectors.  

The conjugacy residuals follow from rewriting Eq. \ref{conj} as
\begin{equation}
\label{res_conj}
\begin{split}
& y=h(x)+ \hat{r}_1(f,g,h), \\
& h \circ f(x) = g \circ h(x) + \hat{r}_2(f,g,h).
\end{split}
\end{equation}
where residual vectors $\hat{r}_1,\hat{r}_2 \in \mathbb{C}^{N_{\Psi}}$ are zero if the systems are topologically conjugate and non-zero otherwise.  Therefore we seek to find $h$ that minimizes $\hat{r}_1$ and $\hat{r}_2$. Two immediate observations are worth noting: first, in general the $h$ that minimizes $\hat{r}_1$ can only be guaranteed to minimize $\hat{r}_2$ when the two systems are conjugate (i.e.  $\hat{r}_1=\hat{r}_2=0$ for some $h$) and second the best combinations of $\hat{r}_1$ and $\hat{r}_2$ can only be quantified in a Pareto optimal sense since minimizing $\hat{r}_1$ will not in general minimize $\hat{r}_2$ away from conjugacy.  The transformation $h$ is restricted to be in the group of invertible matrices, i.e. $h = C \in GL(N_{\Psi})$, which gaurantees that only homeomorphisms are considered.   Therefore in the data-driven context of Eqs. \ref{sys_conj_eig} and \ref{traj_conj_eig}, the problem of finding the best residuals can be treated by minimizing the two scalar valued conjugacy residuals $r_1,r_2$:
\begin{equation}
\label{mopt_r}
\begin{split}
& \min_{C \in GL(N_{\Psi})}  \left(r_1,r_2 \right), \\
& r_1 =  \left\Vert \Phi_g - C \Phi_f \right\Vert_F, \;r_2 =  \left\Vert \Lambda_f - C^{-1} \Lambda_g C \right\Vert_F. \; 
\end{split}
\end{equation}
Appropriately normalized conjugacy residuals can also be used if desired. For instance, one could use 
\begin{equation}
\label{r1scaled}
r_1 =  \left\Vert \Phi_g - C \Phi_f \right\Vert_F/\left\Vert \Phi_{ref} \right\Vert_F
\end{equation}
and 
\begin{equation}
\label{r2scaled}
r_2 = \left\Vert \Lambda_f - C^{-1} \Lambda_g C \right\Vert_F/\left\Vert \Lambda_{ref} \right\Vert_F
\end{equation}
where $\Phi_{ref}$ and $\Lambda_{ref}$ correspond to a chosen reference system.

The conjugacy residuals can be viewed as vector components in objective function space, where the origin corresponds to topologically conjugacy.  Based on these residuals, deviation from conjugacy is defined as follows:
\begin{definition}
Deviation from conjugacy, $d(r_1,r_2)$, is defined as $d = \sqrt{r_1^2 + r_2^2}$ where $(r_1,r_2)$ correspond to Pareto points of Eq. \ref{mopt_r}.   
\end{definition}

\subsection{Simplifications to the Pareto optimality problem}

%Note for now, the transformations is assumed to be in the group of invertible matrices. Restrictions on the transformation will be discussed in Section \ref{unitary_subsec}. 
Up to this point, no restrictions on group membership of $C$ have been made other than it must be invertible.  However attempting to solve the non-convex multi-objective function optimization problem given by Eq.\ref{mopt_r} will not be scalable and likely intractable. For example, consider the case where $K_f,K_g \in \mathbb{R}^{N_{\Psi} \times N_{\Psi}}, N_{\Psi}=10$. In this case, $C$ would have 100 independent variables. Therefore, even for a fairly small $N_{\Psi}$, global optimization would be quite time consuming while local optimization would rely on having good initial guesses in order to obtain the Pareto optimal solutions to Eq. \ref{mopt_r}. Clearly solving Eq. \ref{mopt_r} is not suitable for a variety of machine learning and optimization analyses that would require solutions for many combinations of dynamical systems.  It is demonstrated below that the problem can be simplified considerably by considering the transformation requirements for recognizing topological conjugacy as well as Pareto front geometry in two-dimensional objective function space.

\subsubsection{Solutions over the group of unitary transformations}
\label{unitary_subsec}

The first simplification that is considered is based on the transformation attributes that are required to capture topological conjugacy between two systems. When considering $C \in GL(N_{\Psi})$, this implies transformations that will account for rotation, reflection, and stretching.  Thus, an important question is whether all three transformation components are necessary?  This is addressed in Theorem \ref{unitary_opt_soln} below.

%To ensure a data-driven approach that is scalable for machine learning and optimization analyses, $C$ is restricted to the group of unitary matrices $U(N_{\Psi})$. As alluded to in Remark \ref{remark_psi_vs_phi}, this will allow for analytical solution of a subset of Pareto optimal solutions.  This means the deviation from conjugacy pseudometrics developed here do not seek precise values of $r_1$ and $r_2$ at optimality. Rather they seek to enable relative arguments between systems such as arguing system $a$ is closer/farther to system $b$ relative to the distance between system $c$ and $b$. Retaining the ability to make such relative arguments preserves the utulity of the approaches developed here for a variety of machine learning and optimization analyses.  Furthermore, it will be shown below that the group of unitary matrices is an optimal group that recovers topological conjugacy. Thus, while the restriction to solutions over the group of unitary matrices may seem quite restrictive at first glance, in fact it provides an attractive approach to retain the important theoretical consistencies associated with topological conjugacy while allowing for useful computationally efficient analytical solutions as opposed to relying on costly solutions to high dimensional non-convex optimization problems. The theoretical consistency aspect of solving Eq.\ref{mopt_r} over unitary matrices is discussed next.

\begin{theorem}
\label{unitary_opt_soln}
If two topologically conjugate systems have discrete spectra with Koopman eigenfunctions scaled such that $\max(\Phi_{f,i}),\max(\Phi_{g,i}) = 1 \forall i =1 \ldots N_{\Psi}$, then there is a unitary matrix $C \in U(N_{\Psi})$ such that $\left\Vert \Phi_g - C \Phi_f \right\Vert_F = 0$ and $\\ \left\Vert \Lambda_f - C^{-1} \Lambda_g C \right\Vert_F = 0$.
\end{theorem}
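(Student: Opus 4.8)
The plan is to combine the spectral equivalence already recorded in Eq.~\ref{eig_conj} with the existence of a transformation guaranteed by Proposition~\ref{psi_vs_phi}, and then to show that the eigenfunction normalization forces that transformation to be unitary. Because the two systems are conjugate, Eq.~\ref{eig_conj} lets me order the eigenvalues so that $\Lambda_f=\Lambda_g\equiv\Lambda$, while Proposition~\ref{psi_vs_phi} supplies some $C\in GL(N_\Psi)$ with $\Phi_g=C\Phi_f$ and $\Lambda_f=C^{-1}\Lambda_g C$. With the eigenvalues aligned, the second condition reads $C\Lambda=\Lambda C$, so the entire argument reduces to analysing which matrices commute with the eigenvalue matrix and then checking the scaling of their entries.

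The structural reason this commutation pins down $C$ is the standard behaviour of Koopman eigenfunctions under conjugacy: if $\phi_{g,i}$ is an eigenfunction of $\mathcal{K}_g$ with eigenvalue $\lambda_{g,i}$, then $\phi_{g,i}\circ h$ is an eigenfunction of $\mathcal{K}_f$ with the identical eigenvalue, since $(\phi_{g,i}\circ h)\circ f=\phi_{g,i}\circ(g\circ h)=\lambda_{g,i}(\phi_{g,i}\circ h)$. Each eigenfunction of $f$ is therefore, up to a single multiplicative constant $\beta_i$, the pullback of the matching eigenfunction of $g$, and evaluating along corresponding orbits $y_n=h(x_n)$ produces the purely diagonal relation $\Phi_{f,i,n}=\beta_i\Phi_{g,i,n}$. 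Equivalently, for a simple spectrum the commutation $C\Lambda=\Lambda C$ forces $C=\mathrm{diag}(\beta_1^{-1},\dots,\beta_{N_\Psi}^{-1})$ to be diagonal, which automatically satisfies $C^{-1}\Lambda_g C=\Lambda_f$ and reduces $\Phi_g=C\Phi_f$ to the row-wise scaling $\Phi_{g,i}=\beta_i^{-1}\Phi_{f,i}$.

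It is at this point that the normalization hypothesis does its work. Reading $\max(\Phi_{f,i})$ and $\max(\Phi_{g,i})$ as the largest modulus attained by the $i$th eigenfunction over the sampled data, the diagonal relation gives $\max_n|\Phi_{f,i,n}|=|\beta_i|\,\max_n|\Phi_{g,i,n}|$, that is, $1=|\beta_i|\cdot 1$. Hence $|\beta_i|=1$ for every $i$, so $C$ is a diagonal matrix with unit-modulus entries and therefore lies in $U(N_\Psi)$, with both residuals simultaneously zero by construction.

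The step I expect to demand the most care is the one-to-one pairing of eigenfunctions used to make $C$ exactly diagonal. For a simple spectrum this pairing is unambiguous and the argument above is complete, but when an eigenvalue has multiplicity greater than one, commuting with $\Lambda$ only forces $C$ to be block diagonal, and the conjugacy-induced correspondence between the two degenerate eigenspaces is a priori an arbitrary invertible mixing rather than a diagonal scaling. I would close this gap by selecting, within each repeated-eigenvalue block, the eigenfunctions of $f$ to be the pullbacks $\phi_{g,i}\circ h$ of a fixed basis of the corresponding $g$-eigenspace, which renders the block diagonal so that the modulus argument applies blockwise. I would also note that if $\Lambda_f$ and $\Lambda_g$ store the common eigenvalues in different orders, the alignment is implemented by a permutation matrix; since permutations are unitary, composing the permutation with the unit-modulus diagonal scaling leaves $C$ in $U(N_\Psi)$.
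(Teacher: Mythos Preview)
Your argument is correct and lands on the same structural conclusion as the paper---$C$ is a permutation composed with a unit-modulus diagonal---but the route is noticeably different. The paper works concretely: it writes each row of $\Phi$ as $\phi_{0,i}[\lambda_i,\dots,\lambda_i^{N_{\Delta t}}]$, factors $C=\Theta P$, lets $P$ align the eigenvalue rows, and then reads off $\Theta=D_{\phi,g}D_{\phi,f}^{-1}$ directly from the resulting equation; unitarity of $\Theta$ follows because the normalization makes $|\phi_{0,i}|=1$. For the operator residual the paper does a separate Frobenius-trace expansion to show that any diagonal unitary $\Gamma$ yields $\|\Lambda_f-(\Gamma P)^*\Lambda_g(\Gamma P)\|_F=0$, and then sets $\Gamma=\Theta$. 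You instead invoke Proposition~\ref{psi_vs_phi} abstractly, extract the commutation $C\Lambda=\Lambda C$, and use the commutant of a simple diagonal matrix (reinforced by the eigenfunction-pullback observation) to force $C$ diagonal; the operator residual then vanishes automatically without any trace manipulation. Your approach is cleaner and unifies the two residuals in one stroke, at the cost of needing the commutant characterization and a short discussion of the degenerate-eigenvalue case; the paper's explicit construction is more self-contained and makes the role of the initial-condition phases $\phi_{0,i}$ visible, but pays for it with the longer trace computation.
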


\begin{proof}
First, consider that the $ith$ row of $\Phi \in {N_{\Psi} \times N_{\Delta t}}$ is given by $\phi_{0,i} [\lambda_i  \cdots \lambda^{N_{\Delta t}}_i]$ where $\phi_{0,i}$ denotes the value of the eigenfunction corresponding to $\lambda_i$ at $t=0$.  So the rows of $\Phi$ are just simple harmonic signals in discrete time where $\phi_{0,i}$ dictates amplitude and phase while $\lambda_i$ dictates decay rate and frequency. With this perspective in mind the term inside the norm operator of Eq. \ref{traj_conj_eig} is
\begin{equation}
\label{signals}
%& \Phi_g = \left [
%\begin{matrix}
%\phi_{0,i} & & \\
%& \ddots & \\
%& & \phi_{0,N}
%\end{matrix}
%\right]_g 
\left[
\begin{matrix}
\phi_{0,i} \lambda_{i}  \cdots \phi_{0,i} \lambda^{N_{\Delta t}}_{i} \\
 \vdots  \\
\phi_{0,N_{\Psi}} \lambda_{N_{\Psi}}  \cdots \phi_{0,N_{\Psi}} \lambda^{N_{\Delta t}}_{N_{\Psi}}
\end{matrix}
\right]_g
=C\left [
%\begin{matrix}
%\phi_{0,i} & & \\
%& \ddots & \\
%& & \phi_{0,N}
%\end{matrix}
%\right]_f
%\left[
\begin{matrix}
\phi_{0,i} \lambda_{i}  \cdots \phi_{0,i} \lambda^{N_{\Delta t}}_{i} \\
 \vdots  \\
\phi_{0,N_{\Psi}} \lambda_{N_{\Psi}}  \cdots \phi_{0,N_{\Psi}} \lambda^{N_{\Delta t}}_{N_{\Psi}}
\end{matrix}
\right]_f .
\end{equation}
Note that the rows of these matrices will not be aligned since it is assumed the eigensolutions for the two systems are arbitrarily ordered. Since $C$ must be unitary to prove the theorem,  $C$ can be set to the composition of matrix $\Theta \in U(N_{\Psi})$ and permutation matrix $P \in U(N_{\Psi})$, leading to $C=\Theta P$. Note the standard definition of a permutation matrix is one in which the sum of elements in each column is one,  the sum in each row is one, and the elements of the matrix can only be one or zero.  This results in
\begin{equation}
\label{perm_f}
%& \Phi_g = \left [
%\begin{matrix}
%\phi_{0,i} & & \\
%& \ddots & \\
%& & \phi_{0,N}
%\end{matrix}
%\right]_g 
\left[
\begin{matrix}
\phi_{0,i} \lambda_{i}  \cdots \phi_{0,i} \lambda^{N_{\Delta t}}_{i} \\
 \vdots  \\
\phi_{0,N_{\Psi}} \lambda_{N_{\Psi}}  \cdots \phi_{0,N_{\Psi}} \lambda^{N_{\Delta t}}_{N_{\Psi}}
\end{matrix}
\right]_g
=\Theta P \left [
%\begin{matrix}
%\phi_{0,i} & & \\
%& \ddots & \\
%& & \phi_{0,N}
%\end{matrix}
%\right]_f
%\left[
\begin{matrix}
\phi_{0,i} \lambda_{i}  \cdots \phi_{0,i} \lambda^{N_{\Delta t}}_{i} \\
 \vdots  \\
\phi_{0,N_{\Psi}} \lambda_{N_{\Psi}}  \cdots \phi_{0,N_{\Psi}} \lambda^{N_{\Delta t}}_{N_{\Psi}}
\end{matrix}
\right]_f.
\end{equation}
From Eq. \ref{perm_f}, $P$ will permute the rows of the matrix corresponding to system $f$. Without loss of generality, $P$ can be chosen to be the permutation matrix that aligns the rows of the matrix for system $f$ such that the rows for both system matrices correspond to the same eigenvalues. This leads to
\begin{equation}
\label{perm_f_short}
D_{\phi,g} L_g = \Theta D_{\phi,f}  L_g, 
\end{equation}
where
\begin{equation}
\begin{split}
& D_{\phi,g} = \left [
\begin{matrix}
\phi_{0,i} & & \\
& \ddots & \\
& & \phi_{0,N}
\end{matrix}
\right]_g ,
D_{\phi,f} = \left [
\begin{matrix}
\phi_{0,i} & & \\
& \ddots & \\
& & \phi_{0,N}
\end{matrix}
\right]_f \\
& L_g = \left[ \begin{matrix}
\lambda_{i}  \cdots  \lambda^{N_{\Delta t}}_{i} \\
 \vdots  \\
\lambda_{N}  \cdots \lambda^{N_{\Delta t}}_{N}
\end{matrix}
\right].
\end{split}
\end{equation}
Therefore the action of $P$ is to align the discrete time signals from Eq. \ref{signals} such that the decay rates and frequencies in each row of the system $g$ matrix matches with those in the system $f$ matrix.  

The phases between the signals are not yet the same.  This is resolved by solving for $\Theta$ from Eq. \ref{perm_f_short} which gives
\begin{equation}
\label{theta_unitary}
\Theta = D_{\phi,g} D^{-1}_{\phi,f}.
\end{equation}
Since the eigenfunctions have been assumed to have amplitudes equal to one, $\left\vert \phi_{0,i} \right\vert = 1 \forall i$. As a result, $D_{\phi,g}$ and $D_{\phi,f}$ are unitary which implies $D_{\phi,g} D^{-1}_{\phi,f}$ is unitary by composition properties of groups. Therefore $\Theta$ is unitary and can be interpreted as aligning the phases of the time signals from Eq. \ref{signals}. Finally, since $\Theta \in U(N_{\Psi})$ and $P \in U(N_{\Psi})$, then $C \in U(N_{\Psi})$ and $\left\Vert \Phi_g - C \Phi_f \right\Vert_F = 0$ at conjugacy.

Next, consider the operator residual $\left\Vert \Lambda_f - C^{-1} \Lambda_g C \right\Vert_F = 0$. Assuming that $C$ is unitary, it must be shown that \\ $\left\Vert \Lambda_f - C^{*} \Lambda_g C \right\Vert_F = 0$ at conjugacy, where the $*$ superscript denotes complex conjugate transpose. Using the Frobenius norm trace identity gives
\begin{equation}
\label{trace}
\begin{split}
& \left\Vert \Lambda_f - C^{*} \Lambda_g C \right\Vert_F = \Tr \left( \Lambda_f - C^{*} \Lambda_g C \right)\left( \Lambda_f^{*} - C^{*} \Lambda_g^{*} C \right) \\
& = \Tr(\Lambda_f \Lambda_f^*) + \Tr(\Lambda_g \Lambda_g^*) - 2 \Tr(C^* \Lambda_g C \Lambda_f^* ),
%& = \sum_i \left\vert \lambda_{f,i} \right\vert^2 +\left\vert \lambda_{g,i} \right\vert^2 -2 \sum_{ij} \lambda_{f_i} \bar{\lambda}_{g,j} C_{ji} \bar{C}_{ji}.
\end{split}
\end{equation}
where the cyclic prperty of the trace $\Tr$ operation and the trace product idenity $\Tr (AB^*) = \Tr(BA^*)$ were used to obtain the second line of Eq. \ref{trace}. Proceeding in a similar manner as before, it is assumed without loss of generality that $C = \Gamma P$ where $\Gamma,P \in U(N_{\Psi})$ and $P$ is the same permutation matrix in Eq. \ref{perm_f} that reorders the eigenvaues of system $f$ to match the ordering of eigenvalues in system $g$, i.e. $P \Lambda_f P^{\mathrm{T}} = \Lambda_g$.  Substituting $C = \Gamma P$ into the final term of Eq. \ref{trace} leads to
\begin{equation}
\label{trace2}
\begin{split}
& \Tr(\Lambda_f \Lambda_f^*) + \Tr(\Lambda_g \Lambda_g^*) - 2 \Tr(P^{\mathrm{T}} \Gamma \Lambda_g \Gamma P \Lambda_f^*)\\
&=\Tr(\Lambda_f \Lambda_f^*) + \Tr(\Lambda_g \Lambda_g^*) - 2 \Tr(\Gamma^* \Lambda_g \Gamma \Lambda_g^*) \\
&= \sum_i \left\vert \lambda_{f,i} \right\vert^2 +\left\vert \lambda_{g,i} \right\vert^2 -2 \lambda_{g_i} \bar{\lambda}_{g,i} \Gamma_{ii} \bar{\Gamma}_{ii} \\
& =  2 \sum_i \left\vert \lambda_{g,i} \right\vert^2 -2 \lambda_{g_i} \bar{\lambda}_{g,i} \Gamma_{ii} \bar{\Gamma}_{ii}
\end{split}
\end{equation}
where $\bar{\lambda}_{g,i}$ is the complex conjugate of the $i$th eigenvalue of system $g$ and $\bar{\Gamma}_{ii}$ is the complex conjugate of element $(i,i)$ of $\Gamma$.  The cyclic property of $\Tr$ and that fact that $P \Lambda_f^* P^{\mathrm{T}}=\Lambda_g^*$ were used to obtain the second line of Eq. \ref{trace2}, and Eq. \ref{eig_conj} was used to obtain the final line. From the last line of Eq. \ref{trace2} it is clear that topological conjugacy occurs when $\Gamma_{ii} \bar{\Gamma}_{ii} = 1$.  Since $\Gamma$ is unitary, this condition implies $\Gamma$ is any diagonal matrix where each diagonal element has magnitude equal to one. Therefore, the proof of Theorem \ref{unitary_opt_soln} is completed by choosing $\Gamma = \Theta$ such that the same unitary matrix $C$ satisfies both $\left\Vert \Lambda_f - C^{*} \Lambda_g C \right\Vert_F = 0$ and $\left\Vert \Phi_g - C \Phi_f \right\Vert_F = 0$.

\end{proof}

\begin{remark}
Assuming that the amplitudes of the Koopman eigenfunctions are equal to one is not restrictive since Koopman eigenfunctions are not unique \cite{koop_dictionary}; i.e. if $\phi$ is an eigenfunction corresponding to eigenvalue $\lambda$ then $c \phi$ is also an eigenfunction corresponding to $\lambda$ for any constant $c \in \mathbb{C}$. 
\end{remark}

%\begin{remark}
%Theorem~\ref{unitary_opt_soln} means that optimizing over space of unitary matrices has a theoretical connection to topological conjugacy whereas optimizing over orthogonal matrices like in DSA \cite{ostrow_DSA} in observable space does not.
%\end{remark}

%\begin{remark}
%probably should talk about how this proof and solution differs from Hoffman in that the fact that we are dealing with a dynamical system means the solution is more general than a permmutation matrix by itself.
%\end{remark}

%\begin{remark}
%Comment on this goes beynd Wasserstein but also provides a proof that Wasserstein is ooptimal solution for part of the probem when considering distances in eigenfunction space.
%\end{remark}

%\begin{remark}
%Theorem \ref{unitary_opt_soln} relies heavily on the diagonal structure of $\Lambda_{f,g}$ enabled by utilizing the Koopman eigenfunctions.  Without transforming to This simplification comSince theorem relies heavily on that fact that we are dealing with diagonal marices which allows us to prove that in general searching over the space of unitary matrix solves the two-sided procrutes conjugacy problem. However, in observable space simplification to unitary matrices would not recover the solution at conjugacy and would require numerical optimization of high-dimensional non-convex problems to search over the space of invertible matrices. Although the 1 sided problems is easily solved using least squares solutions for T, this leas square solution would not be the optimal solution for he 2-sided problem away from conjugacy
%\end{remark}

\begin{remark}
Theorem \ref{unitary_opt_soln} shows that it is sufficient to only consider transformations involving rotation and reflection (in $\Phi$-space) to identify that two systems are equivalent. Therefore, it is reasonable to simplify Eq.\ref{mopt_r} to
\begin{equation}
\label{mopt_r_unitary}
\begin{split}
& \min_{C \in U(N_{\Psi})}  \left(r_1,r_2 \right), \\
& r_1 =  \left\Vert \Phi_g - C \Phi_f \right\Vert_F, \;r_2 =  \left\Vert \Lambda_f - C^{*} \Lambda_g C \right\Vert_F.
\end{split}
\end{equation}
Theorem \ref{unitary_opt_soln} implies that Eq. \ref{mopt_r_unitary} can be expected to be quantitatively accurate when two systems are close to conjugate since stretching is an unnecessary transformation component at conjugacy.  Indeed, stretching would be expected to be play a more significant role in precise quantification of deviation when two systems are very far from topological conjugacy. However, in such cases, Eq. \ref{mopt_r_unitary} would register large values of $r_1$ and/or $r_2$ thus capturing the relative argument that the two systems are very different from the perspective of topological conjugacy. Thus Eq. \ref{mopt_r_unitary} is expected to be qualitatively accurate very far from conjugacy.
\end{remark}

\begin{remark}
\label{mezic_metric}
It is interesting to consider the implication from Theorem \ref{unitary_opt_soln} that $\Gamma$ must be unitary and diagonal in order to maintain consistency with topological conjugacy. This imples that \emph{any} unitary diagonal $\Gamma$ will minimize $r_2$ since \\$(\Gamma P)^*\Lambda_g (\Gamma P) = P^*\Lambda_g P$.  From a purely mathematical perspective, it is known that the optimal solution for minimizing $r_2$ in isolation is a permutation matrix \cite{hoffman}. Similalrly, taking a spectral operator perspective leads to the same solution as a result of minimizing the Wasserstein distance operating on the Koopman eigenvalues \cite{redman2022,redman2024}.  So the  the main implication from Theorem \ref{unitary_opt_soln} that optimal solutions can be found by searching over unitary transformations is in agreement with the solutions from \cite{hoffman} and \cite{redman2022,redman2024} from the perspective of operator level dissimilarity measures. However, Theorem \ref{unitary_opt_soln} also suggests a more general solution than \cite{hoffman} and \cite{redman2022,redman2024} since $\Gamma$ is effectively a free paramater that can be used to minimize $r_1$.  Thus, Theorem \ref{unitary_opt_soln} offers a unified perspective for searching for transformations that minimize \emph{both} $r_1$ and $r_2$ simultaneously. This will be exploited for deriving Pareto optimal solutions in Section \ref{solns}.

\end{remark}

\begin{remark}
\label{phi_advantage}
When solving Eq. \ref{sys_conj} and Eq. \ref{traj_conj} in $\Psi$-space by restricting $T$ to be unitary(or orthogonal), e.g.  \cite{6247929,lds_distance_opt_algorithm,williams_shape,ostrow_DSA}, the resulting pseudometrics will not yield zero dissimilarity for topologically conjugate systems in general.  Since conjugacy residuals satisfy Theorem \ref{unitary_opt_soln}, the pseudometrics in this study will recover zero when $f$ and $g$ are topologically conjugate. Therefore the computational efficiency associated with the approaches in \cite{6247929,lds_distance_opt_algorithm,williams_shape,ostrow_DSA} are achieved while maintaining theoretical consistency with topological conjugacy by computing residuals in $\Phi$-space.
\end{remark}

\subsubsection{Geometry of Pareto optimality}

Even after simplifying by restricting the search for optimal solutions to the group of unitary transformations, numerical optimization methods would still be required to solve the non-convex Eq. \ref{mopt_r_unitary} for all Pareto points. This approach would not be solvable in polynomial time, and thus not scalable.  Therefore a key consideration at this point is how many Pareto optimal solutions of Eq. \ref{mopt_r_unitary} are necessary to develop pseudometrics representative of the entire Pareto front in an approximate manner? Theorem \ref{avg_pareto} below shows that the answer is only two, as along as they are the Pareto points corresponding to independent minimization of $r_1$ and $r_2$.  

\begin{theorem}
\label{avg_pareto}
Given two systems $f$ and $g$ with the optimal solution that minimizes $r_1$ given by $C_{r_1} \in U(N_{\Psi})$, and the optimal solution that minimizes $r_2$ given by $C_{r_2} \in U(N_{\Psi})$,  then over all possible Pareto optimal solutions to Eq. \ref{mopt_r_unitary}, the minimum, maximum, and average deviations from conjugacy are $d_{min}(f,g)$, $ d_{max}(f,g)$, and $d_{avg}(f,g)$ respectively where %for some $\epsilon \ll 1$ 
\begin{equation}
\label{dmin}
%r_{min} = (1+\epsilon)\sqrt {r^2_1(C_{r_1})+r^2_2(C_{r_2})} \approx \sqrt {r^2_1(C_{r_1})+r^2_2(C_{r_2})}
d_{min}(f,g) \approx \sqrt {r^2_1(C_{r_1})+r^2_2(C_{r_2})},
\end{equation}
\begin{equation}
\label{dmax}
% (1-\epsilon) \sqrt {r^2_1(C_{r_2})+r^2_2(C_{r_1})} 
d_{max}(f,g) \approx \sqrt {r^2_1(C_{r_2})+r^2_2(C_{r_1})},
\end{equation}
and
\begin{equation}
\label{davg}
\begin{split}
 & d_{avg}(f,g) \approx [r_1(C_{r_2}) r_2(C_{r_1}) A(2r_1(C_{r_2}),2r_2(C_{r_1})) - \\
 & r_2(C_{r_2}) r_1(C_{r_2}) A(2r_2(C_{r_2}),2r_1(C_{r_2})) - r_2(C_{r_1}) r_1(C_{r_1}) A(2r_2(C_{r_1}),2r_1(C_{r_1})) \\
 &+ r_1(C_{r_1}) r_2(C_{r_2}) A(2r_1(C_{r_1}),2r_2(C_{r_2}))] \\
 & / [r_1(C_{r_2}) r_2(C_{r_1}) - r_2(C_{r_2}) r_1(C_{r_2})-\\
 & r_2(C_{r_1}) r_1(C_{r_1})+r_1(C_{r_1}) r_2(C_{r_2})]
%d_{avg}(f,g) \approx \frac{2}{3} \frac{d^3_{max}-d^3_{min}}{d^2_{max}-d^2_{min}}.
%& d_{avg}(f,g) \approx \sqrt{(r_1(C_{r_1})+\Delta_{r_{1}}/2)^2 + (r_2(C_{r_2})+\Delta_{r_{2}}/2)^2} \\
%& + [ \Delta_{r_{2}}^3 \sinh^{-1}(\Delta_{r_{1}}/ \Delta_{r_{2}})+\Delta_{r_{1}}^3 \sinh^{-1}(\Delta_{r_{2}}/ \Delta_{r_{1}}) \\
%& + 2 \Delta_{r_{1}} \Delta_{r_{2}} \sqrt{\Delta_{r_{1}}^2+\Delta_{r_{2}}^2} ]/(12\Delta_{r_{1}} \Delta_{r_{2}}),
\end{split}
\end{equation}
where
\begin{equation}
\begin{split}
& A(\Delta_1,\Delta_2) = \Delta_2^3 \sinh^{-1}(\Delta_1/ \Delta_2)+\Delta_1^3 \sinh^{-1}(\Delta_2/ \Delta_1) \\
& + 2 \Delta_1 \Delta_2 \sqrt{\Delta_1^2+\Delta_2^2} ]/(12\Delta_1 \Delta_2).
\end{split}
%\Delta_{r_{1}} = r_1(C_{r_2})-r_1(C_{r_1})
%%\Delta_{r_{2}} = r_2(C_{r_1})-r_2(C_{r_2})
%\end{equation}
%and
%\begin{equation}
%%\Delta_{r_{1}} = r_1(C_{r_2})-r_1(C_{r_1})
%\Delta_{r_{2}} = r_2(C_{r_1})-r_2(C_{r_2}).
\end{equation}

\end{theorem}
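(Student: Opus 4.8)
The plan is to treat the problem purely as planar geometry in the objective space with coordinates $(r_1,r_2)$, where deviation from conjugacy $d=\sqrt{r_1^2+r_2^2}$ is simply Euclidean distance from the origin. To lighten notation I would write $a_1=r_1(C_{r_1})$, $a_2=r_1(C_{r_2})$, $b_1=r_2(C_{r_1})$, and $b_2=r_2(C_{r_2})$, so that the two supplied Pareto points are $P_1=(a_1,b_1)$ and $P_2=(a_2,b_2)$. Since $C_{r_1}$ globally minimizes $r_1$ and $C_{r_2}$ globally minimizes $r_2$, any Pareto optimal point $(r_1,r_2)$ must satisfy $a_1\le r_1\le a_2$ and $b_2\le r_2\le b_1$: a point violating $r_1\le a_2$ while $r_2\ge b_2$ would be dominated by $P_2$, and symmetrically for the other bounds. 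Hence the entire (unknown) Pareto front lies inside the axis-aligned rectangle $R=[a_1,a_2]\times[b_2,b_1]$, whose lower-left corner $(a_1,b_2)$ is the utopia point and whose upper-right corner $(a_2,b_1)$ is the nadir point.

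The first step handles $d_{min}$ and $d_{max}$. Because $d$ is strictly increasing in each of $r_1,r_2$ on the nonnegative quadrant, its extrema over $R$ occur at opposite corners: the minimum at the utopia point, giving $d_{min}\approx\sqrt{a_1^2+b_2^2}=\sqrt{r_1^2(C_{r_1})+r_2^2(C_{r_2})}$, and the maximum at the nadir point, giving $d_{max}\approx\sqrt{a_2^2+b_1^2}=\sqrt{r_1^2(C_{r_2})+r_2^2(C_{r_1})}$. These reproduce Eqs.~\ref{dmin}--\ref{dmax}; the approximations are precisely the bounds extractable from only two anchor points, since the true front need not reach the corners.

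For $d_{avg}$ the idea is to model the ``average over all Pareto solutions'' as the uniform spatial average of $d$ over the bounding rectangle $R$, which is all the information the two anchors determine. I would therefore set
\begin{equation}
d_{avg}=\frac{1}{|R|}\int_{b_2}^{b_1}\!\!\int_{a_1}^{a_2}\sqrt{r_1^2+r_2^2}\,dr_1\,dr_2,\quad |R|=(a_2-a_1)(b_1-b_2),
\end{equation}
and observe that $(a_2-a_1)(b_1-b_2)$ expands to exactly the four-term denominator of Eq.~\ref{davg}. The remaining task is to evaluate the double integral in closed form. Integrating $\sqrt{r_1^2+r_2^2}$ once in $r_1$ produces a $\tfrac12[r_1\sqrt{r_1^2+r_2^2}+r_2^2\sinh^{-1}(r_1/r_2)]$ term; integrating again in $r_2$, with the $\sinh^{-1}$ piece handled by parts, yields the antiderivative $F(r_1,r_2)=\tfrac16[2r_1r_2\sqrt{r_1^2+r_2^2}+r_1^3\sinh^{-1}(r_2/r_1)+r_2^3\sinh^{-1}(r_1/r_2)]$, which one checks equals exactly $r_1r_2\,A(2r_1,2r_2)$. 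Evaluating the integral as $F(a_2,b_1)-F(a_2,b_2)-F(a_1,b_1)+F(a_1,b_2)$ and invoking the symmetry $A(\Delta_1,\Delta_2)=A(\Delta_2,\Delta_1)$ reproduces the four numerator terms of Eq.~\ref{davg}, completing the proof.

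The main obstacle is the closed-form double integration: the integration-by-parts step that generates the inverse-hyperbolic-sine terms, and the verification that the resulting antiderivative collapses to the compact form $r_1r_2\,A(2r_1,2r_2)$ (the factor of two inside $A$ is purely an artifact of the $1/12$ normalization built into its definition). A secondary and more conceptual point that requires care is justifying that uniform averaging over $R$ is the appropriate surrogate for averaging over Pareto solutions: with only the two extreme anchors available, the true front and the measure it induces are unknown, so this is a deliberate modeling approximation, reflected by the ``$\approx$'' throughout Eqs.~\ref{dmin}--\ref{davg}, rather than an exact identity.
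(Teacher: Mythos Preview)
Your proposal is correct and follows essentially the same geometric argument as the paper: both bound the unknown Pareto front inside the axis-aligned rectangle determined by the two anchor points, read off $d_{min}$ and $d_{max}$ from the utopia and nadir corners, and obtain $d_{avg}$ as the uniform average of $\sqrt{r_1^2+r_2^2}$ over that rectangle. The only substantive difference is that the paper introduces an auxiliary infinitesimal $\epsilon$ when constructing the bounding box and then defers the closed-form evaluation of the rectangle average to a cited reference, whereas you carry out the double integration explicitly and verify that the antiderivative collapses to $r_1 r_2\,A(2r_1,2r_2)$; your treatment is therefore slightly more self-contained but not a different route.
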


\begin{proof}
Theorem \ref{avg_pareto} is proved by considering the geometry of a two-dimensional Pareto front, as depicted in Fig. \ref{pareto}. The known Pareto optimal solutions are depicted in Fig. \ref{pareto} as filled circles at the top left and bottom right corners of the bounded region. The top left point is located at $(r_1(C_{r_1}), r_2(C_{r_1}))$ and the bottom right point is at \\$(r_1(C_{r_2}), r_2(C_{r_2}))$.  

Although the remaining Pareto points are unknown, they can be bounded. From the bottom right Pareto point, consider an infinitesimally small parameter $\epsilon \ll 1$. Since the bottom right point is the best solution possible for $r_2$, all points with $r_1=(1-\epsilon)r_1(C_{r_2})$ and $r_2(C_{r_2})< r_2 \le (1-\epsilon)r_2(C_{r_1})$ are Pareto optimal with respect to the two known Pareto optimal points. This vertical line forms the right edge of the bounded region in Fig.\ref{pareto}, and the point at the top of the line corresponds to the upper right corner of the bounded region. The top edge is formed by the line through the top right corner and the known Pareto point at the top-left. The left and bottom edges of the bounded region are similarly formed by noting that the bottom left corner of the region occurs at $((1+\epsilon)r_1(C_{r_1}), (1+\epsilon)r_2(C_{r_2}))$. The vector magnitudes from the origin to the bottom left corner, $d_{min}$, and from the origin to the top right corner $d_{max}$ immediately follow from the geometry of the bounded region,
\begin{equation}
\label{dmin_approx}
\begin{split}
& d_{min} = (1+\epsilon)\sqrt {r^2_1(C_{r_1})+r^2_2(C_{r_2})} \\
& \approx \sqrt {r^2_1(C_{r_1})+r^2_2(C_{r_2})},
\end{split}
\end{equation}
and
\begin{equation}
\label{dmax_approx}
\begin{split}
& d_{max}=(1-\epsilon) \sqrt {r^2_1(C_{r_2})+r^2_2(C_{r_1})} \\
&  \approx \sqrt {r^2_1(C_{r_2})+r^2_2(C_{r_1})}.
\end{split}
\end{equation}
%The fact that $\epsilon \ll 1$ was used to arrive at the approximations in Eqs.\ref{dmin} and \ref{dmax}.
The average deviation from conjugacy over all points in the rectangle depicted in Fig. \ref{pareto} is given by Eq. \eqref{davg} which follows directly from \cite{rectangle_dist}.

% For one-column wide figures use
\begin{figure}
% Use the relevant command to insert your figure file.
% For example, with the graphicx package use
  \includegraphics[width=0.45\textwidth]{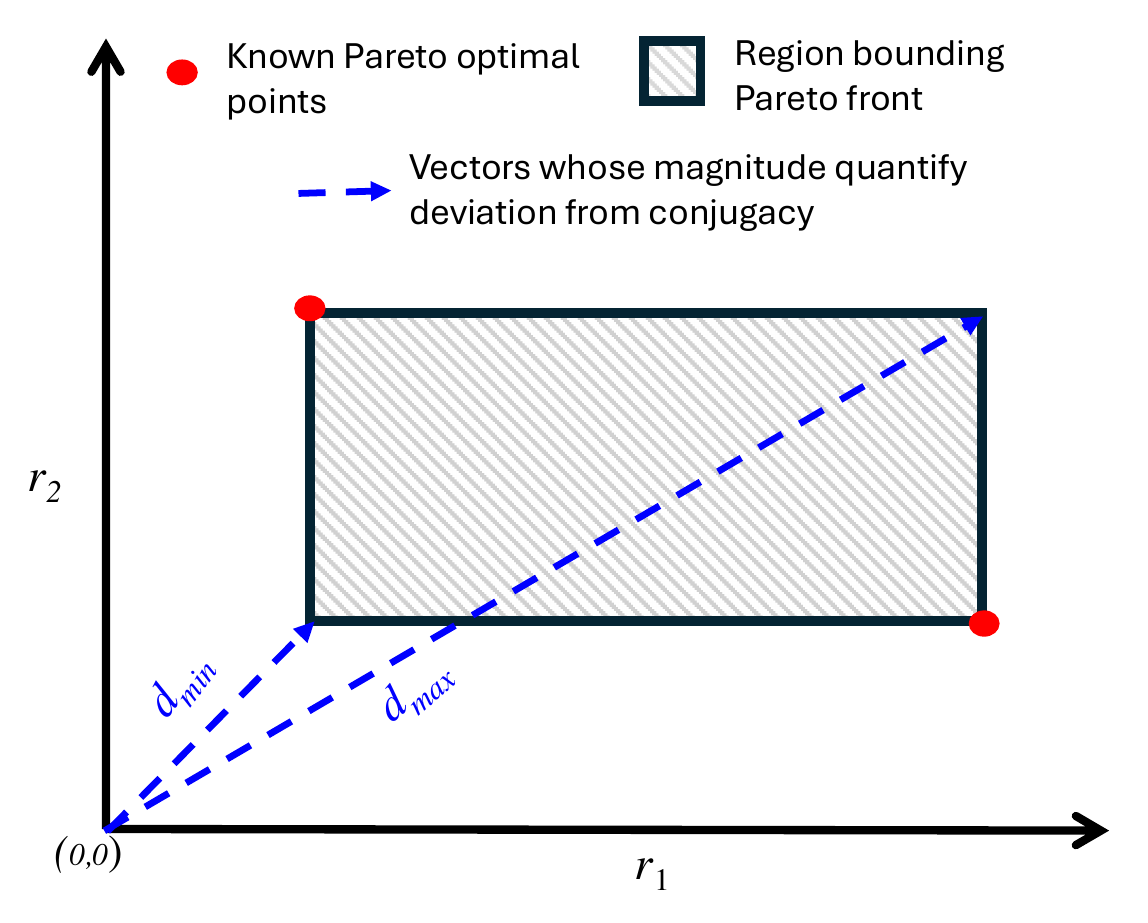}
% figure caption is below the figure
\caption{Geometric bounds on all possible Pareto optimal solutions; the origin corresponds to topological conjugacy}
\label{pareto}       % Give a unique label
\end{figure}

\end{proof}

\begin{remark}
Theorem \ref{avg_pareto} simplifies the problem considerably since analytical solutions can be found for the two independent minimization problems (derived next in Section \ref{solns}). As a result, numerically solving Eq. \ref{mopt_r_unitary} for all Pareto optimal points is not necessary since representative values of $d$ that account for all possible Pareto optimal solutions can be efficiently computed in an approximate manner. 
%demonstrates that minimum, maximum, and average deviations from conjugacy can be approximated by only computing Pareto optimal solutions corresponding to independent optimization of $r_1$ and $r_2$. 
\end{remark}

\begin{remark}
All deviations from conjugacy in Eqs. \ref{dmin}-\ref{davg} are pseudometrics; i.e.  $d(a,b)=0$ when $a \sim b$, $d(a,b)=d(b,a)$, and $d(a,b) \le d(a,c) + d(b,c)$ \cite{metrics}. This follows from the fact that Eq. \ref{mopt_r_unitary} is solved over the group of unitary matrices. Since unitary matrices are isometric, $r_1$ and $r_2$ will be pseudometrics \cite{williams_shape,ostrow_DSA}. Furthermore, since $d_{min}$, $d_{max}$, and $d_{avg}$, are derived as Euclidean distances of vectors with components $r_1$ and $r_2$,  they are also pseudometrics. Pseudometrics, as opposed to semimetrics that do not obey the triangle inequality, provide favorable properties for use in machine learning applications \cite{williams_shape}. 
\end{remark}

\subsection{Analytical solutions in $\Phi$-space}
\label{solns}
Theorems \ref{unitary_opt_soln} and \ref{avg_pareto} together lead to two substantial simplifications: the first is that only solutions in the unitary group need to be found as opposed to the general linear group, and the second is that within the unitary group only two solutions need to be found --  $C_{r_1}$ and $C_{r_2}$.  These analytical solutions and their computational complexity are described next.

First, consider the solution for $C_{r_1}$. The optimal transformation matrix $C_{r_1}$ is obtained by minimizing $r_1$,
\begin{equation}
\label{phi_1procrustes}
\min_{C \in U(N_{\Psi})} \left\Vert \Phi_g - C \Phi_f \right\Vert_F.
\end{equation}
The optimal solution to Eq. \ref{phi_1procrustes} is given by Sch\"{o}nemann's solution to the orthogonal Procrustes problem \cite{schonemann},
\begin{equation}
\label{Cr1}
C_{r_1} = U_{\phi} V_{\phi}^*, \; U_{\phi}, V_{\phi} \in U(N_{\Psi}),
\end{equation}
where the columns of $U_{\phi}$ and $V_{\phi}$ contain the left and right singular vectors of the matrix $\Phi_g \Phi_f^*$. Widely available singular value decomposition solvers, such as the one available in MATLAB, can solve for singular vectors in $\mathcal{O}(N_{\Psi}^3)$ time making Eq. \ref{phi_1procrustes} computationally efficient to solve.

The solution for $C_{r_2}$ is obtained by solving 
\begin{equation}
\label{phi_2procrustes}
\min_{C \in U(N_{\Psi})} \left\Vert \Lambda_f - C^{*} \Lambda_g C \right\Vert_F.
\end{equation}
Since $\Lambda_f$ and $\Lambda_g$ are normal, it is known from \cite{hoffman} that the optimal solution for $C_{r_2}$ must act to permute the eigenvalues stored in $\Lambda_g$ to optimally align with the ordering in $\Lambda_f$ such that $\sum_i \vert \lambda_{f,i} - \lambda_{g,i} \vert^2$ is minimized.  The solution for the optimal permutation matrix $P$ is obtained by solving the linear assignment problem with elements of the distance matrix given by $D_{ij} = \vert \lambda_{f,i} -\lambda_{g,j} \vert^2$. Standard algorithms can solve the linear assignment problem in  $\mathcal{O}(N_{\Psi}^3)$ time. %Clearly this is much better suited from a scalability perspective compared to solving non-convex optimization problems in $N_{\Psi}^2$ or $N_{\Psi}(N_{\Psi}-1)/2$ variables.

While an optimal permutation matrix $P$ is \emph{one} solution to Eq. \ref{phi_2procrustes}, in general it is not the solution to the conjugacy residual problem given by Eq. \ref{mopt_r_unitary}; see Remark \ref{mezic_metric}. Since any solution $C_{r_2} = \Gamma P$ such that $\sum_i \vert \lambda_{f,i} - \lambda_{g,i} \vert^2$ is minimized will minimize Eq. \ref{phi_2procrustes}, one also requires a solution for $\Gamma$. Thus a more general solution than a pure permutation matrix is required for computing deviation from conjugacy.  The solution for  $\Gamma$ proposed here is obtained by finding the best solution (in a least squares sense) that minimizes Eq. \ref{phi_1procrustes}.  Substituting $C_{r_2}=\Gamma P$ into Eq. \ref{phi_1procrustes} gives
\begin{equation}
\min_{\Gamma \in U(N_{\Psi})} \left\Vert \Phi_g - \Gamma P \Phi_f \right\Vert_F.
\end{equation}
If one were not concerned with constraints on $\Gamma$ the least squares solution $\tilde{\Gamma}=\Phi_g(P \Phi_f)^{\dagger}$, where $\dagger$ denotes the Moore-Penrose pseudoinverse, would suffice. Noting, that $\Gamma$ must be diagonal restricts the solution to $\Delta_{\tilde{\Gamma}}=Diag(\tilde{\Gamma})$ where $Diag$ denotes the diagonal matrix formed from the diagonal elements of $\tilde{\Gamma}$. The restriction that $\Gamma$ is unitary leads to
\begin{equation}
\label{gamma_opt}
\Gamma = U_{\Delta_{\tilde{\Gamma}}} V_{\Delta_{\tilde{\Gamma}}}^*,
\end{equation}
where the columns of $U_{\Delta_{\tilde{\Gamma}}}$ and $V_{\Delta_{\tilde{\Gamma}}}$ contain the singular vectors of $\Delta_{\tilde{\Gamma}}$. Therefore the solution for $C_{r_2}$ is
\begin{equation}
\label{Cr2}
C_{r_2} = U_{\Delta_{\tilde{\Gamma}}} V_{\Delta_{\tilde{\Gamma}}}^* P,
\end{equation}
which can be computed in $\mathcal{O}(N_{\Psi}^3)$ time.

It is worth noting that $r_2({C_{r_2}})$ corresponds to the Wasserstein distance proposed in \cite{redman2022,redman2024} for any choice of $\Gamma$.  The solution for $\Gamma$ determines whether a Pareto optimal solution to Eq. \ref{mopt_r_unitary} is found.  For instance, assuming $\Gamma$ equals the identity matrix leads to $r_2(P)$, which is equivalent to the solution for $r_2(C_{r_2})$ when $\Gamma$ is found from Eq. \ref{gamma_opt}. However,  the solution $r_1(P)$ will be dominated (in a Pareto optimality sense) by the solution $r_1(C_{r_2})$ since equation Eq. \ref{Cr2} includes the possibility that $\Gamma$ is the identity matrix,but is more general.

\subsection{Solutions in $\Psi$-space}

While the primary focus here is computing deviation from conjugacy pseudometrics, one may be interested in analyzing systems in $\Psi$-space. Therefore, for the sake of completeness transformations in $\Psi$-space are recovered by substituting Eqs. \ref{Cr1} and \ref{Cr2} into Eq. \ref{TtoC}. Since solutions for $T$ are based on left-eigenvectors which are not unique, an optimal magnitude for the eigenvectors must be found.  Recognizing that $W_g$ can be arbitrarily scaled by a diagonal matrix $\Omega$, Eq. \ref{TtoC} becomes
\begin{equation}
\label{TfromC}
T_C = (\Omega W_g)^{-1}C_{r_{1,2}} W_f.
\end{equation}
Similar to the strategy used when solving for $\Gamma$, $\Omega$ is obtained by finding the matrix that makes $T_C$ closest to the least squares solution of Eq. \ref{traj_conj},$T_{LSQ}=\Psi_g \Psi_f^{\dagger}$, i.e.
\begin{equation}
\min_{\Omega} \left\Vert(\Omega W_g)^{-1}C_{r_{1,2}} W_f - T_{LSQ} \right\Vert_F.
\end{equation}
The solution for $\Omega^{-1}$ is then given by
\begin{equation}
\Omega^{-1} = Diag( W_g T_{LSQ}(C_{r_{1,2}} W_f)^{\dagger} ).
\end{equation}

The advantages of searching for an optimal transformation in $\Phi$-space become apparent when one recognizes that although $C_{r_{1,2}}$ are unitary, $T_C$ is not in general.  Since the pseudoinverse is $\mathcal{O}(N_{\Psi}^3)$ computational complexity, Eq. \ref{TfromC} can be computed in polynomial time. In contrast, directly solving for $T$ without an analytical solution requires solving the two-sided Procrustes probem \cite{MENG2022108334} given by
\begin{equation}
\label{2sidedProc}
\min_{T\in GL(N_\Psi)} \left\Vert K_f- T^{-1} K_g T\right\Vert_F.
\end{equation}
Since there are no polynomial-time solutions to Eq. \ref{2sidedProc} \cite{MENG2022108334}, numerical optimization algorithms for non-convex problems would be required. Searching over the general linear group implies an optimization problem in $N_{\Psi}^2$ variables.  Restricting the problem to special orthogonal transformations requires solving an optimization problem in $N_{\Psi}(N_{\Psi}-1)/2$ variables \cite{ostrow_DSA,lds_distance_opt_algorithm}.  For context, an algorithm such as the one presented in \cite{lds_distance_opt_algorithm} requires $\mathcal{O}(N_{\Psi}^3)$ time for a \emph{single} iteration and numerous iterations would be required for convergence. Coupled with the discussion in Remark \ref{phi_advantage}, it is clear that solving for optimal transformations in $\Phi$-space offers substantial computational efficiency, scalability, and theoretical consistency advantages.  

%the relevant matrix equation using non-convex optimization algorithms. For example, using the approach from \cite{ostrow_DSA} in $\Psi$-space would require solving an optimization problem over $N_{\Psi}(N_{\Psi}-1)/2$ variables for $T$.  This approach is clearly not scalable if one wishes to conduct analyses such as classification or optimization that may require solutions for $C_{r_2}$ or $T$ corresponding to multiple combinations of systems in which each system is $N_{\Psi} \ge 10$.  

Finally, it is interesting to note how viewing $T_C$ as an initial guess for $T$ compares to other approaches to generating initial guesses for $T$. These initial guesses are used in heuristics for solving the two-sided Procrustes problem \cite{MENG2022108334}.  Other approaches reported in the literature require that $K_{f,g}$ be symmetric or that $K_{f,g}$ be transformed into Hermitian matrices based on their symmetric and skew-symmetric components in order to generate an initial guess for $T$ \cite{umeyama,MENG2022108334}.  Both assumptions have limited suitability when dealing with dynamical systems.  In contrast, the approach described here produces an initial guess that is directly interpretable from the perspective of dynamical systems without the need for ad-hoc transformations of $K_{f,g}$. 

\section{Results}
\label{results}

Results are presented for a simple benchmarking problem followed by an engineering example. The benchmarking problem was created by utilizing two paramaters that control deviation from conjugacy in an easily interpretable manner.  As an illustration of practical application, the engineering example illustrates how the deviation from conjugacy method can be used to compare biological mechanical actuation and man-made or bio-inspired mechanical actuation from the perspective of how information is dynamically exchanged between the morphology and control system. All computations were implemented in MATLAB.

\subsection{Benchmarking example}

Consider the systems $f$ and $g$ given by Eqs. \ref{sys_f} and \ref{sys_g} respectively.
\begin{equation}
\label{sys_f}
\frac{d}{dt}
\left[ \begin{matrix}
x_1 \\
x_2
\end{matrix}
\right]
= 
\left[ \begin{matrix}
\mu x_1 \\
\lambda (x_2-x_1^2)
\end{matrix}
\right]
\end{equation}

\begin{equation}
\label{sys_g}
\frac{d}{dt}
\left[ \begin{matrix}
y_1 \\
y_2
\end{matrix}
\right]
= 
\left[ \begin{matrix}
(2\alpha\mu-\beta\lambda) y_1 + 2(\alpha\mu-\beta\lambda) y_2 +\beta\lambda(y_1+y_2)^2 \\
(\beta\lambda-\alpha\mu) y_1 + (2\beta\lambda-\alpha\mu) y_2 -\beta\lambda(y_1+y_2)^2
\end{matrix}
\right].
\end{equation}
When $\alpha=1$ and $\beta=1$ then systems $f$ and $g$ are topologically conjugate where the homemorphism $h$ is given by 
\begin{equation}
\label{h_xy}
h = 
\left[
\begin{matrix}
2 & -1 \\
-1 & 1
\end{matrix}
\right].
\end{equation}
Deviation from conjugacy will increase from zero when $\alpha,\beta \neq 1$. 

Conveniently, systems $f$ and $g$ also have analytical Koopman operators. From \cite{brunton_invariant}, with Koopman observable vector given by $\Psi_f = \left[ \begin{matrix}x_1 & x_2 & x_1^2\end{matrix}\right]^T$, the Koopman operator for $f$ is
\begin{equation}
\label{K_f_example}
K_f = 
\left[
\begin{matrix}
\mu & 0 & 0 \\
0 & \lambda & -\lambda \\
0 & 0 & 2\mu
\end{matrix}
\right].
\end{equation}
Proceeding in a similar manner as in \cite{brunton_invariant} with \\$\Psi_g = \left[ \begin{matrix}y_1 & y_2 & (y_1+y_2)^2\end{matrix}\right]^T$, the Koopman operator for system $g$ is
\begin{equation}
\label{K_g_example}
K_g = 
\left[
\begin{matrix}
2\alpha\mu-\beta\lambda & 2\alpha\mu-2\beta\lambda & \beta\lambda \\
\beta\lambda-\alpha\mu &2\beta\lambda-\alpha\mu & -\beta\lambda \\
0 & 0 & 2\alpha\mu
\end{matrix}
\right].
\end{equation}
Data is generated for $\lambda = -0.1 + 10 i$ and $\mu=-0.001 + i$ and a uniform grid over the parameters $\alpha$ and $\beta$ by varying them from 0.1 to 2 in increments of 0.01.  Discrete time data is generated for $N_{\Delta t}=1000$ time steps using a discretization of $\Delta t = 0.01$.  At conjugacy, the non-orthogonal transformation in $\Psi$-space between the two systems is
\begin{equation}
\label{Texample}
T = 
\left[
\begin{matrix}
-1 & -(0.813+0.002i) & 0.703+0.004i \\
0.5 & 0.813+0.002i  & -(0.703+0.004i) \\
0 & 0 & 0.25
\end{matrix}
\right].
\end{equation}

\begin{figure*}
\subfigure(a) { \includegraphics[width=0.3\textwidth]{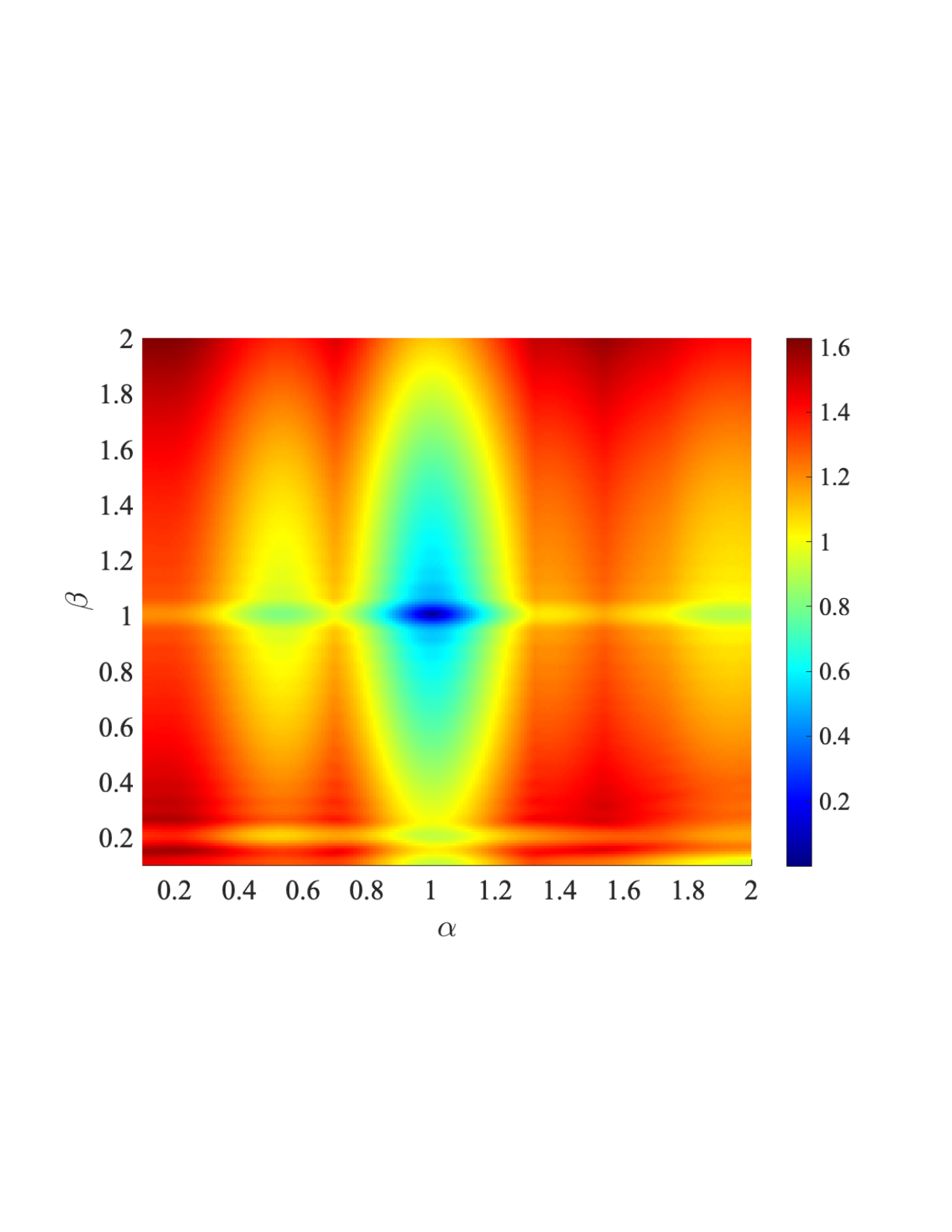}}
\subfigure(b) { \includegraphics[width=0.3\textwidth]{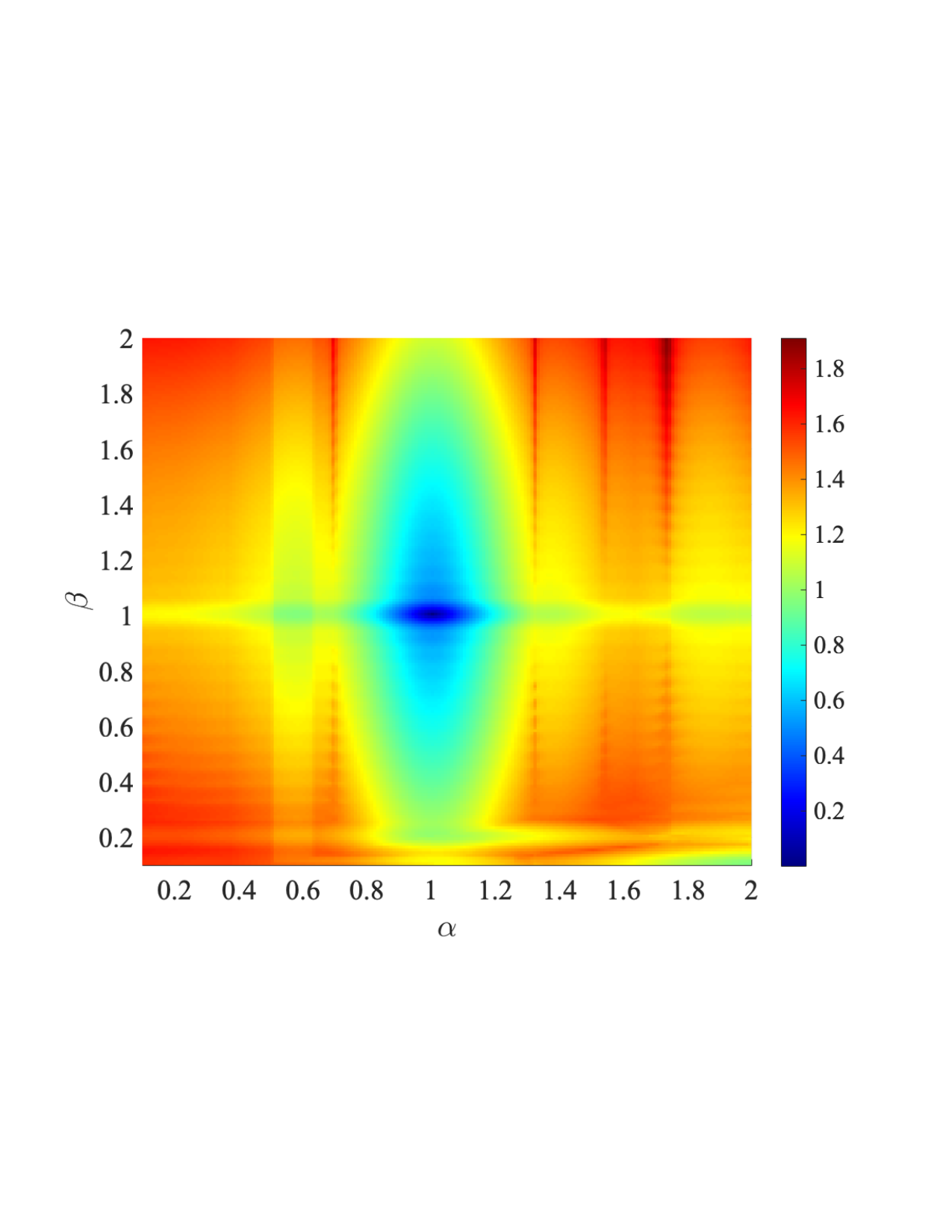}}
\subfigure(c) { \includegraphics[width=0.3\textwidth]{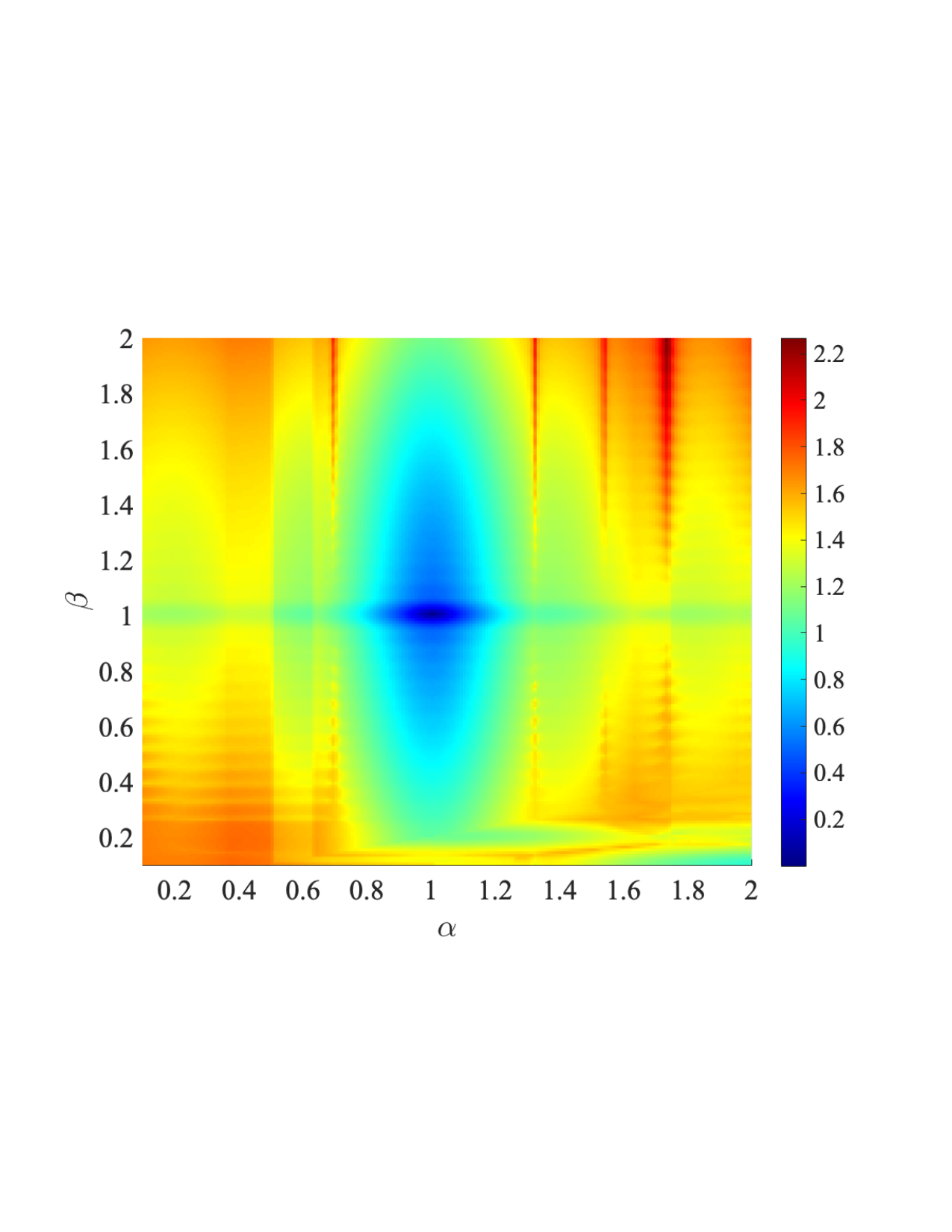}}
%\subfigure(b) { \includegraphics[width=0.3\textwidth]{X1_X0_TX0_0_0_Cr2.pdf}}
\caption{Deviations from conjugacy varied over $\alpha$ and $\beta$; (a) $d_{min}$ (b) $d_{avg}$ (c) $d_{max}$}
\label{dplots}       % Give a unique label
\end{figure*}
Deviation from conjugacies corresponding to normalized conjugacy residuals from Eqs. \ref{r1scaled} and \ref{r2scaled} with system $f$ serving as the reference system are plotted in Fig. \ref{dplots}.  All three deviaton from conjugacy pseudometrics report $d = 7e^{-13}$ demonstrating that topological conjugacy is recovered at $\alpha,\beta=1$. Computing $T_C$ via Eq. \ref{TfromC} using the unitary transformations $C_{r_{1,2}}$ recovers the non-unitary transformation matrix given by Eq. \ref{Texample}. This demonstrates that the pseudometrics based on Theorem \ref{unitary_opt_soln} and unitary transformation in $\Phi$-space are able to capture conjugacy between systems even if the original transformation in $\Psi$-space is not unitary.  This is illustrated in Fig. \ref{X1X0_conj} where trajectories based on $T_C$ corresponding to $C_{r_1}$ are compared to those transformed by $T_{LSQ}$ in Fig. \ref{X1X0_conj}. As expected,  Figs. \ref{X1X0_conj}(a) and (b) show the transformed trajectory $T\Psi_f$ exactly matches $\Psi_g$ when the systems are topologically conjugate.  Furthremore, $T_C=T_{LSQ}$ at topological conjugacy demonstrating that while stretching is not required in the $\Phi$-space transformations, it is recovered in the $\Psi$-space transformations. %However,  searching over unitary(or orthogonal) matrices in $\Psi$-space as in \cite{williams_shape,ostrow_DSA} may not be appropriate when the data originates from a dynamical system as such a search would not be able to capture topological conjugacy due to non-unitary(orthogonal) transformations between systems. 
\begin{figure*}
\subfigure(a) { \includegraphics[width=0.45\textwidth]{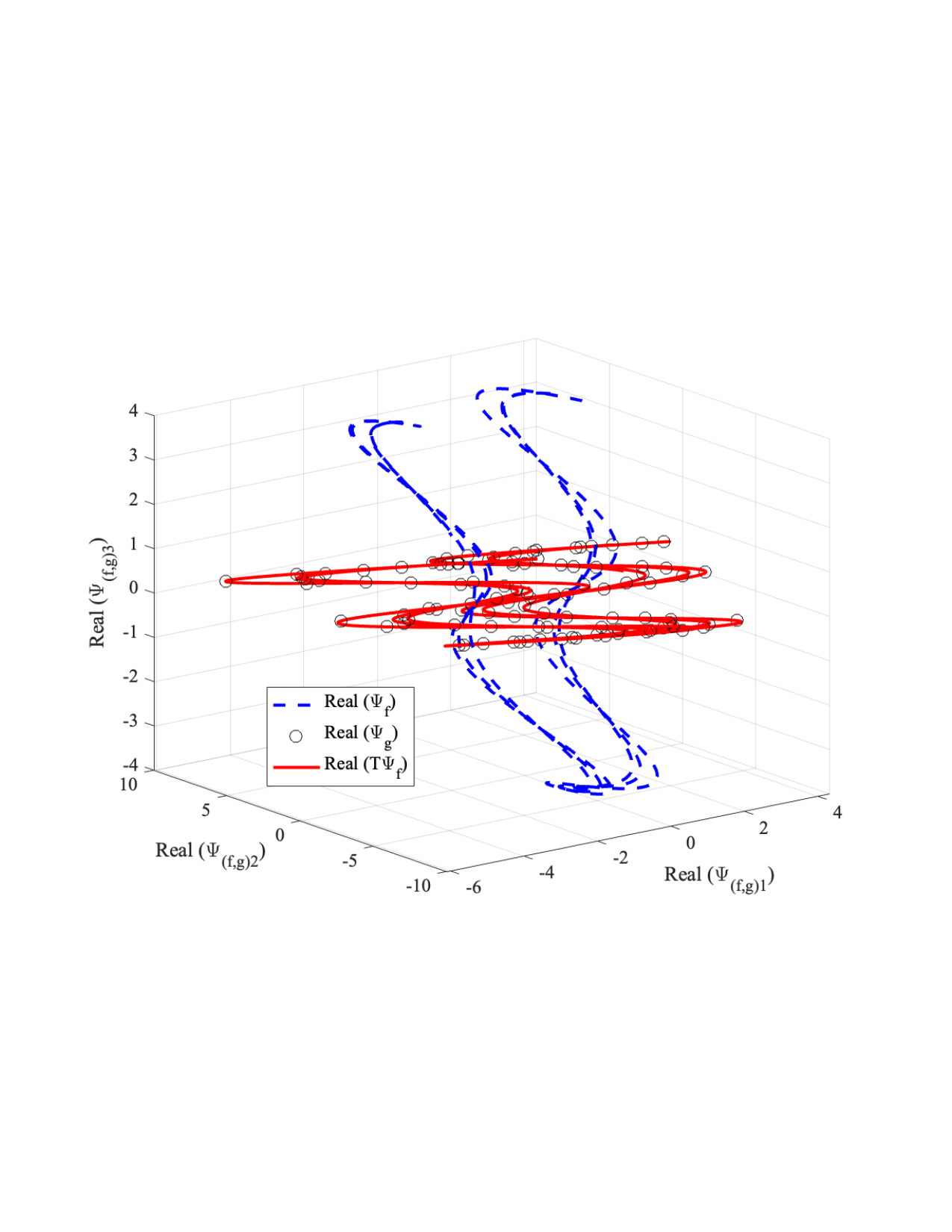}}
\subfigure(b) { \includegraphics[width=0.45\textwidth]{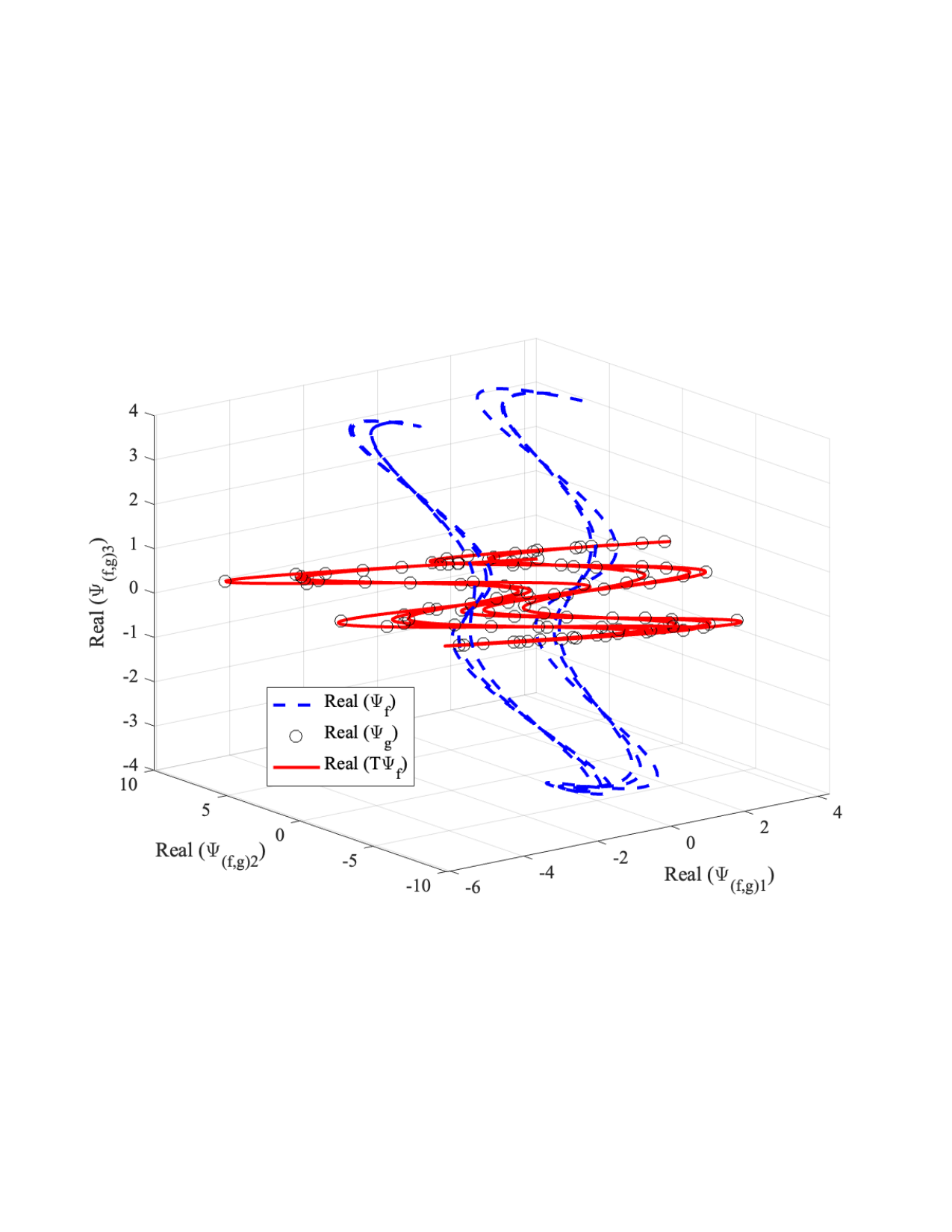}}
%\subfigure(c) { \includegraphics[width=0.45\textwidth]{Psi_04_1_T_Cr1.pdf}}
%\subfigure(d) { \includegraphics[width=0.45\textwidth]{Psi_04_1_T_lsqos.pdf}}
%\subfigure(e) { \includegraphics[width=0.45\textwidth]{Psi_173_95_T_Cr1.pdf}}
%\subfigure(f) { \includegraphics[width=0.45\textwidth]{Psi_173_95_T_lsqos.pdf}}
%\subfigure(b) { \includegraphics[width=0.3\textwidth]{X1_X0_TX0_0_0_Cr2.pdf}}
\caption{Comparisons of $\Psi_f$, $\Psi_g$, and transformed trajectory $T\Psi_f$ for $\alpha,\beta=1$; (a) $T_{C_{r_1}}$, (b)$T_{LSQ}$}
\label{X1X0_conj}       % Give a unique label
\end{figure*}
%Additional insights can be gained from the $\Psi$-space trajectories.  Trajectories based on $T_C$ corresponding to $C_{r_1}$ are compared to those transformed by $T_{LSQ}$ in Fig. \ref{X1X0_conj}.   It was also verified that $T_{C_{r_1}}$, $T_{C_{r_2}}$, and $T_{LSQ}$ all return Eq. \ref{Texample} at topological conjugacy. 
%Additional insights can be gained from the $\Psi$-space trajectories shown in Fig. \ref{X1X0_conj}.  Trajectories based on $T_C$ with $C_{r_1}$ are compared to those using $T_{LSQ}$. As expected,  Figs. \ref{X1X0_conj}(a) and (b) show the transformed trajectory $T\Psi_f$ exactly matches $\Psi_g$ when the systems are topologically conjugate.  It was also verified (not shown) that $T_C=T_{LSQ}$ (see Eq. \ref{Texample}) at topological conjugacy. Figures \ref{X1X0_conj}(c) and \ref{X1X0_conj}(e) show a case where considering $r_2$ only may indicate the systems are relatively similar, while $r_1$ indicates this is not the case.  Figure \ref{X1X0_conj}(c) shows moderate differences in the trajectory geometry while Fig.\ref{X1X0_conj}(e) shows large differences. In contrast, the deviations from conjugacy indicate these systems have moderate or large differences depending on the pseudometric. Note that basing $T$ on $C_{r_2}$ shows similar results (not shown).  

Figure \ref{dplots} illustrates that all three pseudometrics show similar trends for deviation from conjugacy as one moves away from $\alpha,\beta=1$. The main difference is that $d_{min}$ has smaller regions of moderate deviations from conjugacy, while $d_{max}$ has larger such regions and $d_{avg}$ falls in between. This indicates that selection of which pseudometric to use should be made according to the preference of the user. For instance, if the user wishes to cluster/classify designs in a more binary fashion, i.e. systems are either far or close to each other then the user should select $d_{min}$. On the other hand if the user wishes a more conservative criterion that would lead to larger groupings of systems based on their deviations from conjugacy, then $d_{max}$ should be chosen, while $d_{avg}$ would fall in between.

% For tables use
%\begin{table}
%% table caption is above the table
%\caption{Please write your table caption here}
%\label{tab:1}       % Give a unique label
%% For LaTeX tables use
%\begin{tabular}{lll}
%\hline\noalign{\smallskip}
%first & second & third  \\
%\noalign{\smallskip}\hline\noalign{\smallskip}
%number & number & number \\
%number & number & number \\
%\noalign{\smallskip}\hline
%\end{tabular}
%\end{table}

Figure \ref{r1_r2_alpha_beta} shows conjugacy residuals when optimizing for each individually and demonstrates the importance of treating deviation from conjugacy as a Pareto optimality problem.  It is clear from Figs. \ref{r1_r2_alpha_beta}(a), and \ref{r1_r2_alpha_beta}(b) that comparing systems based only on $r_1$ or only on $r_2$ would lead to qualitatively different conclusions on dissimilarity between systems. For instance,  from Fig. \ref{r1_r2_alpha_beta}(b) the operator residual $r_2(C_{r_2})$ alone may lead one to conclude that all systems $g$ with $\beta$ close to one are relatively similar to system $f$. However, Fig. \ref{r1_r2_alpha_beta}(a) which accounts for differences in trajectory geometry indicates that this may not be the case depending on $\alpha$.  Since $\alpha$ multiplies $\mu$, it affects two of the three eigenvalues of $K_g$. Thus it is expected that it should impact whether system $g$ is close or far from $f$. Since the deviations from conjugacy account for both residuals, this expected result is captured in Fig. \ref{dplots}.  Similar conclusions can be made about comparisons based only on $r_1$ without accounting for $r_2$. Furthermore, Fig. \ref{r1_r2_alpha_beta}(c) demonstrates that the single objective optimal unitary transformation solutions are not the same in general and thus optimizing for one residual may lead to considerable increases in the other.
\begin{figure*}
\subfigure(a) { \includegraphics[width=0.3\textwidth]{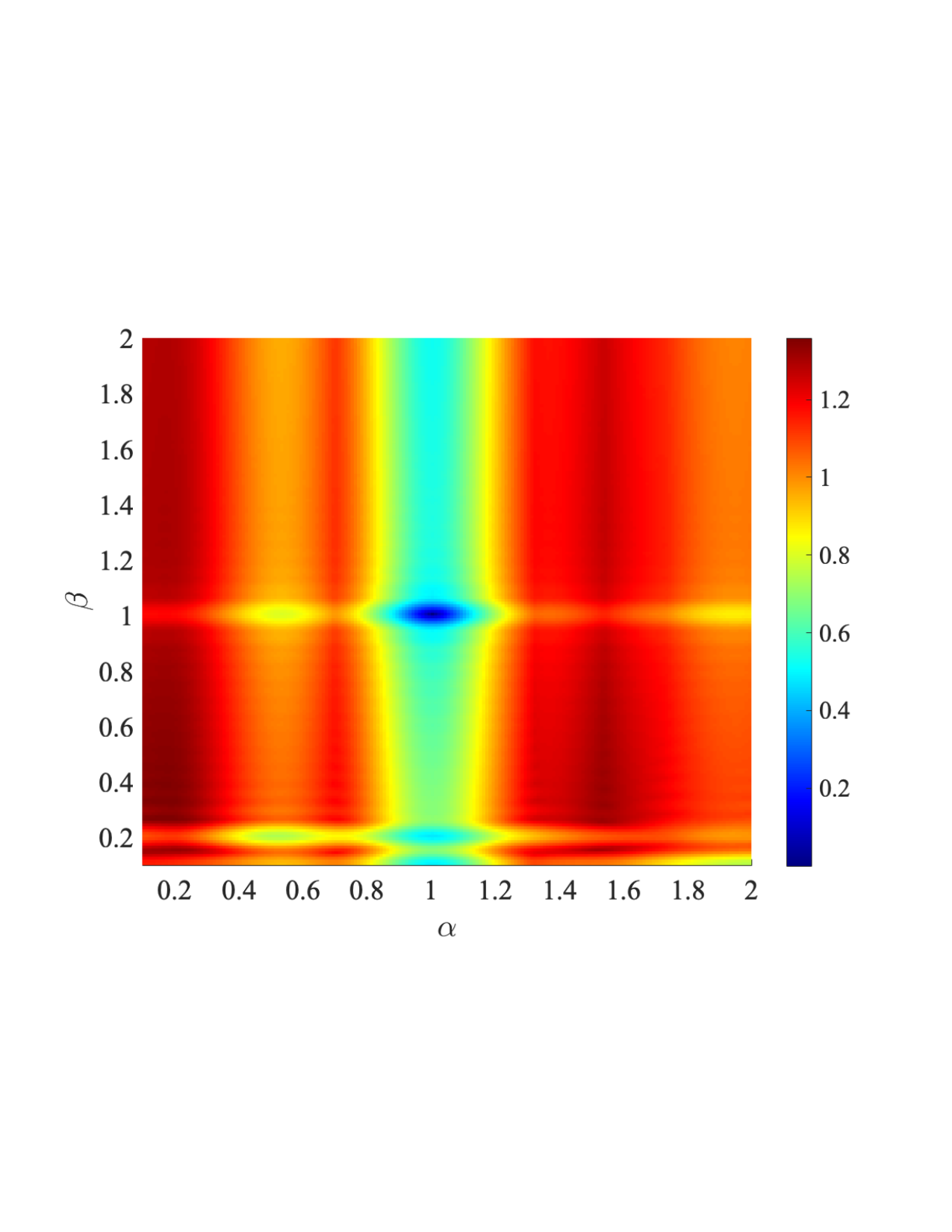}}
\subfigure(b) { \includegraphics[width=0.3\textwidth]{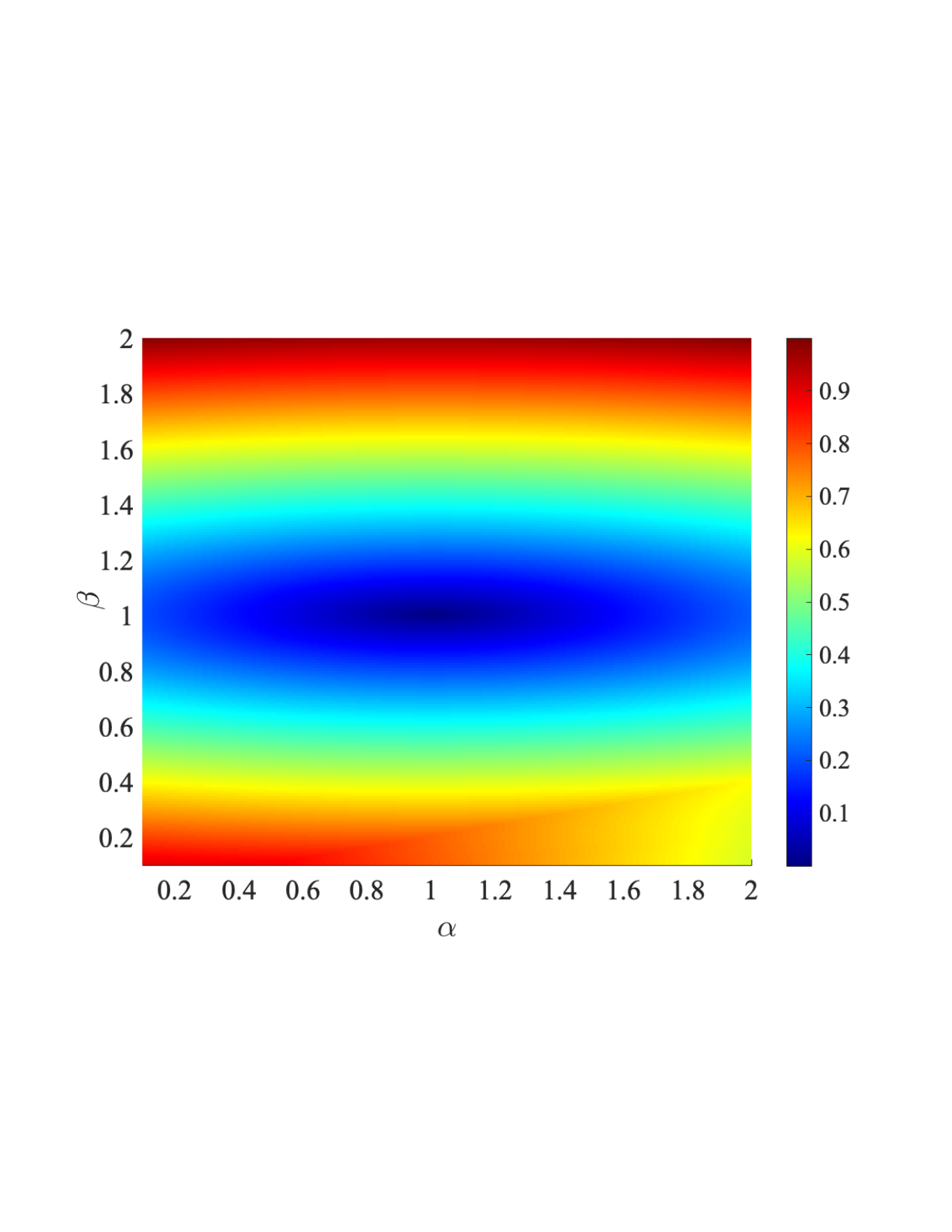}}
\subfigure(c) { \includegraphics[width=0.3\textwidth]{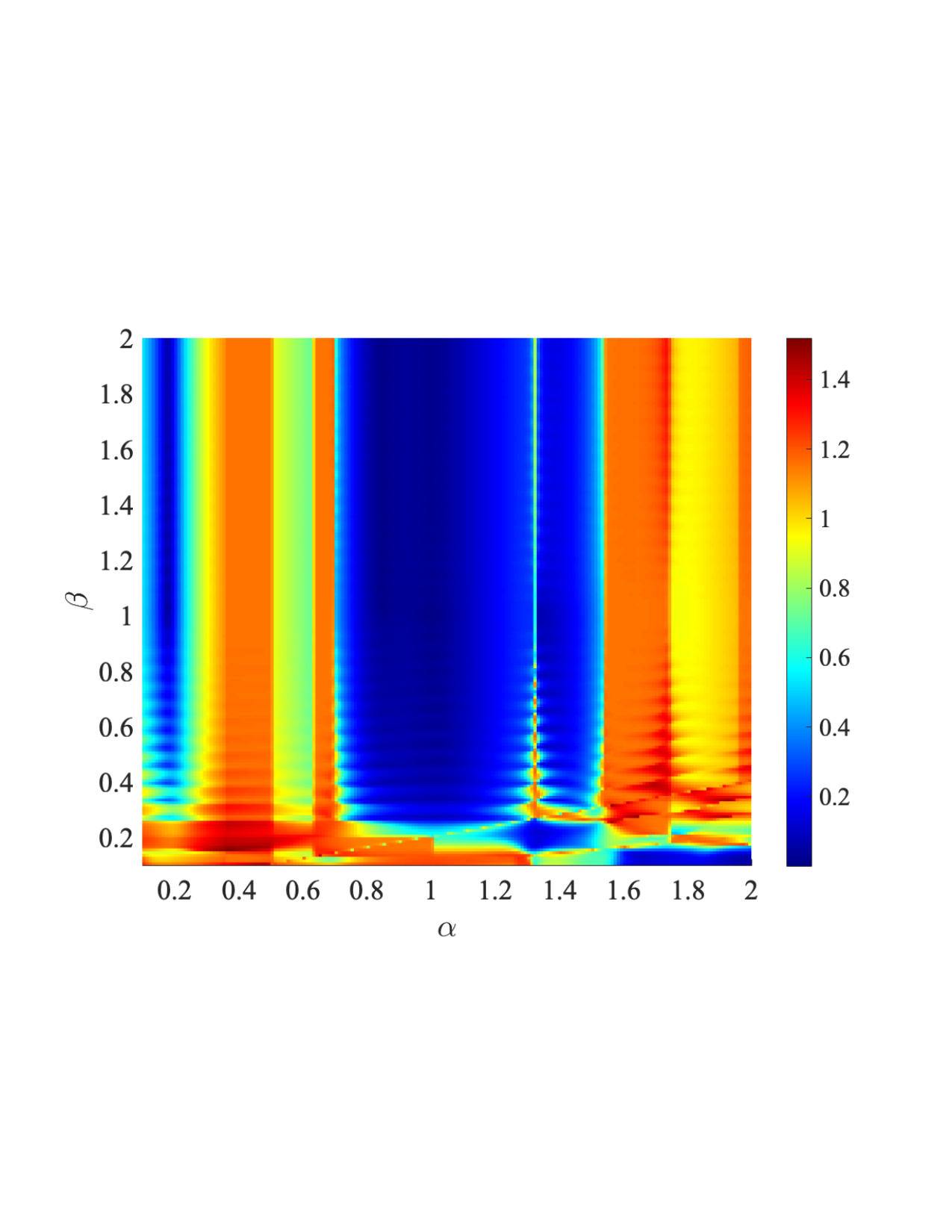}}
%\subfigure(b) { \includegraphics[width=0.3\textwidth]{X1_X0_TX0_0_0_Cr2.pdf}}
\caption{Conjugacy residuals when optimized independenty and comparison between optimal solutions $C_{r_1}$ and $C_{r_2}$; (a) $r_1(C_{r_1})$, (b) $r_2(C_{r_2})$, (c) $\left\Vert C_{r_1} - C_{r_2} \right\Vert_F/\left\Vert C_{r_2} \right\Vert_F$}
\label{r1_r2_alpha_beta}       % Give a unique label
\end{figure*}

\begin{figure*}
\subfigure(a) { \includegraphics[width=0.45\textwidth]{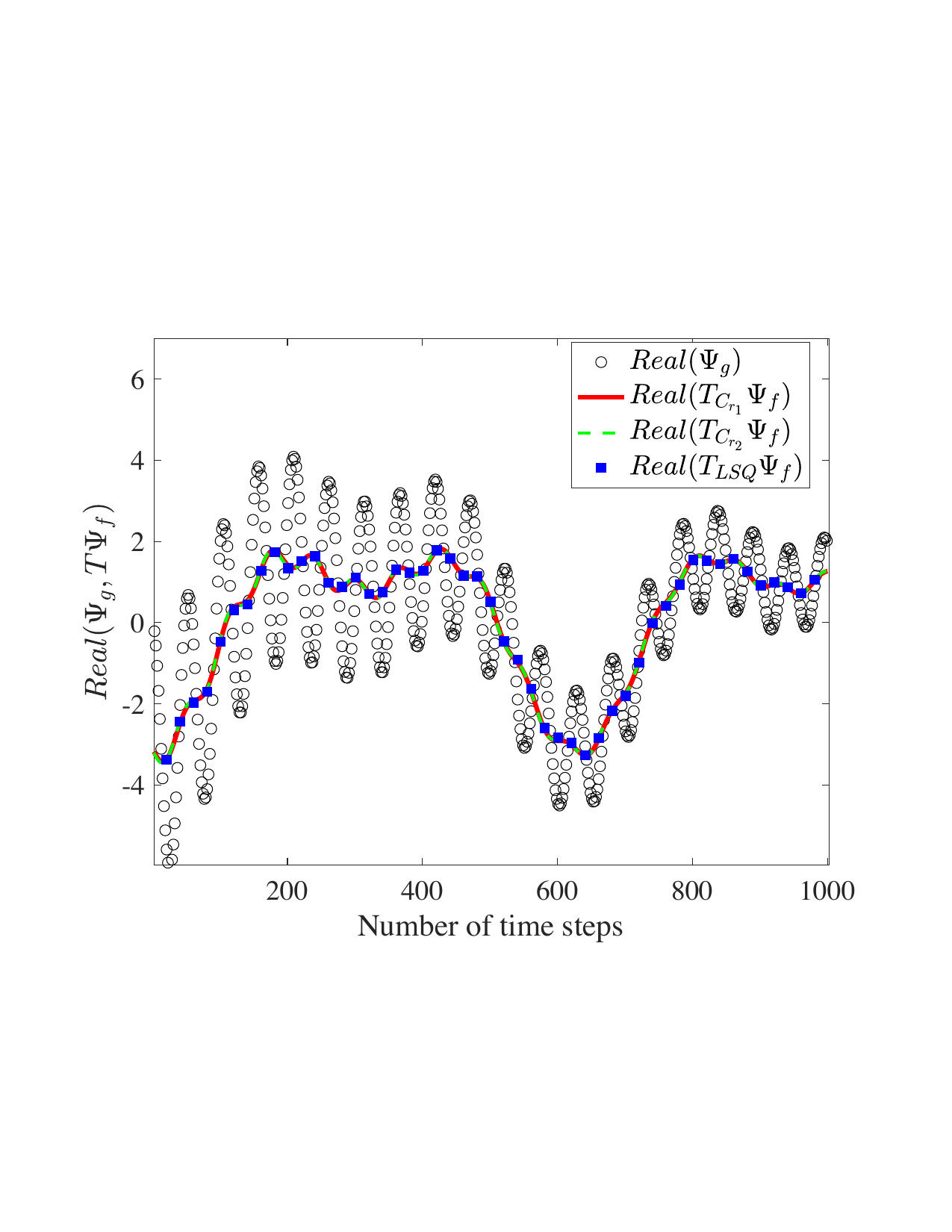}}
\subfigure(b) { \includegraphics[width=0.45\textwidth]{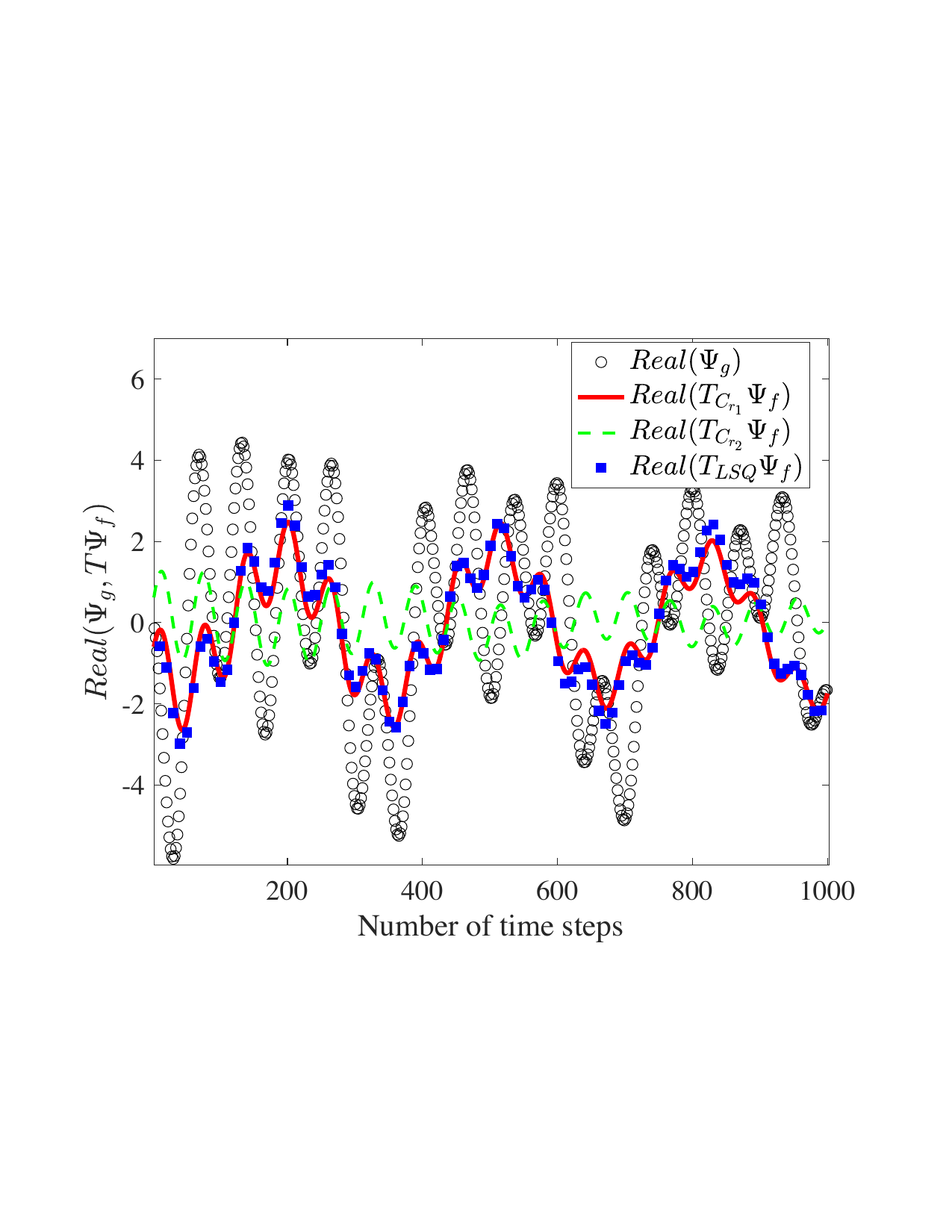}}
%\subfigure(c) { \includegraphics[width=0.3\textwidth]{Psi1_a185_b94.pdf}}
%\subfigure(d) { \includegraphics[width=0.45\textwidth]{Psi_04_1_T_lsqos.pdf}}
%\subfigure(e) { \includegraphics[width=0.45\textwidth]{Psi_173_95_T_Cr1.pdf}}
%\subfigure(f) { \includegraphics[width=0.45\textwidth]{Psi_173_95_T_lsqos.pdf}}
%\subfigure(b) { \includegraphics[width=0.3\textwidth]{X1_X0_TX0_0_0_Cr2.pdf}}
\caption{Comparisons of the $y_1$ component of $\Psi_g$, and transformed trajectory $T\Psi_f$; (a) $\alpha = 1,\beta=1.2$, (b) $\alpha = 1.85,\beta= 0.94$}
\label{psi_far}       % Give a unique label
\end{figure*}
Additional insight can be gained through visualization of the trajectories. For instance consider the two cases illustrated in Fig. \ref{psi_far}.  The operator residual $r_2(C_{r_2})$ for both cases in Figs. \ref{psi_far}(a) and \ref{psi_far}(b) is 0.20.  For reference, the least squares solution $T_{LSQ} = \Psi_g\Psi_f^{\dagger}$ which is the best possible solution in $GL(N_{\Psi})$ from a trajectory geometry perspective,is provided. In the case shown in Figs. \ref{psi_far}(a), the optimal operator based solution $C_{r_2}$and optimal trajectory geometry solution $C_{r_1}$ lead to nearly identical results.  In this case, it would not matter whether conclusions about dissimilarity are made by using $r_1$ or $r_2$. In contrast, Fig. \ref{psi_far}(b) illustrates a case in which the optimal solutions for $r_1$ and $r_2$ are different.  The trajectory residuals are $r_1(C_{r_1}) = 1.03$ and $r_1(C_{r_2}) = 1.28$ for this case whereas the residuals for the case depicted in Fig. \ref{psi_far}(a) are $r_1(C_{r_1}) = r_1(C_{r_2}) = 0.55$.  So while accounting for $r_2$ alone may lead one to conclude that the cases $\alpha = 1,\beta=1.2$ and $\alpha = 1.85,\beta= 0.94$ are equally dissimilar to the case $\alpha,\beta = 1$, accounting for both $r_1$ and $r_2$ instead indicates that the system corresponding to $\alpha = 1.85,\beta= 0.94$ is more dissimilar to the $\alpha,\beta = 1$ system compared to $\alpha = 1,\beta=1.2$.  This is confirmed in the deviation from conjugacy pseudmoetrics which are $d_{min} = d_{avg} = d_{max}=0.58$ for the $\alpha = 1,\beta=1.2$ case and $d_{min}=1.05$, $d_{avg}=1.24$, and $d_{max}=1.31$ for the $\alpha = 1.85,\beta=0.94$ case. 

\begin{figure*}
\subfigure(a) { \includegraphics[width=0.45\textwidth]{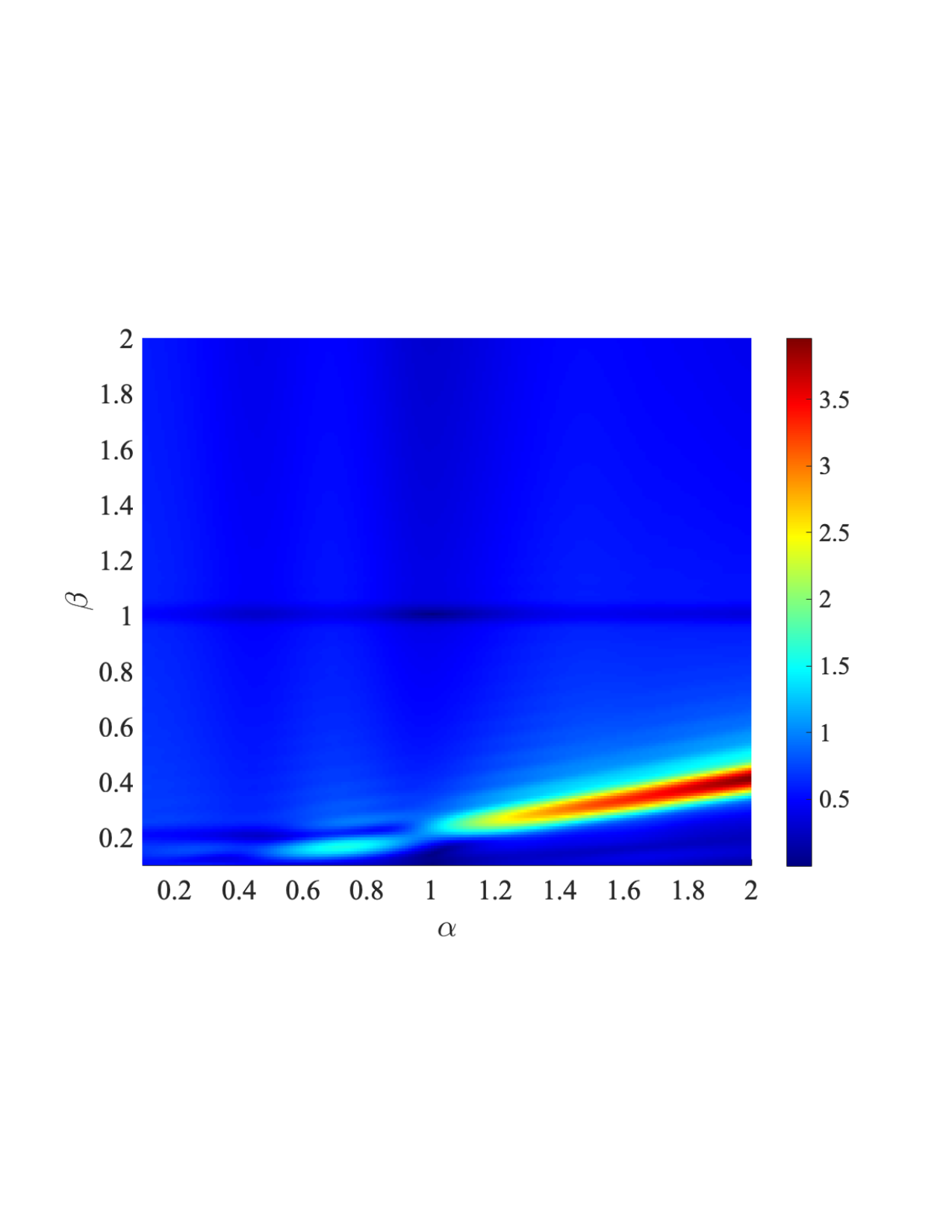}}
\subfigure(b) { \includegraphics[width=0.45\textwidth]{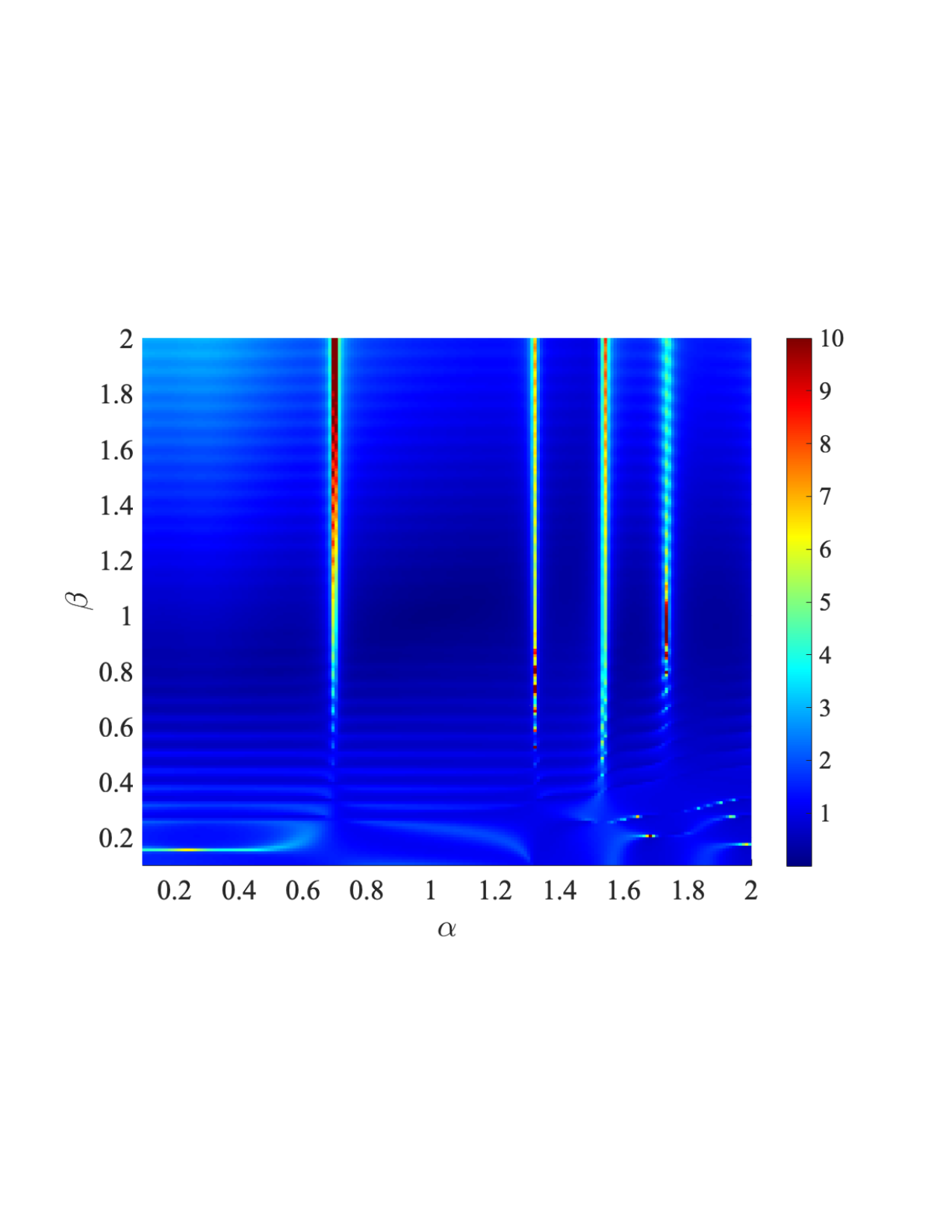}}
%\subfigure(b) { \includegraphics[width=0.3\textwidth]{X1_X0_TX0_0_0_Cr2.pdf}}
\caption{$\Psi$-space residuals with $T_{LSQ}$ varied over $\alpha$ and $\beta$; (a) residual given by Eq. \ref{traj_conj}, (b) residual given by Eq. \ref{sys_conj}}
\label{lsq_os_alpha_beta}       % Give a unique label
\end{figure*}
As a an efficient alternative to computing the deviation from conjugacy pseudometrics one may be tempted to consider the least squares solution given by $T_{LSQ} = \Psi_g\Psi_f^{\dagger}$. The $\Psi$-space residuals associated with Eqs. \ref{sys_conj} and \ref{traj_conj} are shown in Fig. \ref{lsq_os_alpha_beta}. While both residuals return zero at conjugacy ($1e^{-11}$ and $2e^{-13}$), they are susceptible to large variations for small changes in $\alpha,\beta$. This can be seen from the large regions of relatively low residual broken up by streaks of high residual.  In contrast, deviations from conjugacy shown in Fig. \ref{dplots} capture more gradual variation with $\alpha,\beta$, and thus the expected result that a system with $\alpha = \alpha_0$, $\beta = \beta_0$ is similar to a system with $\alpha = \alpha_0+\epsilon$, $\beta = \beta_0+\epsilon$, where $\epsilon$ is small. 

It is interesting to note that while neither transformation results in particularly accurate representations of $\Psi_g$ in Figs. \ref{psi_far}(a) and \ref{psi_far}(b), the transformation $T_{C_{r_1}}$ is quite close to the best possible solution (from a trajectory perspective) in $GL(N_{\Psi})$ given by $T_{LSQ}$. In fact trajectory error given by Eq. \ref{traj_conj} for $T_{C_{r_1}}$ is only $0.01 \%$ higher than the least squares case. However, as a tradeoff the operator error in $\Psi$-space given by Eq. \ref{sys_conj} for $T_{LSQ}$ is $48 \%$ higher than the $T_{C_{r_1}}$ case. Since the least squares solution focuses only on the trajectory geometry, it has no direct mechanism to account for the dynamics that generated the trajectory while the deviation from conjugacy pseudometrics do. As a result, least squares solutions in $\Psi$-space are susceptible to obtaining small decreases in trajectory error at the expense of large increases in operator error.

%Although there are differences in trajectory geometry, the unitary solution in $\Psi$-space shown in Fig. \ref{X1X0_conj}(c) leads to a similar solution as the case depicted in Fig. \ref{X1X0_conj}(d) which corresponds to the best possible solution for minimizing Eq. \ref{traj_conj} independently. In fact trajectory error given by Eq. \ref{traj_conj} for $T_C$ is only $2\%$ higher than the least squares case, while Eq. \ref{sys_conj} for $T_C$ is $67\%$ lower than Eq. \ref{sys_conj} based on $T_{LSQ}$. Similar results occur for the conditions shown in Figs. \ref{X1X0_conj}(e) and (f) where the trajectory residual based on $T_C$ is only $0.6\%$ higher than the least squares case while the system/operator residual from Eq. \ref{sys_conj}  is $99.6\%$ lower when using $T_C$ compared to $T_{LSQ}$. Since the least squares solution focuses only on the trajectory geometry, it has no direct mechanism to account for the dynamics that generated the trajectory while the deviation from conjugacy pseudometrics do. As a result, least squares solutions in $\Psi$-space are susceptible to seeking small decreases in Eq. \ref{traj_conj} at the expense of large increases in Eq. \ref{sys_conj}.

\subsection{Engineering example}

As an engineering example, morphological computation \cite{pfeifer_bongard} of a single degree of freedom mechanical actuator driven by a controller to achieve stable hopping is considered.  Morphological computation is a measure of how much the body can offload the required information processing of the controller.  Systems with high morphological computation will require less information to be processed by a control system in order to achieve a task compared to a system with low morphological computation.  In particular,  actuation and control models representative of biological muscle and DC motor actuators are compared \cite{MCt,haeufle_models}. The quantification of state-dependent morphological computation from \cite{MCt} is used because the dynamics were shown to provide more insight than time averaged quantities.  Since morphological computation is a nonlinear state-dependent quantity it can be treated as a Koopman observable.

Using the simple model from  \cite{haeufle_hopping}, hopping is modeled as
\begin{equation}
\label{hopping}
m \ddot{y} = -mg +
\left\{
\begin{matrix}
0 & & y > l_0 \; \mathrm{flight \; phase} \\
F_L(y,\dot{y},u) & &  y \le l_0 \; \mathrm{ground \; contact}
\end{matrix}
\right.
\end{equation} 
where $y$ is the vertical displacement, $m$ is the system mass,  $F_L$ is the representative leg force,  $u$ is the control input, and $l_0$ is the leg's uncontracted length. For the nonlinear biological muscle system, $F_L$ has a Hill-type force-displacement and force-velocity relationship,  with control input $u$ modeled as a simple force-feedback model representative of neural muscle stimulation.  For the DC motor driven actuator, $F_L$ is modeled as a linear function of the electrical characteristics of the motor. The control input in the DC motor case is obtained via proportional-derivative control to ensure that the DC motor $y,\dot{y}$ trajectories follow those of the nonlinear muscle. A linearized muscle system with linearized force-velocity relationship and the same force feedback control law as the nonlinear muscle is also considered. The latter two systems can be viewed as man-made or bio-inspired attempts to replicate nonlinear muscle, where the DC motor system is an attempt to directly mimic the behavior of the nonlinear biological muscle system while the linearized muscle can be viewed as an attempt to approximate the material properties of the nonlinear muscle. All actuators were tasked with stable hopping at the same height. See \cite{MCt} for full details of the actuator models and parameters. 

State-dependent morphological computation is modeled as \cite{MCt}
\begin{equation}
\label{MCstate}
MC(y,\dot{y},u) = I(w_{n+1};w_n)-I(u;s),
\end{equation}
where $I(w_{n+1};w_n)$ is the mutual information of world states $w$ at consecutive time steps and $I(u;s)$ is the mutual information between actuator and sensor states. Note that mutual information is given by
\begin{equation}
\label{MI}
I(x,y) = -\sum_x p(x) \log_2 p(x) + \sum_{x,y} p(x,y) \log_2 p(x\vert y)
\end{equation}
and the probability terms $p(x)$ and $p(x,y)$ are computed by discretizing the data into $B_y$ bins and calculating the relative number of occurrences for each value. The world states are given by $w = y + B_y \dot{y} + B_y^2 \ddot{y}$ \cite{MCt}.  For the nonlinear muscle system, the sensor state is $F_L$, while for the DC motor system the sensor states are computed from $y$ and $\dot{y}$.The first term in Eq. \ref{MCstate} is high if the system shows diverse but non-random dynamics while the second term is low if the system's control policy has a low diversity in it's outputs $u$ or there is low correlation between sensor states and actuator states. Therefore Eq. \ref{MCstate} is high if the system can produce complex behavior based on a controller with low complexity \cite{MCt}.

The primary Koopman observables are chosen to be \\$\Psi_{\mathrm{primary},n} = \left[\begin{matrix}  y_n & \dot{y}_n & u_n & MC_n)\end{matrix}\right]^T$.   The Koopman operators were calculated from training data corresponding to $N_{\Delta t} = 2500$ which is aproximately 10 periods of hopping. An additional 1000 time-steps of test data was used to verify the accuracy of the trained models.

The data-driven Koopman predictions of $\Psi_{\mathrm{primary}}$ are compared to the 1000 time-steps of test data in Figs. \ref{koop_nlm} - \ref{koop_lm}.  The data-driven Koopman models accurately capture the system observables, including the highly nonlinear morphological computation observable. Since the controller for each actuator was set to drive the system to hop at the same height, $y$ and $\dot{y}$ are similar for all models. However, the control signals needed to achieve this are quite different as shown in Figs. \ref{koop_nlm}(c), \ref{koop_dc}(c), and \ref{koop_lm}(c). It can also be seen that the dynamics of morphological computation are different for the nonlinear muscle compared to the other systems. Perhaps the most striking difference in Fig. \ref{koop_nlm}(d) compared to Figs. \ref{koop_dc}(d) and \ref{koop_lm}(d) is that morpholigcal computation quickly recovers from low values for the nonlinear muscle. When control is activated, (or turned off) morphological computation sharply decreases since the $I(u;s)$ term in Eq. \ref{MCstate} becomes more important. However, the body dynamics quickly take over in the nonlinear muscle case and morphological computation is able to sharply recover even while control is varying. This dynamical behavior is not present in the `man-made' systems and also would not be captured if only considering the time-average of morphologcal computation.
\begin{figure*}
\subfigure(a) {\includegraphics[width=0.48\textwidth]{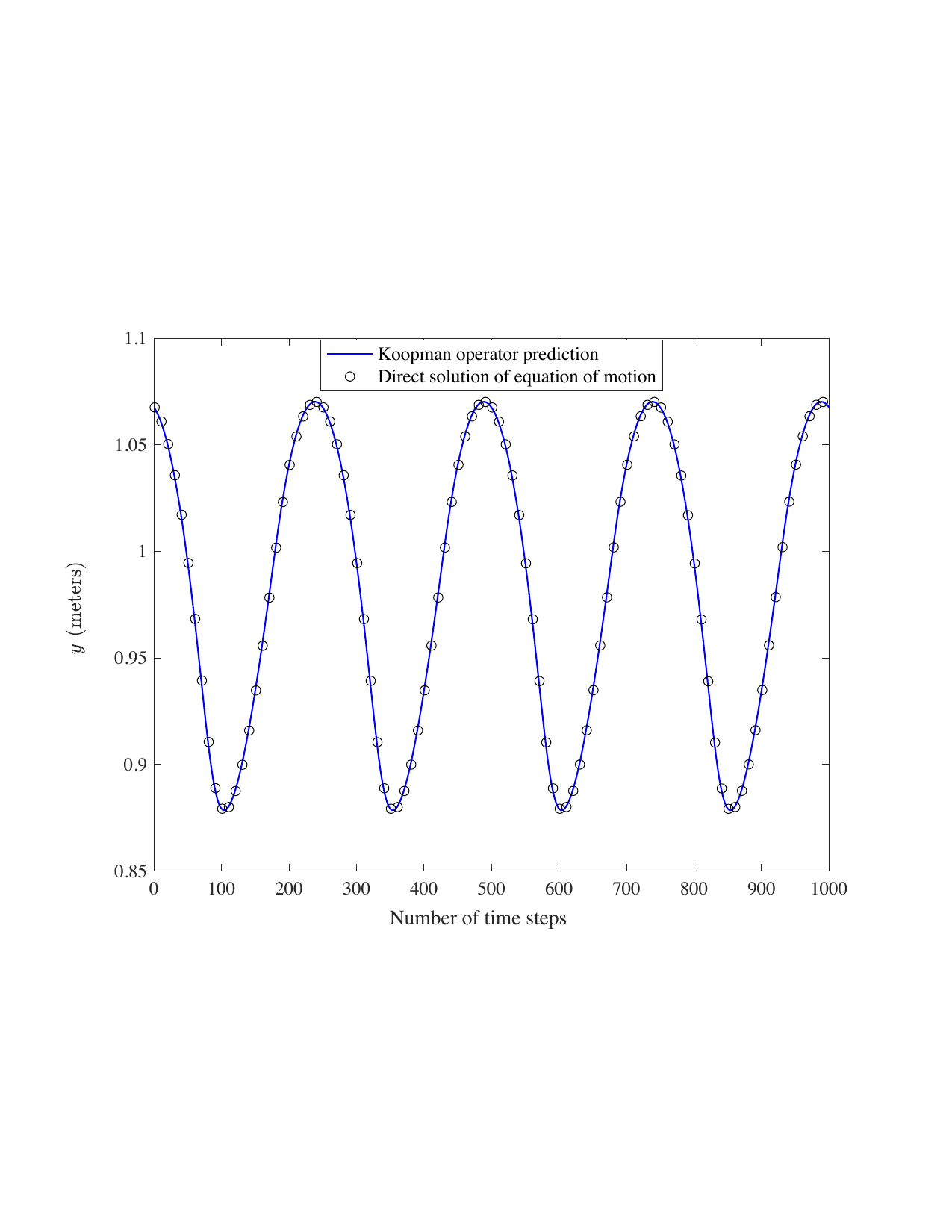}}
\subfigure(b) {\includegraphics[width=0.48\textwidth]{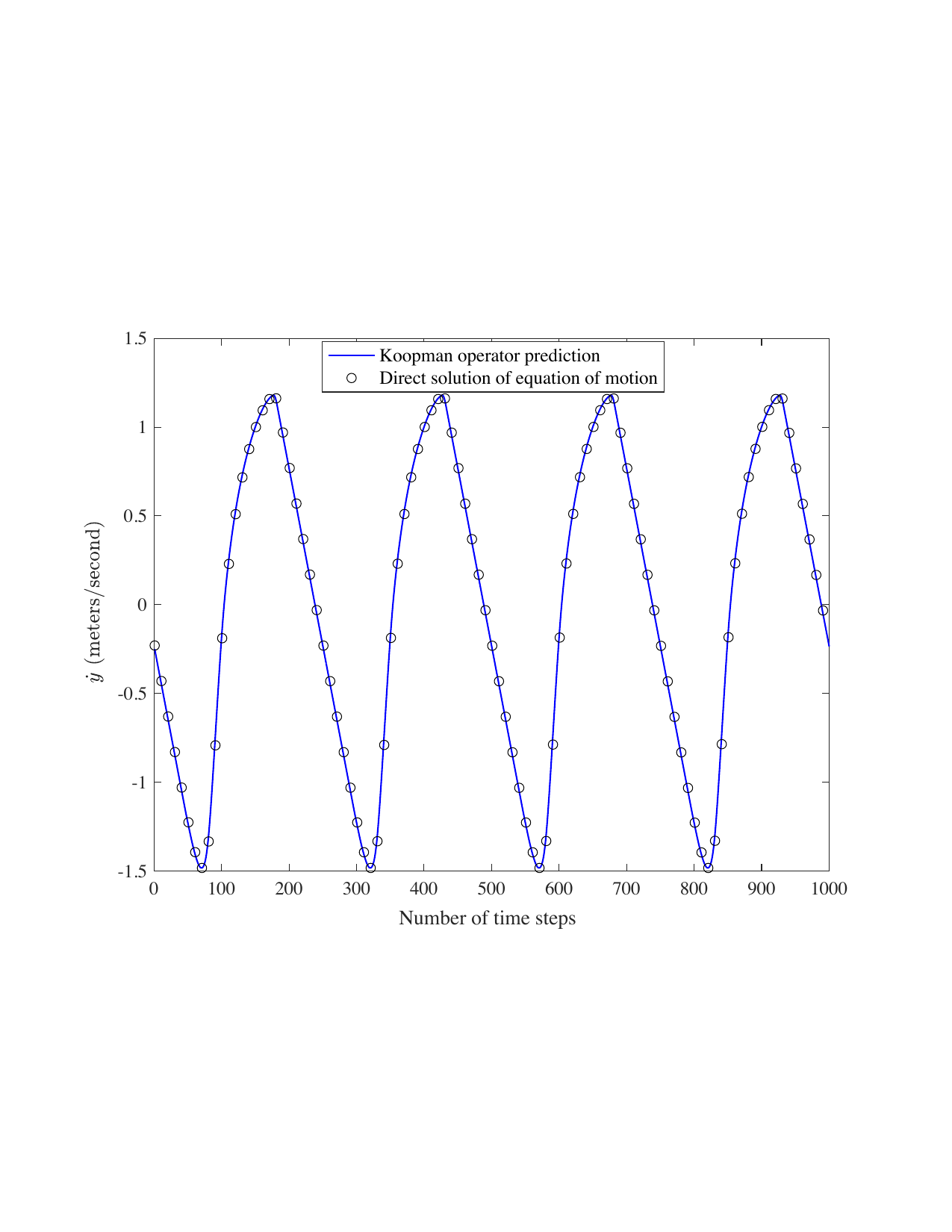}}
\subfigure(c) {\includegraphics[width=0.48\textwidth]{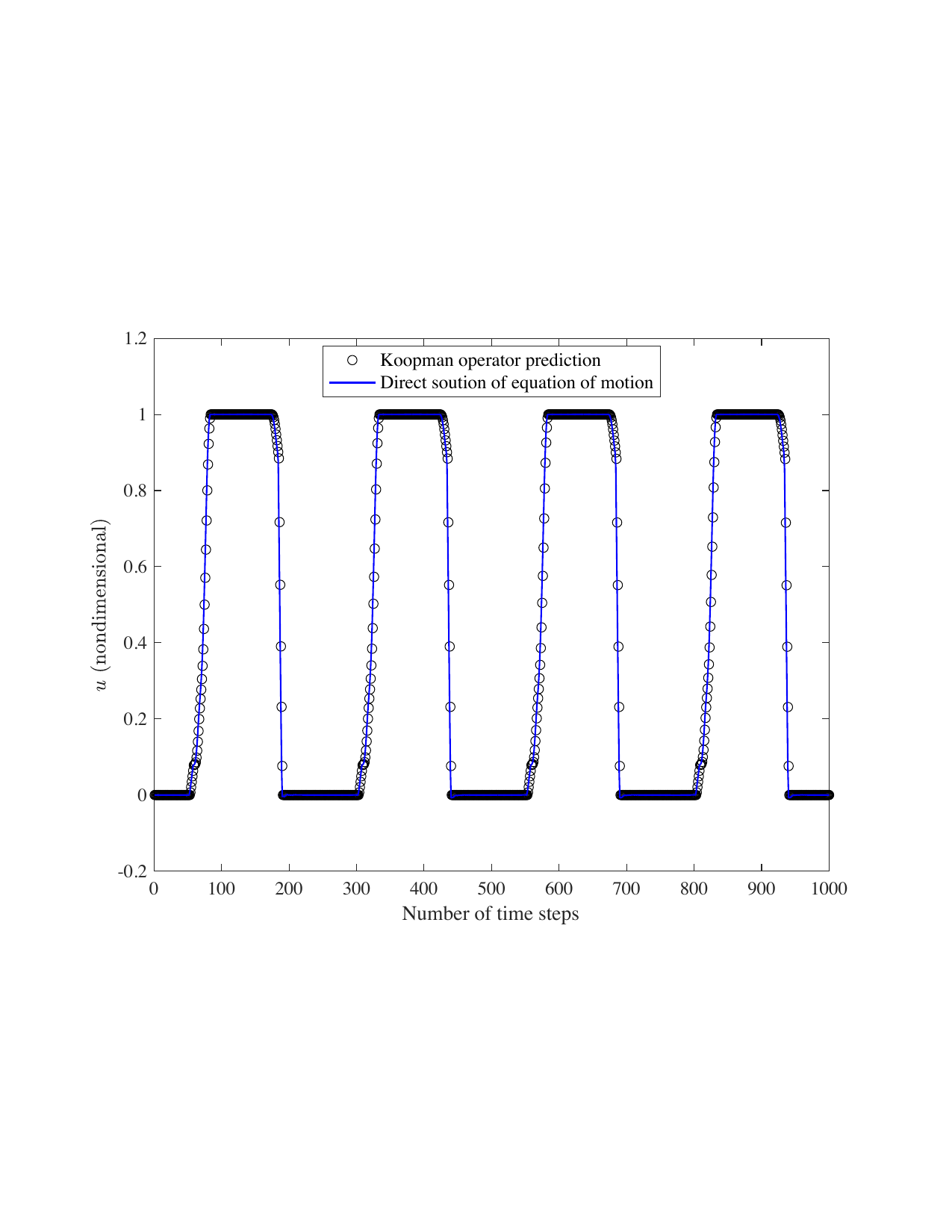}}
\subfigure(d) {\includegraphics[width=0.48\textwidth]{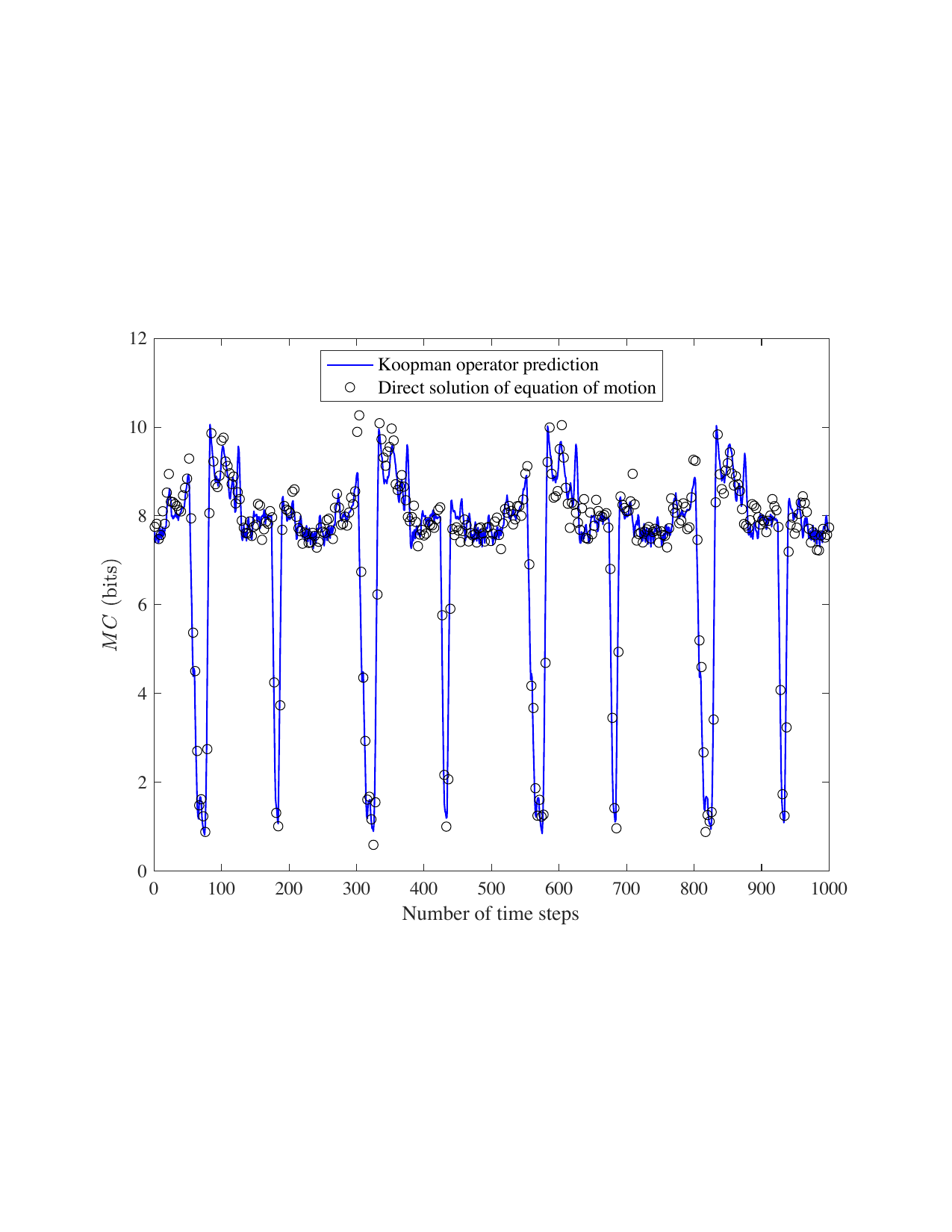}}
%\subfigure(b) { \includegraphics[width=0.3\textwidth]{X1_X0_TX0_0_0_Cr2.pdf}}
\caption{Koopman operator predictions vs direct solutions to equations of motion for the nonlinear muscle actuator}
\label{koop_nlm}       % Give a unique label
\end{figure*}

\begin{figure*}
\subfigure(a) {\includegraphics[width=0.48\textwidth]{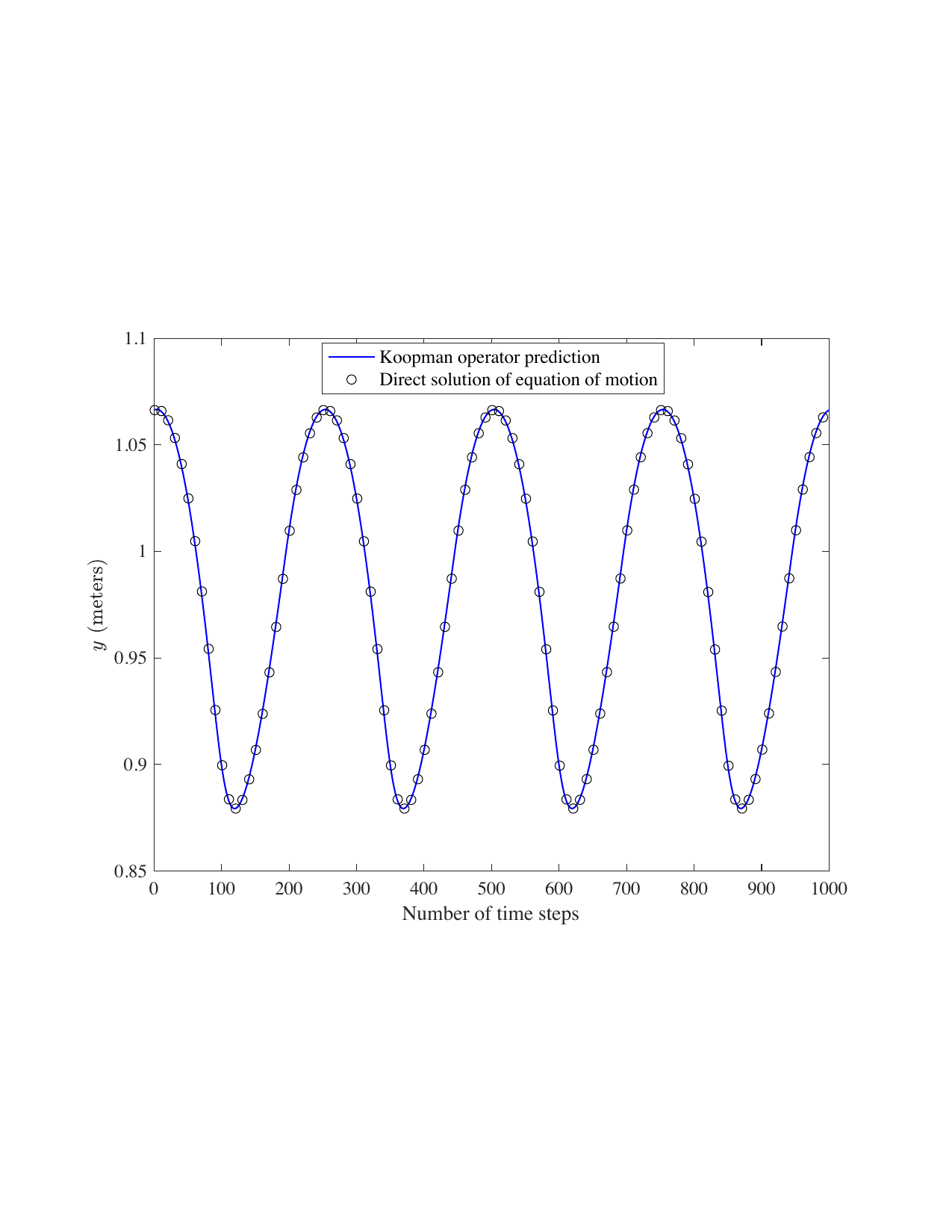}}
\subfigure(b) {\includegraphics[width=0.48\textwidth]{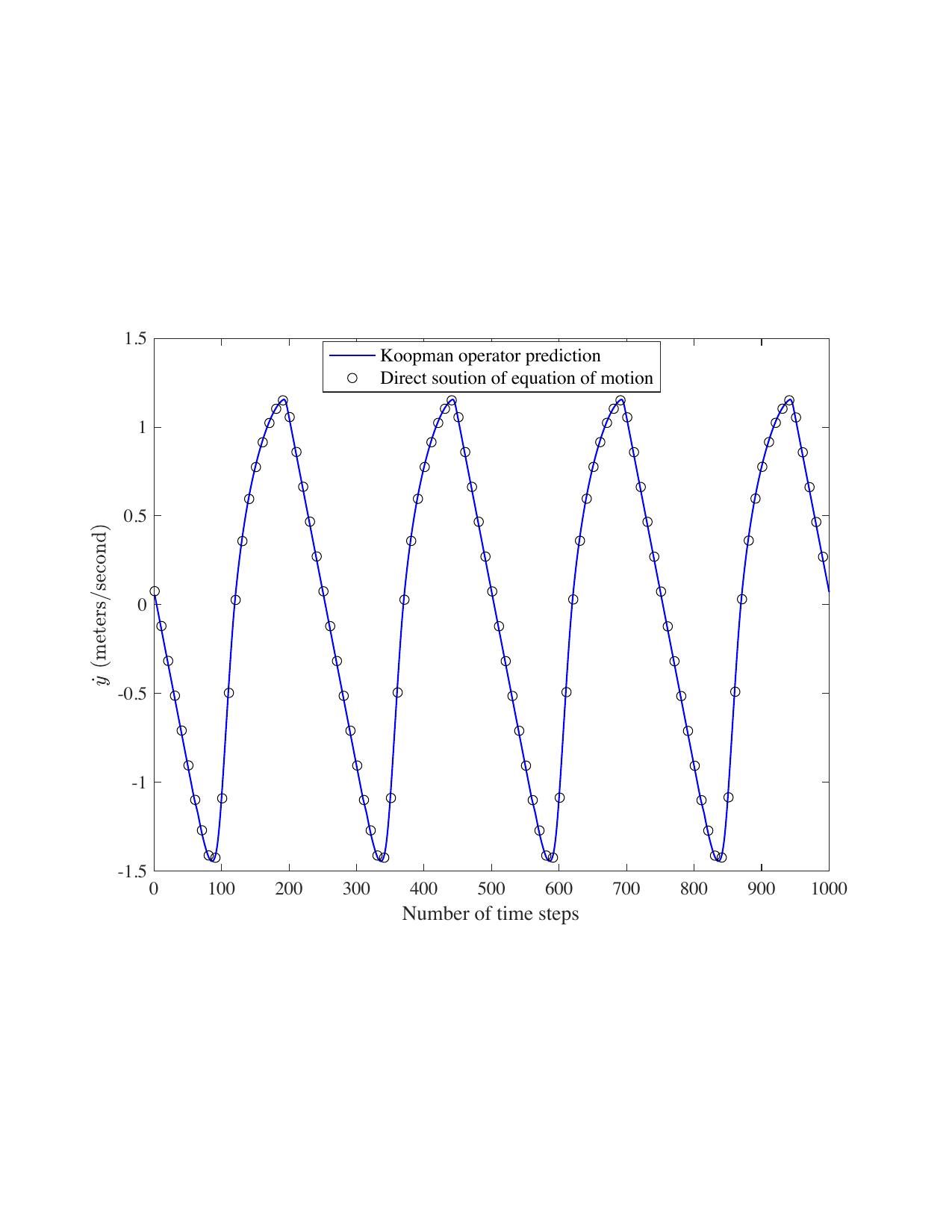}}
\subfigure(c) {\includegraphics[width=0.48\textwidth]{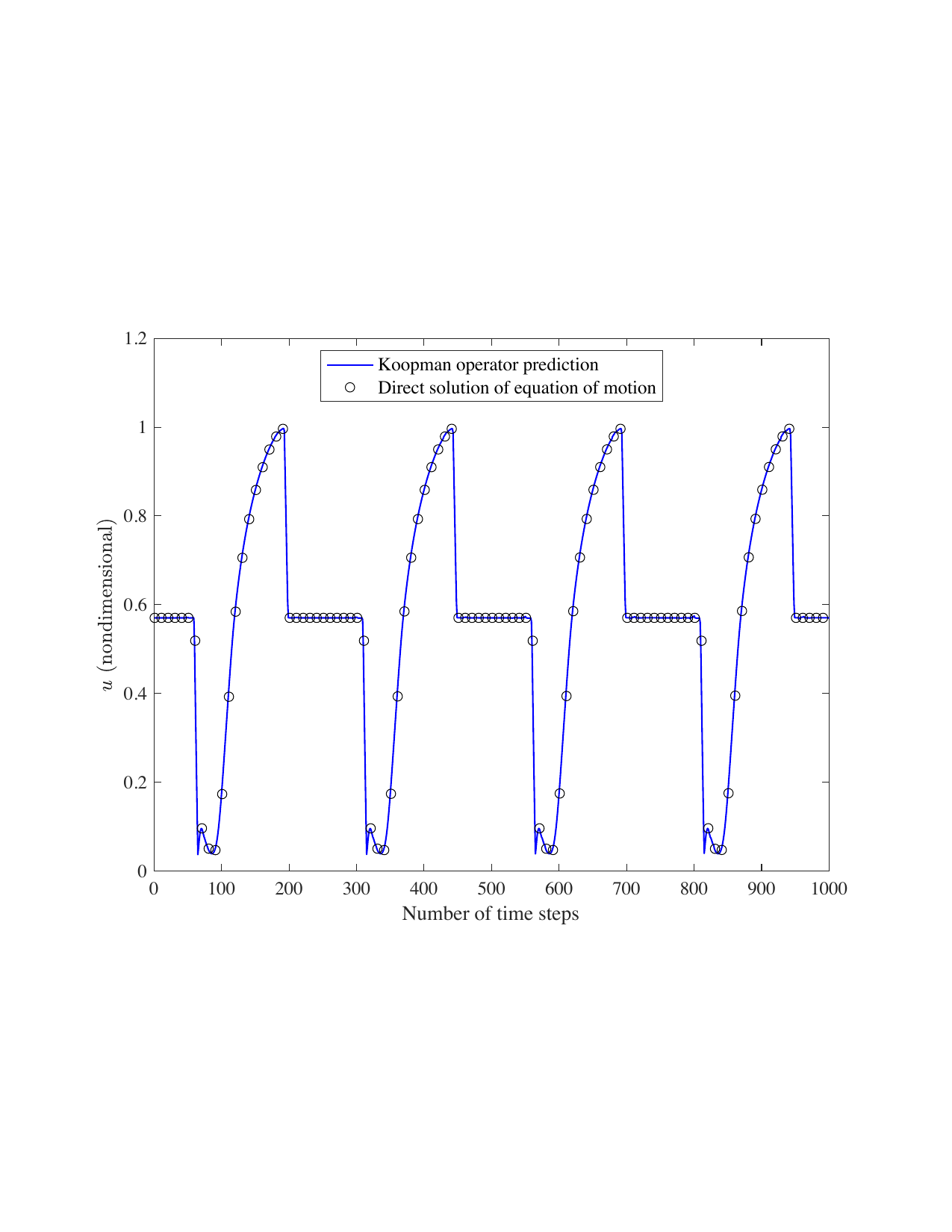}}
\subfigure(d) {\includegraphics[width=0.48\textwidth]{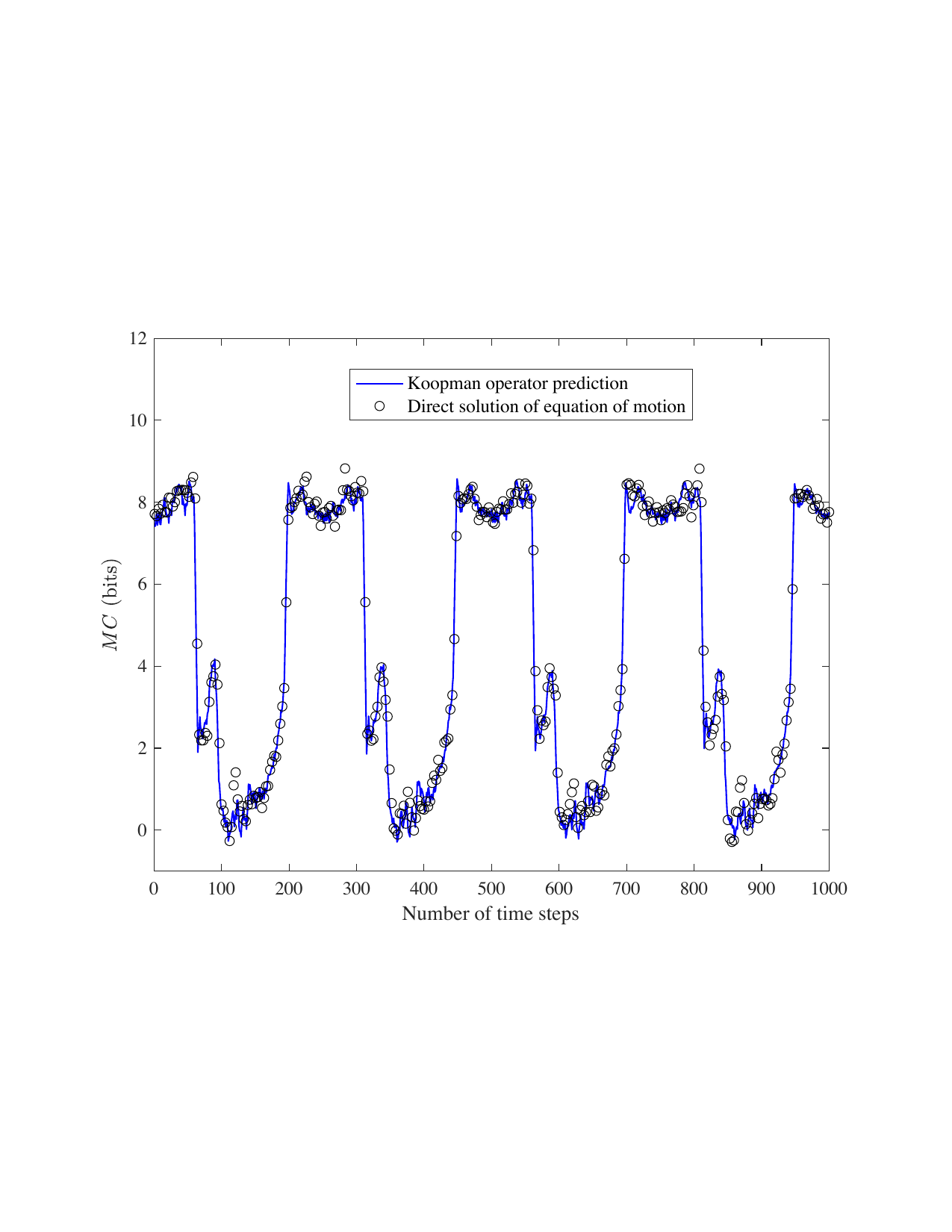}}
%\subfigure(b) { \includegraphics[width=0.3\textwidth]{X1_X0_TX0_0_0_Cr2.pdf}}
\caption{Koopman operator predictions vs direct solutions to equations of motion for the DC motor actuator}
\label{koop_dc}       % Give a unique label
\end{figure*}

\begin{figure*}
\subfigure(a) {\includegraphics[width=0.48\textwidth]{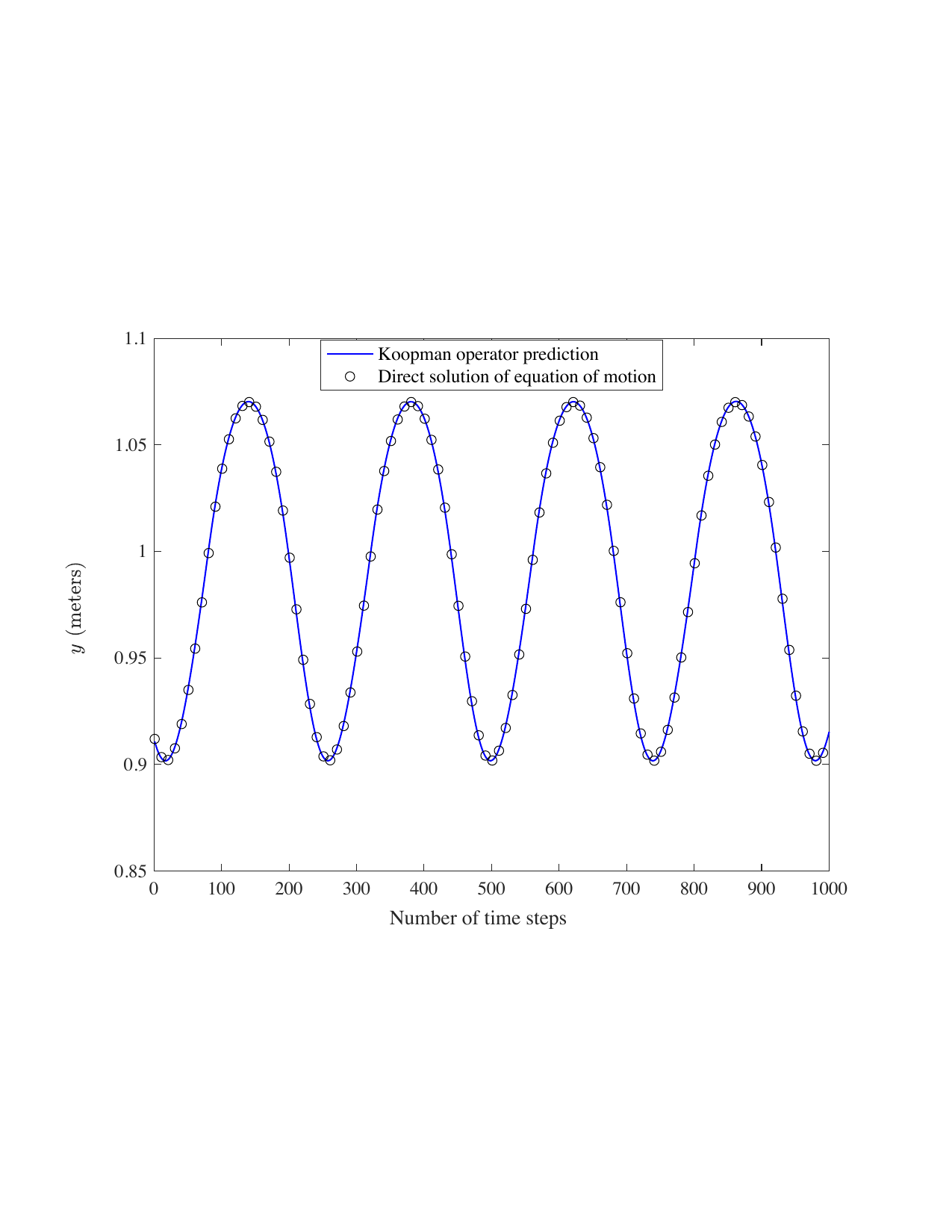}}
\subfigure(b) {\includegraphics[width=0.48\textwidth]{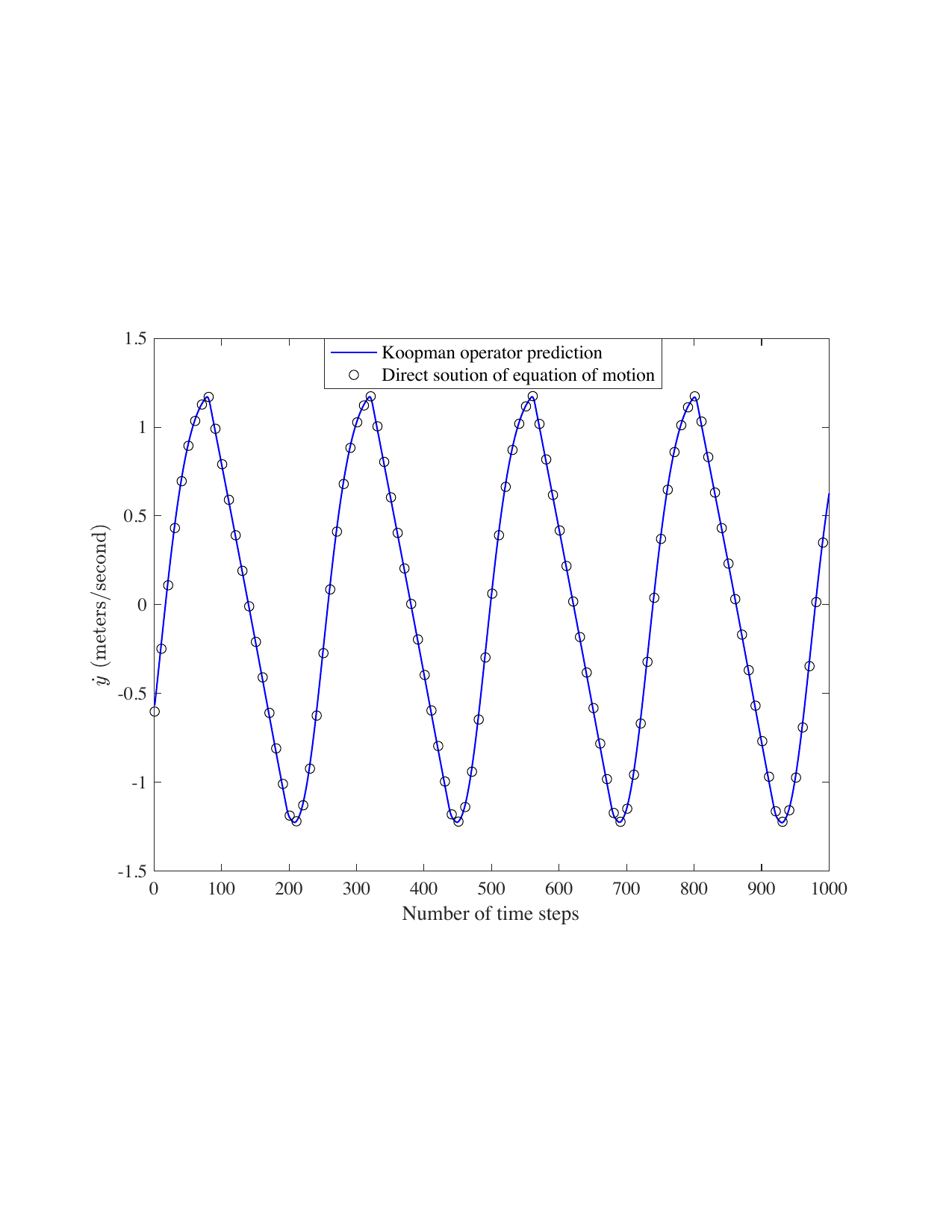}}
\subfigure(c) {\includegraphics[width=0.48\textwidth]{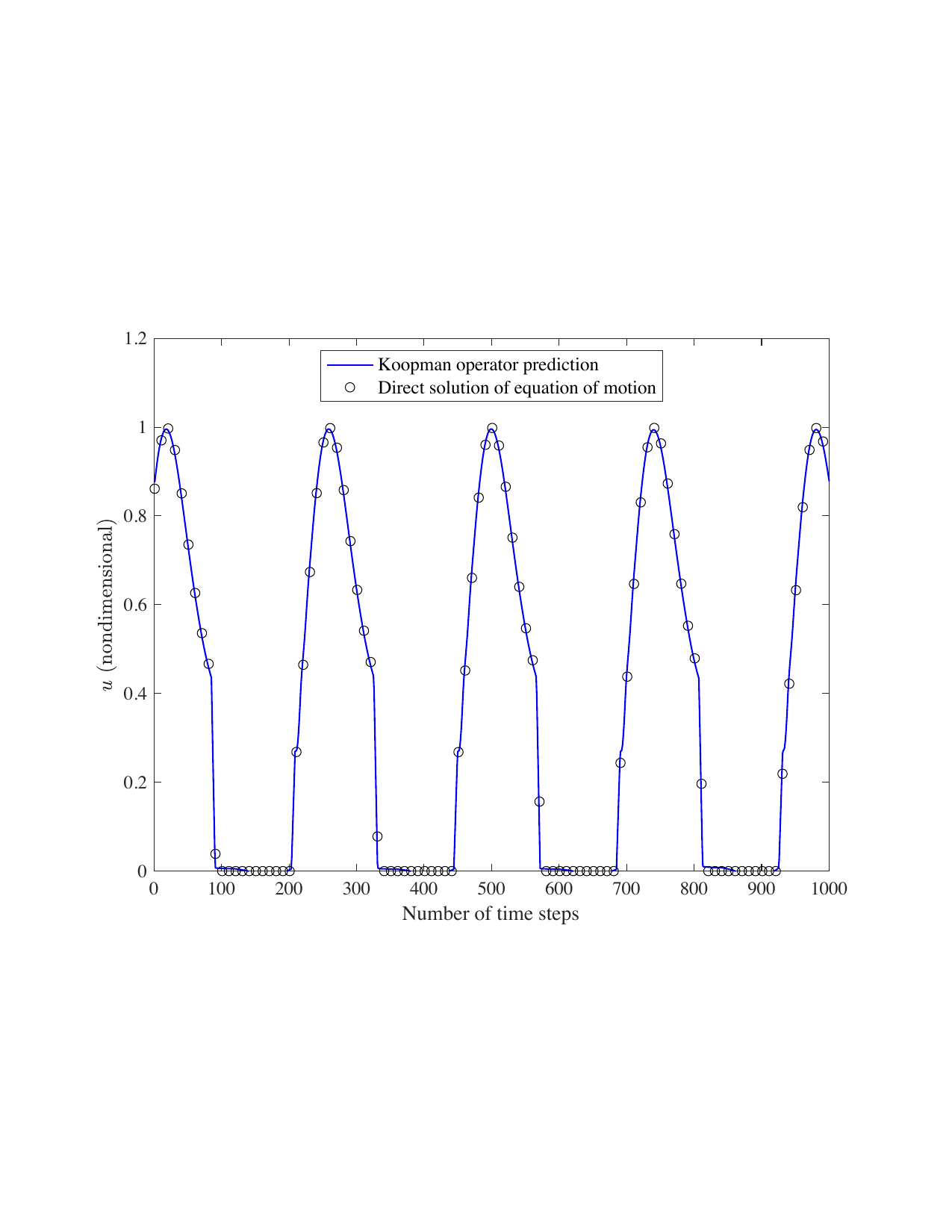}}
\subfigure(d) {\includegraphics[width=0.48\textwidth]{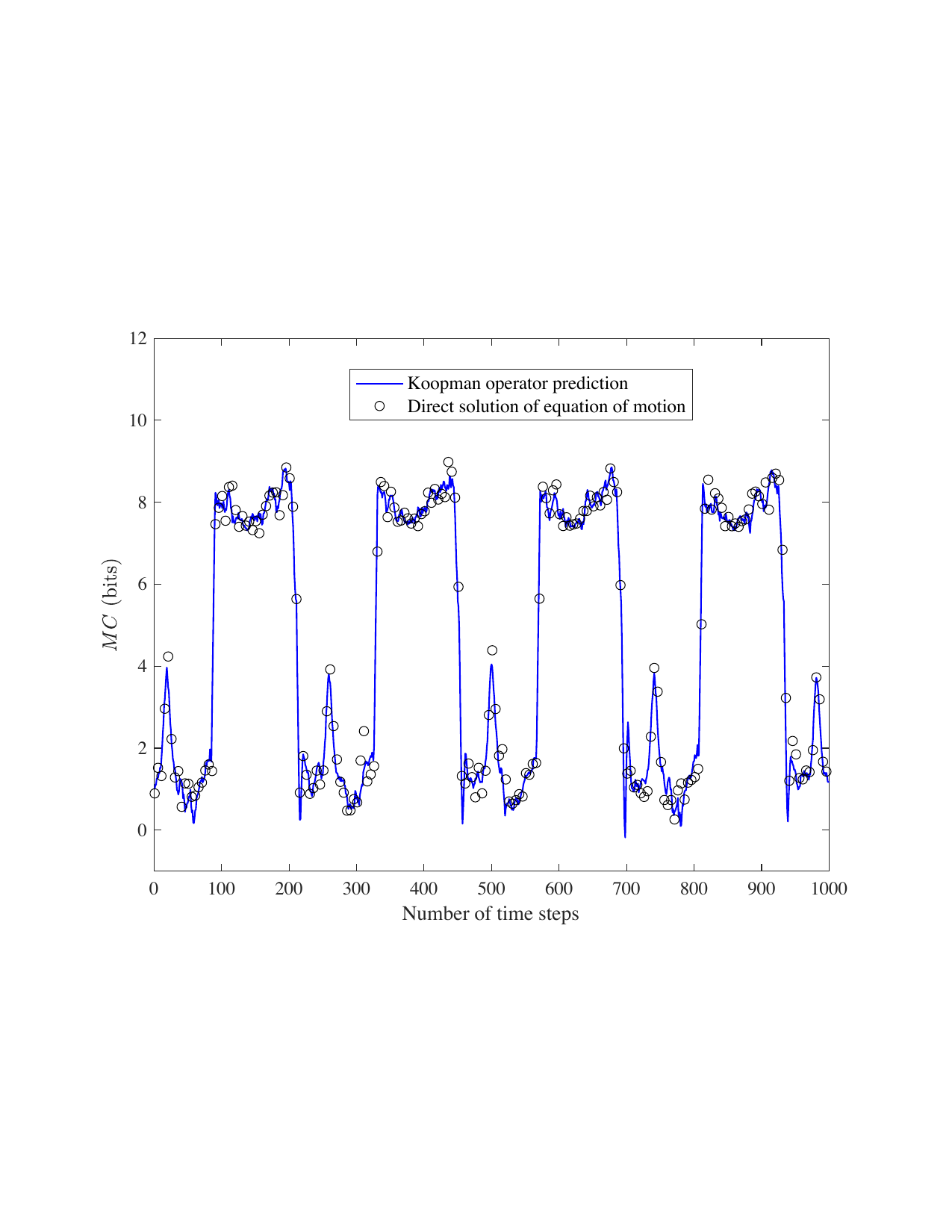}}
%\subfigure(b) { \includegraphics[width=0.3\textwidth]{X1_X0_TX0_0_0_Cr2.pdf}}
\caption{Koopman operator predictions vs direct solutions to equations of motion for the linearized muscle actuator}
\label{koop_lm}       % Give a unique label
\end{figure*}

% For tables use
\begin{table*}
% table caption is above the table
\caption{Deviations from conjugacy and conjugacy residuals; NLM - nonlinear muscle, LM - linearized muscle, DC - DC motor.}
\label{tab_dr}       % Give a unique label
% For LaTeX tables use
\centering
\begin{tabular}{lllllllll}
\hline\noalign{\smallskip}
Systems & $d_{min}$ & $d_{avg}$ & $d_{max}$ & $r_1(C_{r_1})$ &  $r_2(C_{r_1})$ & $r_1(C_{r_2})$ &  $r_2(C_{r_2})$\\
\noalign{\smallskip}\hline\noalign{\smallskip}
NLM/LM & 0.91 & 1.06 & 1.20 & 0.91 & 0.06 & 1.20 & 0.01 \\
%NLM/LM & 0.91 & 1.12 & 1.31 & 0.91 & 0.06 & 1.31 & 0.01 \\
NLM/DC & 0.29 & 0.58 & 0.86 & 0.29 &  0.05 & 0.86 & 0.01\\
%NLM/DC & 0.29 & 0.73 & 1.03 & 0.29 &  0.05 & 1.03 & 0.01\\
LM/DC & 0.94 & 1.12 & 1.32 & 0.94 &  0.06 & 1.32 & 0.02\\
%LM/DC & 0.94 & 1.22 & 1.46 & 0.94 &  0.06 & 1.46 & 0.02\\
\noalign{\smallskip}\hline
\end{tabular}
\end{table*}

The deviation from conjugacy pseudometrics and associated conjugacy residuals are provided in Table \ref{tab_dr}.  The pseudometrics are calculated using normalized residuals from Eqs. \ref{r1scaled} and \ref{r2scaled}, with the nonlinear muscle actuator serving as the reference system. The deviations from conjugacy quantify the intuitive notion that all of the dynamical systems are quite different.  However,  the finding that the DC motor attempt to approximate biological muscle is closer than the linearized muscle approximation for all pseudometrics is interesting and perhaps not intuitive.  Furthermore, it is interesting to note that the linearized muscle and DC motor systems are very different from each other with $d_{avg}=1.23$ even though their time-averaged morphological computation values are relatively close; 4.76 bits for the linearized muscle and 4.52 bits for the DC motor compared to 7.30 bits for the nonlinear muscle. As a result, one may conclude based on mean values of morphological computation that the linearized muscle and the DC motor actuator are relatively similar approximations of nonlinear muscle. However a comparison based on the dynamically consistent pseudometrics capture differences due to \emph{dynamics} of morphological computation that may be lost when comparing systems based on time-averaged morphological computation.  

\begin{figure*}
\subfigure(a) {\includegraphics[width=0.48\textwidth]{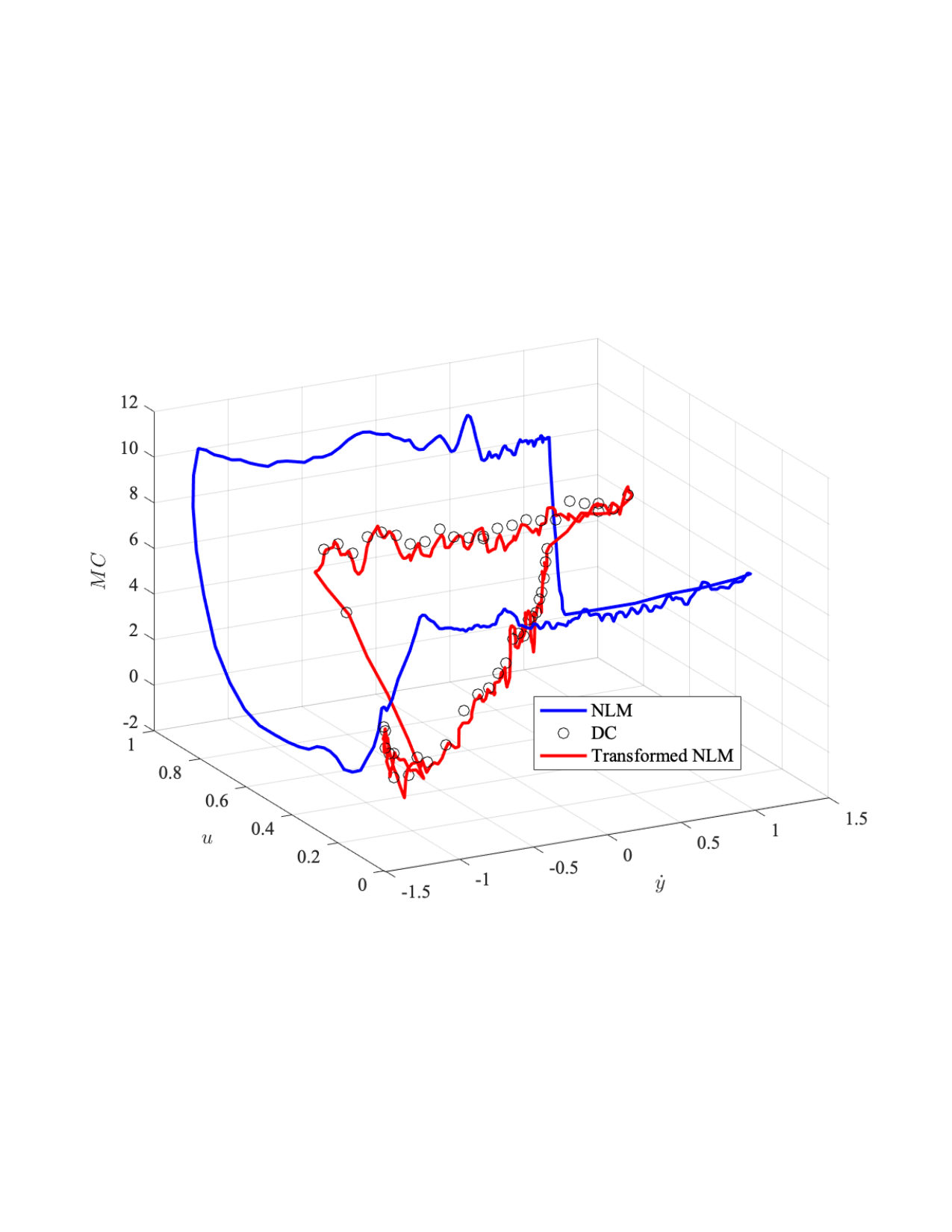}}
\subfigure(b) {\includegraphics[width=0.48\textwidth]{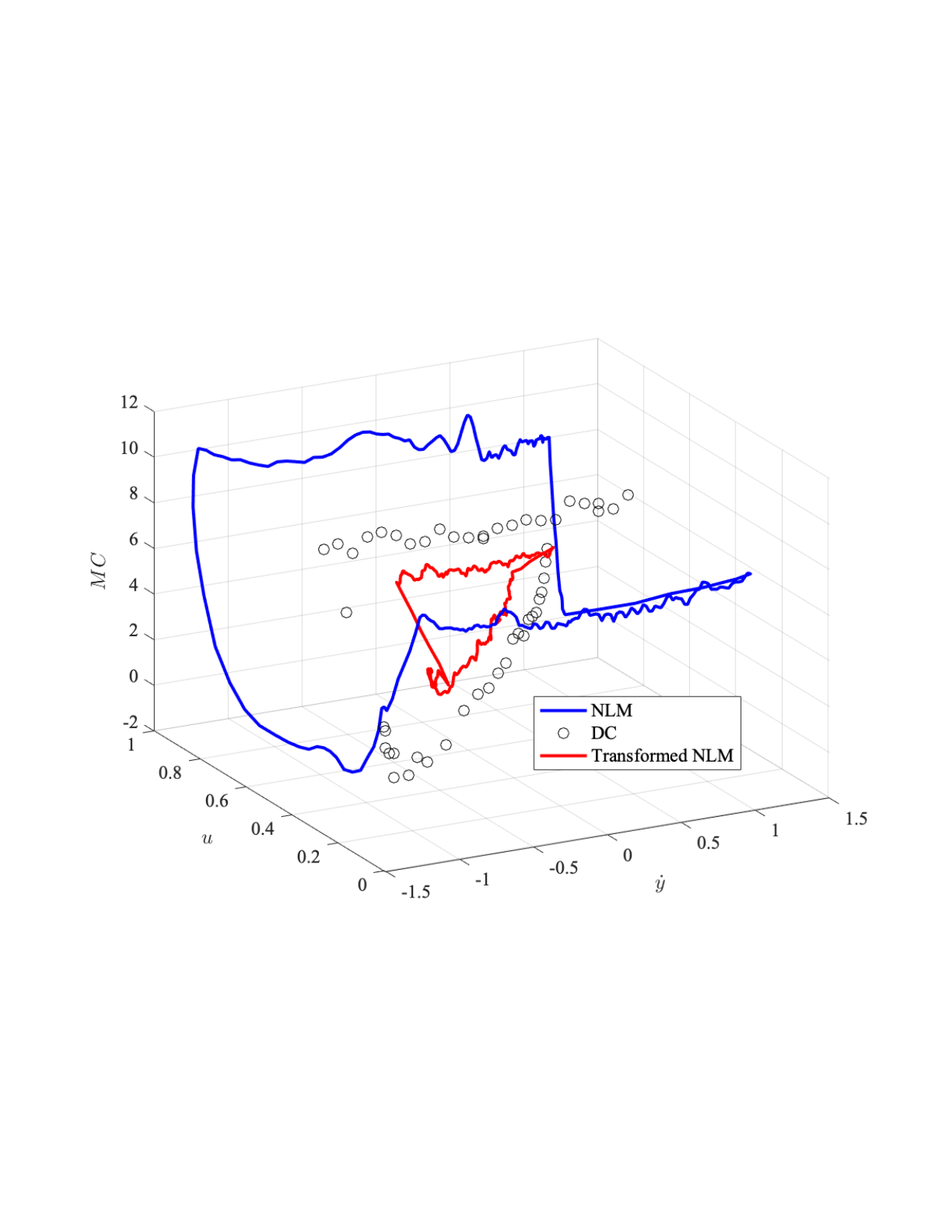}}
%\subfigure(b) { \includegraphics[width=0.3\textwidth]{X1_X0_TX0_0_0_Cr2.pdf}}
\caption{Koopman observable trajectories for components of $\Psi$ corresponding to $\dot{y}$, $u$, and $MC$ for the nonllinear muscle actuator (NLM) and DC motor actuator (DC); (a) $T_C = (\Omega W_g)^{-1}C_{r_{1}} W_f$, (b) $T_C = (\Omega W_g)^{-1}C_{r_{2}} W_f$. }
\label{NLMtoDC}       % Give a unique label
\end{figure*}

\begin{figure*}
\subfigure(a) {\includegraphics[width=0.48\textwidth]{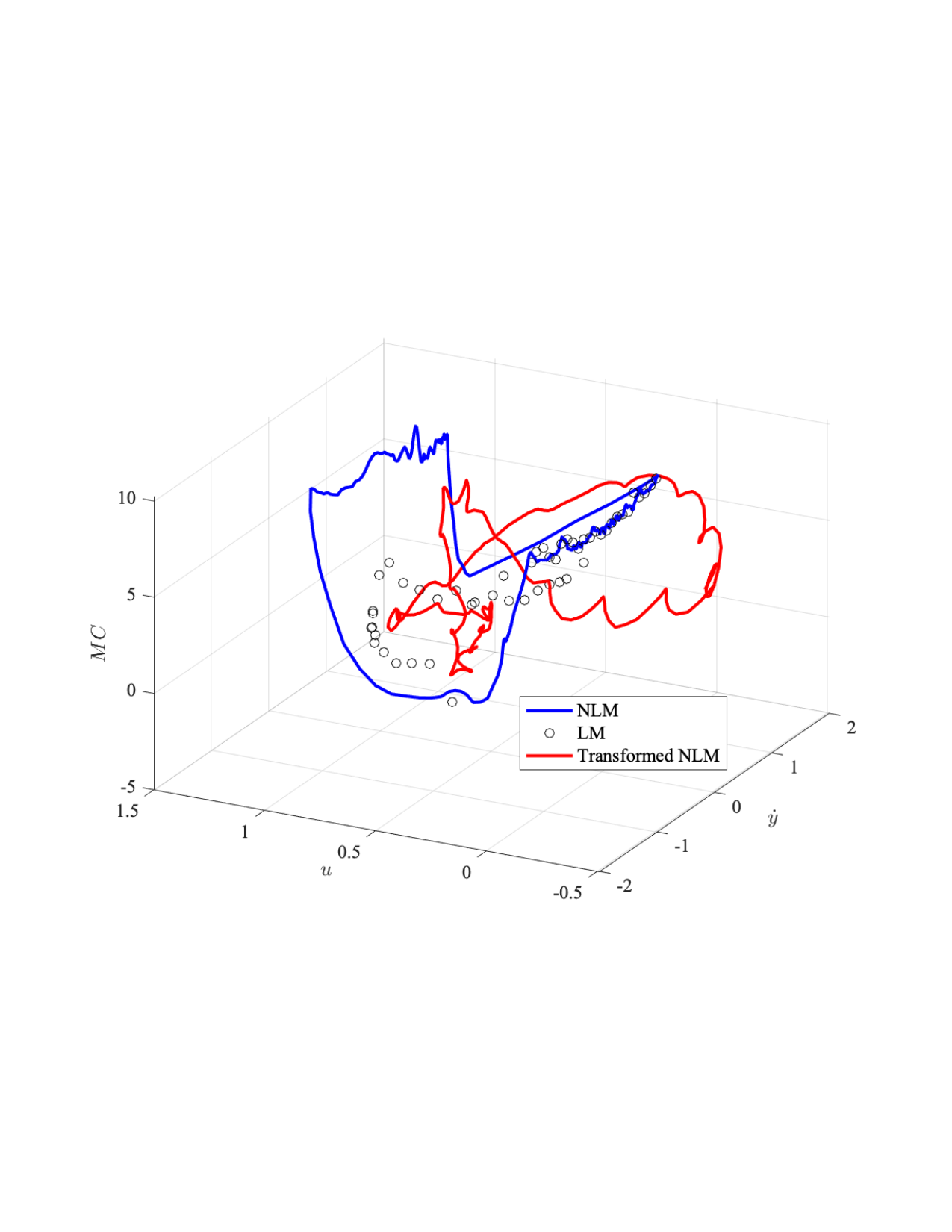}}
\subfigure(b) {\includegraphics[width=0.48\textwidth]{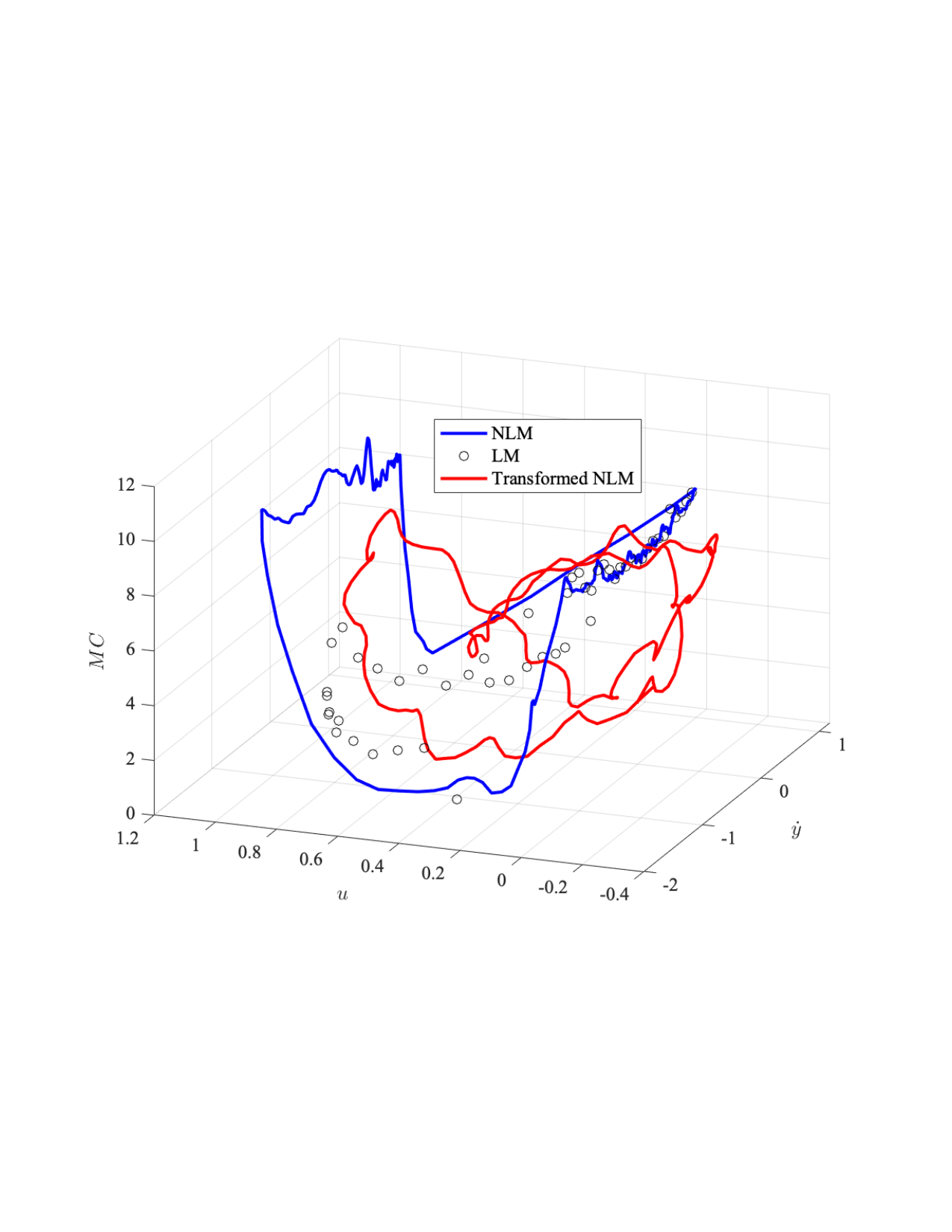}}
%\subfigure(b) { \includegraphics[width=0.3\textwidth]{X1_X0_TX0_0_0_Cr2.pdf}}
\caption{Koopman observable trajectories for components of $\Psi$ corresponding to $\dot{y}$, $u$, and $MC$ for the nonllinear muscle actuator (NLM) and linearized muscle actuator (LM) ; (a) $T_C = (\Omega W_g)^{-1}C_{r_{1}} W_f$, (b) $T_C = (\Omega W_g)^{-1}C_{r_{2}} W_f$. }
\label{NLMtoLM}       % Give a unique label
\end{figure*}

Table \ref{tab_dr} shows the tradeoffs between minimizing $r_1$ versus $r_2$ for these systems.  Figures \ref{NLMtoDC} and  \ref{NLMtoLM} provide visual insight to theses numerical values. The DC motor system is moderatley dissimilar to the nonlinear muscle system when considering trajectory geometry as indicated by $r_1(C_{r_1})=0.29$. Optimizing for $r_2$ leads to relatively modest improvements from $r_2(C_{r_1})=0.05$ to $r_2(C_{r_2})=0.01$. However, this comes at the cost of increasing $r_1$ to 0.86. This is illustrated in Fig. \ref{NLMtoDC} where the transformation based on optimizing for $r_1$ shows that the transformed nonlinear muscle system resembles the $\Psi_{\mathrm{primary}}$ geometry of the DC motor system.  Interestingly, the transformed solution in Fig. \ref{NLMtoDC}(b) corresponding to minimizing $r_2$ resembles the shape of the DC motor $\Psi_{\mathrm{primary}}$ trajectories, but does not match the scale. For comparison, Fig. \ref{NLMtoLM} shows that the nonlinear muscle system does not resemble the linearized muscle system when transformed, which tracks with the higher values reported in Table \ref{tab_dr}.

\section{Conclusions}
\label{conc}

Computationally efficient solutions for pseudometrics quantifying deviation from topological conjugacy between dynamical systems were presented.  The theoretical justification for computing such pseudometrics in Koopman eigenfunction space rather than observable space was provided. This simiplification, along with Pareto optimality considerations led to the conclusion that optimal transformations for comparing systems could be obtained from the group of unitary matrices.  It was shown that the pseudometrics could be computed in polynomial time. The resulting practical advantages in terms of efficiency, scalability, and theoretical consistency were discussed.

Results demonstrated the effectiveness of the deviation from conjugacy pseudometrics. A simple benchmarking problem was used to show that deviation from conjugacy recover a value of zero at topological conjugacy even when the underlying transformation between systems is non-unitary.  Furthermore, the benefit of considering a Pareto optimal context for deviation from conjugacy when trajectory geometry is important was illustrated by showing that considering spectral operator error alone may lead to conclusions on dissimilarity that are not reflected when comparing trajectories.  In contrast, the deviation from conjugacy pseudometrics accomodate both measures of dissimilarity in a consistent formalism and avoid such pitfalls.  Finally, an engineering example was presented in which biological nonlinear muscle based mechanical actuation systems were compared to DC motor and linearized muscle systems. The pseudometrics were able to quantify dissimilary between the systems based on the dynamics of morphological computation (a Koopman observable). It was also shown that while the DC motor and linearized muscle have relatively similar time-averaged morphological computation, they do not have similar morphological computation dynamics.

%Deviation from conjugacy is quantified in a Pareto optimal sense that accounts for spectral properties of Koopman operators as well as trajectory geometry. Theoretical justification is provided for computing such pseudometrics in Koopman eigenfunction space rather than observable space.  Furthermore, it is shown deriving the pseudometrics from unitary transformations is sufficient to recover a value of zero if two systems are topologically conjugate. Therefore the pseudometrics for quantifying deviation from conjugacy are based on analytical solutions for unitary transformations in Koopman eigenfunction space. Finally, geometric considerations for the Pareto optimality problem associated with deviation from conjugacy are used to develop pseudometrics that account for all possible solutions given just two Pareto points based on analytical solutions.

\section*{Acknowledgements}
I am thankful to Igor Mezi{\'c} for the useful discussion on this topic.
\bibliographystyle{spmpsci}      % mathematics and physical sciences
\bibliography{metrics_paper_arxiv.bib}   % name your BibTeX data base

% Non-BibTeX users please use
%\begin{thebibliography}{}
%
% and use \bibitem to create references. Consult the Instructions
% for authors for reference list style.
%
%\bibitem{umeyama}
%S. Umeyama, An Eigendecomposition Approach to Weighted Graph Matching Problems, IEEE Transactions on Pattern Analysis and Machine Intelligence, 10, 695--703 (1988)
%\bibitem{hoffman}
%A.J. Hoffman and H.W. Wielandt, The Variation of the Spectrum of a Normal Matrix, Duke Mathematical Journal, 20(1), 37--39 (1953)
%\bibitem{bollt_conj}
%J.D. Skufca and E.M. Bollt, A Concept of Homeomorphic Defectfor Defining Mostly Conjugate Dynamical Systems, Chaos, 18, 1--17 (2008)

%\bibitem{RefJ}
% Format for Journal Reference
%Author, Article title, Journal, Volume, page numbers (year)

% Format for books
%\bibitem{wiggins}
%S. Wiggins, Introduction to Applied Nonlinear Dynamical Systems and Chaos, Springer-Verlag, 1990
%\bibitem{RefB}
%Author, Book title, page numbers. Publisher, place (year)
% etc
%\end{thebibliography}

\end{document}